\tikzset{cross/.style={cross out, draw=blue, minimum size=1*(#1-\pgflinewidth), inner sep=0pt, outer sep=0pt},
cross/.default={6pt}}
\newcommand{\Gammabar}{\ensuremath{{\overline{\Gamma}}}}
\newcommand{\dsphere}{\ensuremath{\mathbb S^{d-1}}}
\def\d{\partial}
\newcommand{\Omegabar}{\ensuremath{{\overline{\Omega}}}}
\newcommand{\vf}{\ensuremath{\varphi}}
\newcommand{\sphere}{\ensuremath{\mathbb S^{d-1}}}
\newcommand{\rel}{r}
\newcommand{\dsp}{\displaystyle}
\newcommand{\eps}{\varepsilon}
\newcommand{\Om}{\Omega}
\newcommand{\mrm}[1]{\mathrm{#1}}
\newcommand{\Cplx}{\mathbb{C}}
\newcommand{\N}{\mathbb{N}}
\newcommand{\R}{\mathbb{R}}
\newcommand{\Z}{\mathbb{Z}}
\newcommand{\mL}{\mrm{L}}
\newcommand{\mH}{\mrm{H}}
\newcommand{\mY}{\mrm{Y}}
\newcommand{\loc}{\mbox{\scriptsize loc}}
\renewcommand{\d}{\partial}
\newtheorem{theorem}{Theorem}[section]
\newtheorem{lemma}[theorem]{Lemma}
\newtheorem{remark}[theorem]{Remark}
\newtheorem{definition}[theorem]{Definition}
\newtheorem{proposition}[theorem]{Proposition}
\begin{document}

~\vspace{-0.3cm}
\begin{center}
{\sc \bf\LARGE Qualitative indicator functions for imaging\\[6pt] crack networks using acoustic waves}
\end{center}

\begin{center}
\textsc{Lorenzo Audibert}$^1$, \textsc{Lucas Chesnel}$^{2}$, \textsc{Houssem Haddar}$^2$, \textsc{Kevish Napal}$^{3}$\\[16pt]
\begin{minipage}{0.92\textwidth}
{\small
$^1$ Department PRISME, EDF R\&D, 6 quai Watier, 78401, Chatou CEDEX, France; \\
$^2$ INRIA/Centre de math\'ematiques appliqu\'ees, \'Ecole Polytechnique, Institut Polytechnique de Paris, Route de Saclay, 91128 Palaiseau, France;\\
$^3$ Department of Civil, Environmental \& Architectural Engineering, University of Colorado Boulder, CO, USA (corresponding author)\\[10pt]
E-mails: \textit{Lorenzo.Audibert@edf.fr},\, \textit{Lucas.Chesnel@inria.fr}, \textit{Houssem.Haddar@inria.fr},\\ \textit{Kevish.Napal@colorado.edu}\\[-14pt]
\begin{center}
(\today)
\end{center}
}
\end{minipage}
\end{center}
\vspace{0.4cm}

\noindent\textbf{Abstract.} We consider the problem of imaging a crack network  embedded in some homogeneous background from measured multi-static far field data generated by acoustic plane waves. We propose two novel approaches that can be seen as extensions of linear sampling-type methods and that provide indicator functions which are sensitive to local cracks densities. The first approach uses multiple frequencies data to compute  spectral signatures associated with artificially embedded localized obstacles. The second approach also exploits the  idea of incorporating an artificial background but uses data for a single frequency. The indicator function is built using a similar concept as  for differential sampling methods: compare the solution of the interior transmission problem for healthy inclusion with the one with embedded cracks. The performance of the methods is tested and discussed on synthetic examples and the numerical results are compared with the ones obtained using the classical factorization method. 
\newline
\noindent\textbf{Key words.} Generalized linear sampling method, cracks, interior transmission problem, artificial background. 

\section{Introduction}

We consider the problem of identifying a set of cracks embedded in some homogeneous background from measured far field data generated by acoustic plane waves. This type of problem has been extensively studied in the literature and we refer to \cite{arc_sca,cakoni2003linear,kress2005hybrid,alessandrini2005determining,ben2005use,johansson2007reconstruction,ivanyshyn2008inverse,zeev2009identification,AGKPS2011,liu2010reconstruction,bonnet2011fast,waveguids,bellis515,crack_imp,GWY015} for an overview of some non-iterative methods that have been employed to solve this problem. We are interested here in cases where the cracks form a relatively dense network that may cause failure of most of these methods (due to important multiple scattering effects). In general, these methods would furnish an indicator function that only reveals a volumetric domain encompassing the cracks network. 
We propose and study two methods that would leverage this limitation and provide indicator functions that are sensitive to local densities of the cracks inside the network. These two methods belong to the family of so-called sampling methods (e.g. \cite{colton1988inverse,kirsch1999factorization}) and make use in particular of the  Generalized Linear Sampling Method (GLSM) \cite{GLSM,CCH}. In the case of inclusions with non empty interior, sampling methods fail if the interrogating frequency coincides with so called Transmission Eigenvalues (TEs) (that coincide with resonant frequencies in the case of non penetrable obstacles). Exploiting the failure of the these methods  at TEs, it has been shown in \cite{CCH_Determination_Dirichlet&TEs,cakoni2007use,cakoni2009computation} that TEs can be computed from far field data, and moreover that the knowledge of these quantities can be used to infer qualitative information on the material \cite{cakoni2008transmission,cakoni2007use,giorgi2012computing,haddar2004interior}. However, recovering sharp information from TEs is not an easy task for penetrable inclusions because they cannot be viewed as the spectrum of a selfadjoint operator. To overcome this difficulty, recent works \cite{Art, cakoni2016stekloff, audibert2017new, cogar2017modified} suggested to rather consider a modified spectrum that we refer to as Relative Transmission Eigenvalues (RTEs) and that can still be computed from far field data. The original proposed idea consists in the introduction of an artificial background that can be chosen by the observer. Then RTEs correspond to wavenumbers such that there exists an incident field for which the far field resulting from the effective background and the one resulting from the artificial background are arbitrarily close. The advantage of using RTEs is that they depend on the artificial background which can be fit as desired, in accordance with the problem under consideration. Hence choosing the appropriate setting for the artificial obstacle, such as the position, the geometry, whether the obstacle is penetrable or not and the corresponding refractive index or boundary conditions, can greatly simplify the link between the RTEs and the parameters of interest. 

In this work we shall adapt  the idea of RTE for crack monitoring perspectives.
First, we point out that cracks have an empty interior and therefore TEs do not exist (according to their definition for volumetric inclusions). However, one can still define RTEs as long as the artificially included background contains inclusions with non empty interior. This is what we would like to exploit here to build indicator functions that are sensitive to the number of cracks inside the artificial inclusion. 
Considering  the latter to be a non-penetrable obstacle $\Omega$ (with some prescribed boundary conditions), the RTEs then coincide with the eigenvalues of the Laplace operator with the boundary conditions on $\partial \Omega$ and the boundary conditions on the cracks intersecting $\Omega$. This indicates that these RTEs are sensitive to the number of cracks contained in $\Omega$. For instance, a monotonicity result can be easily shown for RTEs associated with new born cracks.  As for TEs, we prove that  RTEs can be determined from the multi-frequency data using an adaptation of the GLSM \cite{GLSM}. This leads to a first algorithm where the domain $\Omega$ is swept over the interrogated region and for each position, the computed RTEs  are compared with RTEs associated with a crack free inclusion $\Omega$. The desired resolution of the method is determined by the size of $\Omega$  which is in return limited  by the frequency band of the data (since it should contain at least the first RTE associated with $\Omega$).

A weak point of this first method is the relatively high numerical cost of the computations of RTEs associated to one artificial background. Since increasing the resolution (or equivalently, reducing the size of $\Omega$) requires to increase the number of considered artificial backgrounds, obtaining high resolution images may be prohibitive. To bypass this drawback, we suggest a second method which uses far field data at one fixed wavenumber. This alternative approach mixes the notion of artificial background with ideas from the Differential Linear Sampling Method \cite{DLSM}. Indeed we consider the same $\Omega$ as previously but instead of comparing RTEs, we compare the solutions of the corresponding transmission problems. We prove that a measure of the difference between these solutions can be obtained using the GLSM. This measure is indeed sensitive to the presence of the cracks but we are unable to obtain monotonicity results as for the first indicator function. The numerical  results however indicate that the obtained indicator function is sensitive to the crack numbers inside $\Omega$. Varying  $\Omega$ over the probed domain provide an image that reflects the density of the cracks.  

The numerical implementation of  both algorithms is presented in a simplified 2D setting. The performance of the two methods is tested and discussed on synthetic examples. Moreover, the numerical results are compared with the ones obtained using the Factorization Method (FM). We numerically observe the superiority of the second method for relatively sparse networks. When the crack network becomes  dense, only the first method provides an indicator function that shows variations with respect to local densities of the  cracks.  

The outline of the paper is as follows.  We first introduce in Section 2 the setting of the scalar inverse scattering problem for cracks. We also  outline the Factorization Method and show expected numerical results on a representative targeted configuration. Section 3 is dedicated to the proposed  imaging algorithms based on the notion of relative transmission eigenvalues. After defining these quantities,  we show how they can be determined from measured multi-static and multi-frequency data. We then describe the associated algorithm and give some preliminary numerical validations. We present in Section 4 the method based on ideas from differential linear sampling method that uses muti-static data at a fixed frequency. After analyzing the method, we  provide an extensive list of validating examples and make a comparison between the three inversion algorithms. Some technical results used in the proofs are given in an appendix.

\section{Setting of the problem}

\begin{figure}[!ht]
\centering
\begin{tikzpicture}[scale=1]
\draw[line width=0.1mm][dotted] (0.7,0) circle (0.5);
\draw[line width=1pt] (0.7,0.5) arc (90:199:0.5);
\draw (0.7,0) node{\textcolor{gray!90}{$D$}};
\draw (2.5,-0.3) node{\textcolor{gray!90}{$D$}};
\draw (0,0) node{$\Gamma$};
\begin{scope} [rotate around={-45:(0.7,0)}]
\draw [line width=1pt,->,gray!50] (0.2,0) -- (-0.3,0);
\end{scope}
\draw [line width=1pt,->,gray!50] (2.5,0.2) -- (2.5,0.7);
\draw (0.32,0.6) node{\textcolor{gray!50}{$\nu$}};
\draw (2.7,0.45) node{\textcolor{gray!50}{$\nu$}};
\draw[line width=0.1mm][dotted] (2.5,-0.3) ellipse  (1 and 0.5);
\draw[line width=1pt] (3.2,0.05) arc (45:135:1 and 0.5);

\begin{scope} [rotate around={50:(2.5,0)},xshift=0.5cm,yshift=1.3cm]
\draw[line width=0.1mm][dotted] (2.5,-0.3) ellipse  (1 and 0.5);
\draw[line width=1pt] (3.2,0.05) arc (45:135:1 and 0.5);
\draw (2.5,-0.3) node{\textcolor{gray!90}{$D$}};
\draw (2.7,0.45) node{\textcolor{gray!50}{$\nu$}};
\draw [line width=1pt,->,gray!50] (2.5,0.2) -- (2.5,0.7);
\end{scope}

\end{tikzpicture}
\caption{Example of setting in $\R^2$. \label{FigSetting}} 
\end{figure}
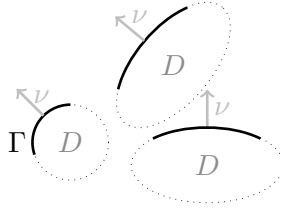

\noindent We are interested in the scattering of waves by a collection of sound hard cracks in $\R^d$, with $d=2$ or $3$. Following \cite{meca}, the cracks coincide with a portion $\Gamma$ of a smooth nonintersecting curve ($d = 2$) or surface ($d = 3$) that encloses a domain $D$ such that its boundary $\partial D$ is
smooth. We assume that $\Gamma$ is an open set with respect to the induced topology
on $\d D$. We denote by $\nu$ the unit normal vector to $\partial D$ pointing to the exterior of $D$ (see Figure \ref{FigSetting}). To define traces and normal derivatives of functions on $\Gamma$, we use the following notation for all $x \in\Gamma$:
\[
 f^\pm(x) = \lim\limits_{h \to 0^+}f(x\pm h\nu(x))\quad\text{ and }\quad \partial_\nu^\pm f(x) = \lim\limits_{h \to 0^+}\nu(x)\cdot\nabla f(x\pm h\nu(x)).
\]
Note that if $f$ is a function which is $\mH^2$ in an open set containing $\Gamma$, we have $\partial_\nu^+ f=\partial_\nu^- f$ on $\Gamma$ and we simply write $\partial_\nu f$. We shall also work with the jump functions
\begin{equation}\label{DefJumps}
[f] := f^+ - f^-  \quad\text{ and }\quad \left[\d_\nu f\right] := \d_\nu^+f - \d_\nu^-f.  
\end{equation}
Let $u_i$ be a smooth function satisfying $\Delta u_i+k^2u_i=0$ in $\R^d$ where $\Delta$ stands for the Laplace operator of $\R^d$ and where $k$ is the wavenumber proportional to the frequency. The scattering of $u_i$ by the cracks is described by the problem 
\begin{equation}\label{PbChampTotalFreeSpaceIntrod}
\begin{array}{|rcll}
\multicolumn{4}{|l}{\mbox{Find }u=u_i+u_s\mbox{ such that}}\\[2pt]
\Delta u+k^2 u & = & 0 & \mbox{ in }\R^d\setminus\overline{\Gamma}\\[5pt]
\partial^{\pm}_{\nu} u &=& 0 &\text{ on } \Gamma\\[5pt]
\multicolumn{4}{|c}{\dsp{\dsp\lim_{R\to +\infty}\int_{|x|=R}\left|\frac{\partial u_s}{\partial r}-ik u_s \right|^2 ds(x)  = 0.}}
\end{array}
\end{equation}
The last line of (\ref{PbChampTotalFreeSpaceIntrod}), in which $r=|x|$, is the so-called Sommerfeld radiation condition. For all $k^2>0$, Problem (\ref{PbChampTotalFreeSpaceIntrod}) has a unique solution $u$ belonging to $\mH^1_{\loc}(\R^d\setminus\overline{\Gamma})$. In this document, $\mH^1_{\loc}(\R^d\setminus\overline{\Gamma})$ denotes the set of functions which are in $\mH^1(\mathcal{O}\setminus\overline{\Gamma})$ for all bounded domain $\mathcal{O}\subset\R^d$ \cite{Colton-Kress}. Far from the cracks, the scattered field $u_s$ behaves at first order like a cylindrical wave (resp. spherical wave) when $d=2$ (resp. $d=3$) and we have the expansion (see e.g. \cite{Colton-Kress})
\begin{equation}\label{scatteredFieldFreeSpaceIntro}
u_{s}(r\theta_{s})=\dsp \eta_d\,e^{i k r}r^{-\frac{d-1}{2}}\,\Big(\,u_{s}^{\infty}(\theta_s)+O(1/r)\,\Big),
\end{equation}
as $r=|x|\to+\infty$, uniformly in $\theta_{s}\in \mathbb{S}^{d-1}:=\{\theta\in\R^d;\;|\theta|=1\}$. In \eqref{scatteredFieldFreeSpaceIntro} the constant $\eta_d$ is given by $\eta_d=e^{i \frac{\pi}{4}}/\sqrt{8\pi k}$ for $d=2$ and by $\eta_d=1/(4\pi)$ for $d=3$. The function $u_s^{\infty}: \mathbb{S}^{d-1}\to\Cplx$ is called the farfield pattern of $u_s$ associated with $u_{i}$. When $u_i$ coincides with the incident plane wave $u_{i}(\cdot,\theta_i):=e^{i k \theta_{i}\cdot x}$, of direction of propagation $\theta_{i}\in\mathbb{S}^{d-1}$, we denote respectively $u_s(\cdot,\theta_i)$, $u^{\infty}_s(\cdot,\theta_i)$ the corresponding scattered field and far field pattern.  
From the farfield pattern associated with one incident plane wave, by linearity we can define the farfield operator $F:\mL^2(\mathbb{S}^{d-1})\to\mL^2(\mathbb{S}^{d-1})$ such that 
\begin{equation}\label{defInitialFFoperator}
(Fg)(\theta_{s})=\int_{\mathbb{S}^{d-1}}g(\theta_{i})\,u_s^{\infty}(\theta_{s},\theta_{i})\,ds(\theta_{i}).
\end{equation}
The function $Fg$ corresponds to the farfield pattern of $u_s$ defined in (\ref{PbChampTotalFreeSpaceIntrod}) with 
\begin{equation}\label{HerglotzWave}
u_i=v_g:=\int_{\mathbb{S}^{d-1}}g(\theta_{i})e^{ik\theta_{i}\cdot x}\,ds(\theta_{i}).
\end{equation}
We call Herglotz wave functions the incident fields of the above form. Our goal is to develop techniques to identify the cracks $\Gamma$ from the knowledge of $F:\mL^2(\mathbb{S}^{d-1})\to \mL^2(\mathbb{S}^{d-1})$.\\
\newline
As mentioned in the introduction, sampling methods for crack monitoring purposes, such as the LSM or the FM, can already be found in the literature. However, these techniques turn out to be inefficient when it comes to image highly damaged backgrounds. We illustrate this point on numerical simulations where we implement the FM to recover sound-hard cracks coinciding with a set $\Gamma$. 
{The method relies on the solving of the problem
\begin{equation}\label{eq:FFeqCrack}
(F_\sharp)^{1/2}(g_L)(\hat{x})=\Phi_L^\infty(\hat{x})\quad\forall \hat{x}\in\dsphere.
\end{equation}
In equation  \eqref{eq:FFeqCrack}, the opertor $F_\sharp$ is defined by 
\begin{equation}\label{DefFsharp}
F_\sharp := |\Re e\,F|+\Im m\,F,
\end{equation}
with $\Re e\,F := (F+F^*)/2$ and $\Im m \,F := (F-F^*)/(2i)$. The right hand side  is defined by 
\[
\Phi_L^\infty(\hat{x}) = \int_L(-ik\hat{x}.\nu(y)\alpha(y) + \beta(y))e^{-ik\hat{x}.y}ds(y),
\]
where $\alpha$ and $\beta$ are given density functions. A chosen non intersecting curve $L$ is then part of the crack if and only if $\Phi_L^\infty\in R(F_\sharp^{1/2})$.} For more details on this method, we refer the reader to \cite{arc_sca,boukari2013factorization}. Figure \ref{fig:FM} illustrates the performance of the FM in a particular setting (we work with simulated data). We observe that the isolated cracks are relatively well recovered. However regions which concentrate too many cracks are poorly reconstructed. The use of higher wavenumber data should improve the results, as we try to point out in Figure \ref{fig:FMHRes}. But in practice one might not have a priori information on the damage level of the probed background nor access to high wavenumber data. In this work, we address this problematic by introducing new types of indicators which instead of giving the exact shape of cracks, focus on quantifying crack density.

\begin{figure}[!ht]
\centering
\includegraphics[height=5cm]{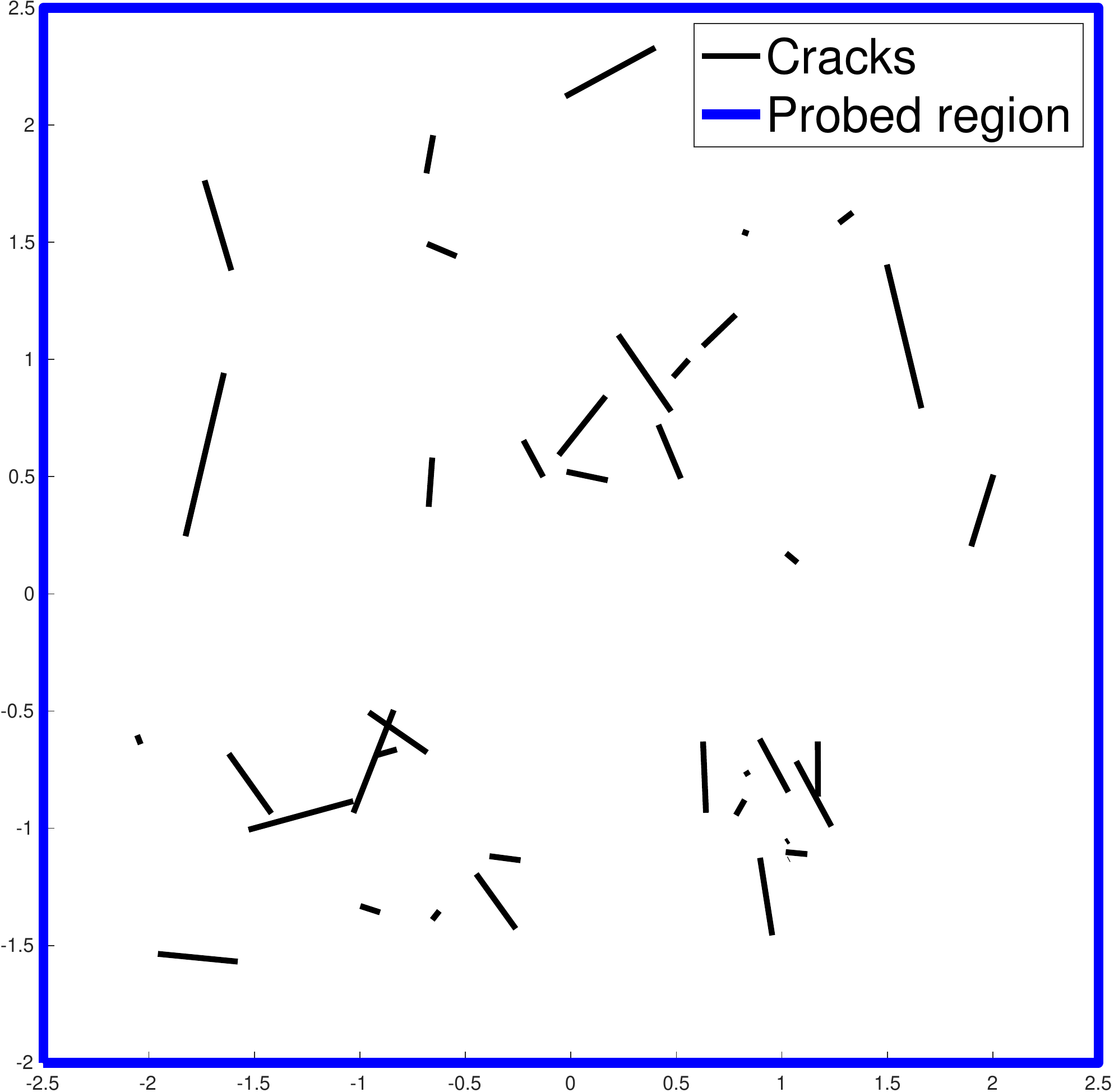}\qquad\qquad
\includegraphics[height=5cm]{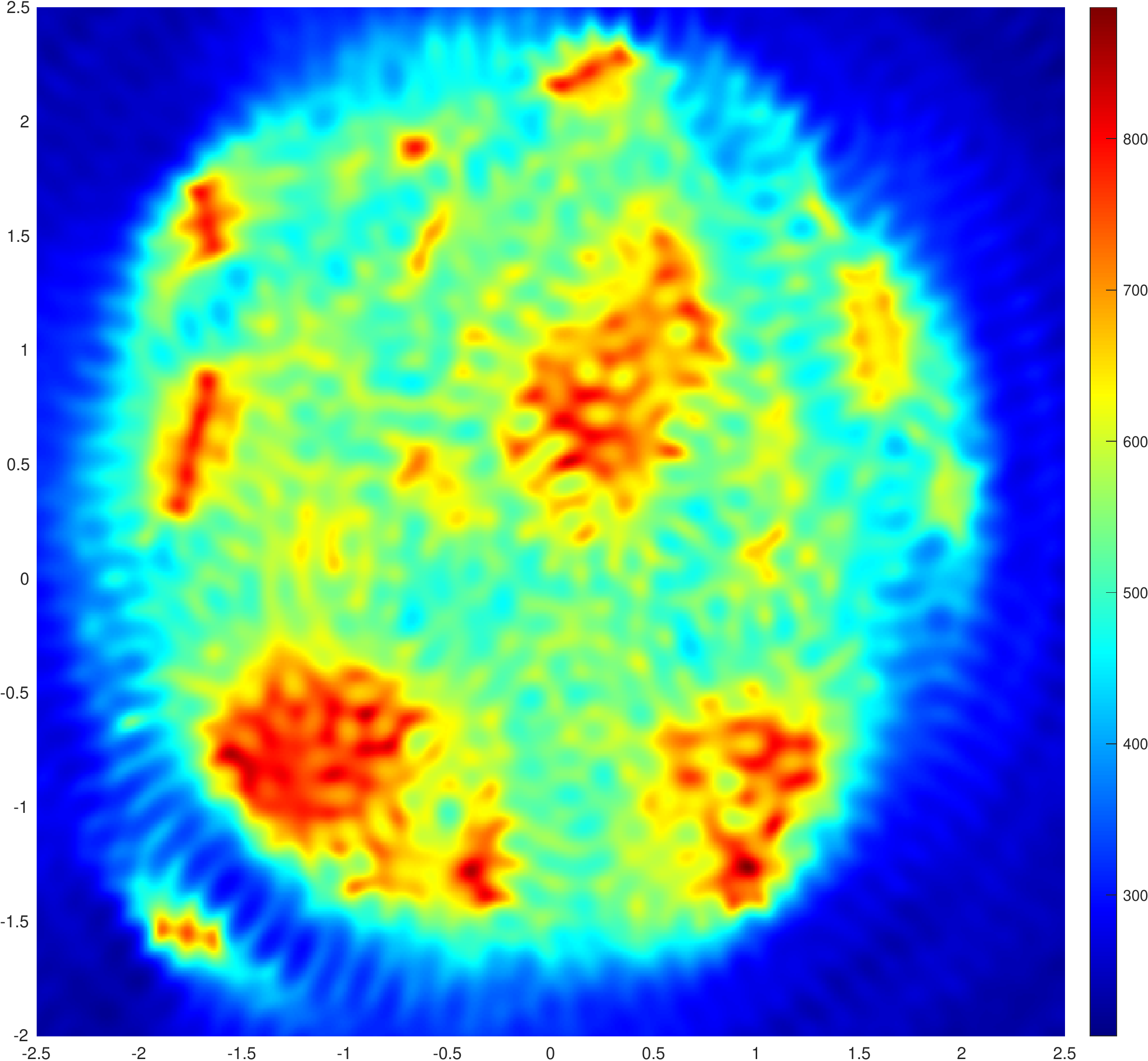}
\caption{Imaging of a simulated damaged background with the FM indicator. The reconstruction uses far field data generated at the wavelength $\lambda:=2\pi/k=0.3$. The resolution of the image is set by the sampling points step $dz = 0.01$.}
\label{fig:FM}
\end{figure}

\begin{figure}[!ht]
\centering
\includegraphics[height=5cm]{Images/CrackDistribution.pdf}\qquad\qquad
\includegraphics[height=5cm]{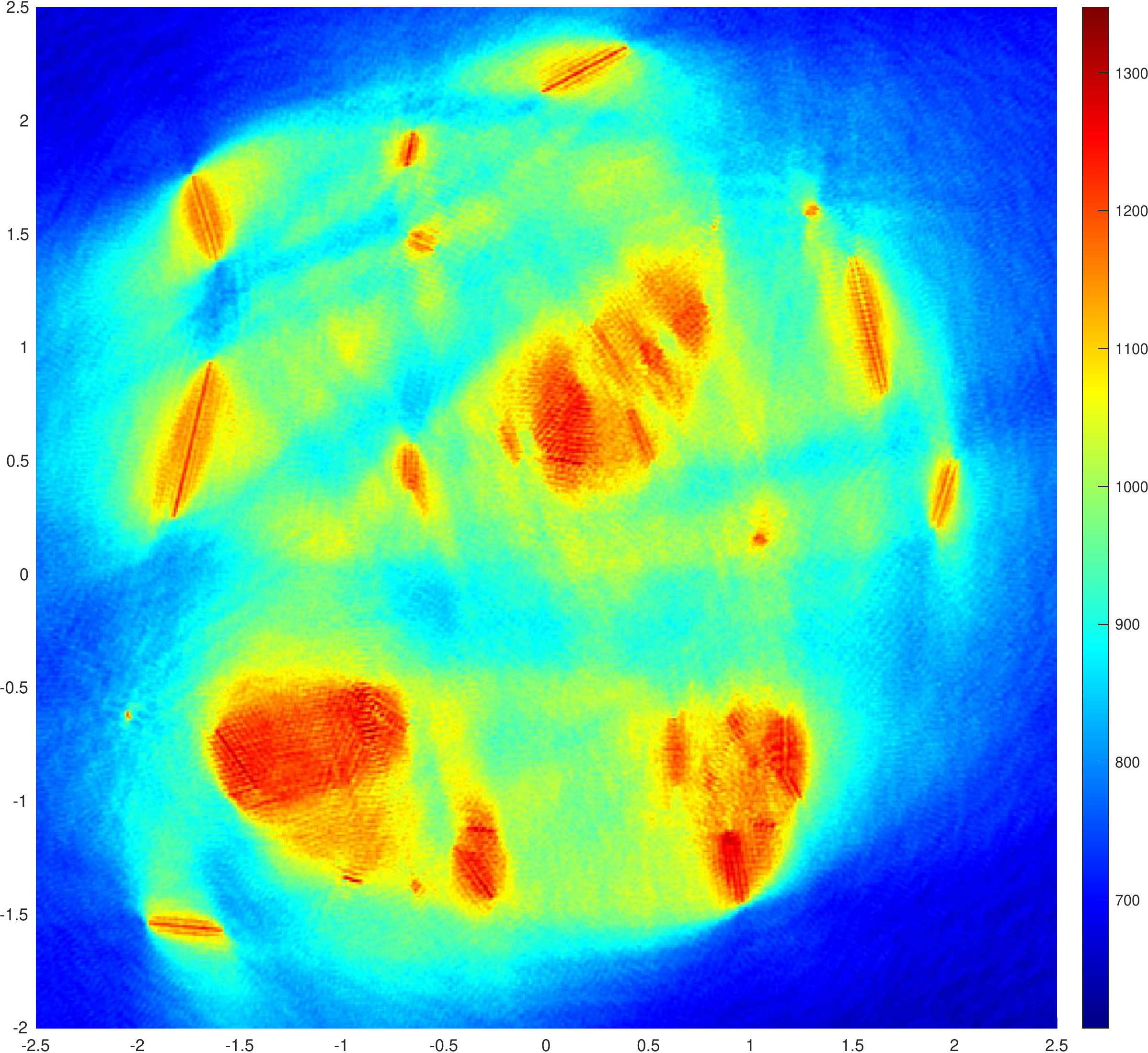}
\caption{Imaging of a simulated damaged background with the FM indicator. The reconstruction uses far field data generated at the wavelength $\lambda=0.05$. The resolution of the image is set by the sampling points step $dz = 0.01$.}
\label{fig:FMHRes}
\end{figure}

\noindent To proceed, we propose two methods relying on the notion of artificial backgrounds. The first one uses the concept of Relative Transmission Eigenvalues which has been introduced in \cite{Art,AuCH19}. It will be presented in the next section and will allow us to quantify small crack aggregates. This technique requires data for a range of frequencies and its implementation is quite expensive. To overcome this drawback, we will present in Section \ref{SectionSecondApproach} an alternative approach inspired by the Differential Linear Sampling Method of \cite{DLSM} which requires only data at one fixed wavenumber and fewer computations.\\
\newline
In our analysis, we will use several times a result on the factorization of the operator $F_\sharp$ defined in (\ref{DefFsharp}) that we recall here (which can be easily deduced from the analysis in \cite{arc_sca} and the abstract theory for the factorization method as in \cite{kirsch2008factorization,CCH}). First, define the Herglotz operator $\mathcal{H}_\Gamma : \mL^2(\dsphere) \to \mH^{-1/2}(\Gamma)$ such that 
\begin{equation}\label{DefHGamma}
\mathcal{H}_\Gamma g=  \partial_\nu  v_g|_{\Gamma},
\end{equation}
where $v_g$ appears in (\ref{HerglotzWave}).
\begin{proposition}\label{PropositionFactoTGamma}
Assume that $k^2$ is such that $\mathcal{H}_\Gamma : \mL^2(\dsphere) \to \mH^{-1/2}(\Gamma)$ is injective. Then we have the factorization 
\begin{equation}\label{FactoF}
F_\sharp=\mathcal{H}_\Gamma^{\ast}\,\mathcal{T}_\Gamma\,\mathcal{H}_\Gamma,
\end{equation}
where the bounded operator $\mathcal{T}_\Gamma:\mH^{-1/2}(\Gamma)\to \mH^{1/2}(\Gamma)$ is such that there is $c>0$ such that
\[
\forall\varphi\in\mH^{-1/2}(\Gamma),\qquad \langle \varphi,\mathcal{T}_\Gamma \varphi\rangle_{\Gamma} \ge c\,\|\varphi\|^2_{\mH^{-1/2}(\Gamma)}.
\]
Here $\langle\cdot,\cdot\rangle_{\Gamma}$ denotes the duality pairing in $\mH^{-1/2}(\Gamma)\times\mH^{1/2}(\Gamma)$. 
\end{proposition}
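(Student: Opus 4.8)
The plan is to reduce the statement to the abstract factorization theorem for $F_\sharp$ (see \cite{kirsch2008factorization,CCH}): that theorem guarantees the existence of a symmetric factorization $F_\sharp = \mathcal{H}_\Gamma^{\ast}\,\mathcal{T}_\Gamma\,\mathcal{H}_\Gamma$ with $\mathcal{T}_\Gamma$ coercive as soon as one exhibits a \emph{middle operator} factorization $F = \mathcal{H}_\Gamma^{\ast}\,\mathcal{G}_\Gamma\,\mathcal{H}_\Gamma$, with $\mathcal{G}_\Gamma:\mH^{-1/2}(\Gamma)\to\mH^{1/2}(\Gamma)$ such that $\Re e\,\mathcal{G}_\Gamma$ is coercive up to a compact perturbation and $\Im m\,\mathcal{G}_\Gamma$ is sign-definite, with $\mathcal{H}_\Gamma$ injective. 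I would therefore spend the proof producing this factorization of $F$ and verifying the two properties of $\mathcal{G}_\Gamma$; the abstract step is then a citation.

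First I would establish the data-to-pattern factorization. As the cracks are sound hard, the scattered field $u_s$ for an incident field $u_i$ solves $\Delta u_s+k^2u_s=0$ in $\R^d\setminus\Gammabar$ with $\partial_\nu^\pm u_s=-\partial_\nu u_i$ on $\Gamma$ and the radiation condition; in particular $[\partial_\nu u_s]=0$, so $u_s$ is the double layer potential of the density $\varphi:=[u_s]\in\mH^{1/2}(\Gamma)$. Reading off its far field gives $u_s^\infty=\mathcal{A}\varphi$, where $(\mathcal{A}\varphi)(\hat x)=\int_\Gamma \varphi(y)\,(-ik\,\hat x\cdot\nu(y))\,e^{-ik\hat x\cdot y}\,ds(y)$. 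Computing the adjoint of $\mathcal{H}_\Gamma$ from $\partial_\nu v_g(x)=\int_{\dsphere}g(\theta)\,ik\,\theta\cdot\nu(x)\,e^{ik\theta\cdot x}\,ds(\theta)$ shows that $\mathcal{H}_\Gamma^{\ast}=\mathcal{A}$, so that $u_s^\infty=\mathcal{H}_\Gamma^{\ast}\varphi$. It remains to link $\varphi$ to the datum $f:=\partial_\nu u_i|_\Gamma=\mathcal{H}_\Gamma g$: since $\partial_\nu^\pm u_s=N\varphi$ with $N$ the hypersingular operator, the boundary condition yields $\varphi=-N^{-1}f$, hence $F=-\mathcal{H}_\Gamma^{\ast}N^{-1}\mathcal{H}_\Gamma$, i.e. $\mathcal{G}_\Gamma=-N^{-1}$. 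Equivalently, $\mathcal{G}_\Gamma$ is the solution operator $f\mapsto[u_s]$ of the forward crack problem with datum $-f$, which is well defined for every $k^2>0$ by the well-posedness recalled above.

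Next I would verify the two properties of $\mathcal{G}_\Gamma$. For the imaginary part, I would apply Green's identity to $u_s$ in $B_R\setminus\Gammabar$ and let $R\to+\infty$: the term on $\partial B_R$ converges, by the radiation condition, to a positive multiple of $\|u_s^\infty\|^2_{\mL^2(\dsphere)}$, while the contribution of the two faces of $\Gamma$ reduces to $\langle f,\varphi\rangle_{\Gamma}$ because $\partial_\nu u_s$ is continuous across $\Gamma$ and $[u_s]=\varphi$. Taking imaginary parts gives $\Im m\,\langle \varphi,\mathcal{G}_\Gamma\varphi\rangle_{\Gamma}=c_k\,\|u_s^\infty\|^2_{\mL^2(\dsphere)}$ with $c_k$ of fixed sign; moreover, if $u_s^\infty=0$ then Rellich's lemma and unique continuation force $u_s=0$ in the connected set $\R^d\setminus\Gammabar$, hence $\varphi=0$, so $\Im m\,\mathcal{G}_\Gamma$ is in fact strictly sign-definite. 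For the real part, I would split $N=N_0+N_c$ into the Laplace hypersingular operator $N_0$ (self-adjoint and coercive) and the smoothing, hence compact, remainder carried by the $k^2$ term; inverting then gives $\mathcal{G}_\Gamma=-N_0^{-1}+\text{compact}$ with $-N_0^{-1}$ self-adjoint and coercive on $\mH^{-1/2}(\Gamma)$, which is the required G\aa rding inequality.

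With these two properties and the injectivity of $\mathcal{H}_\Gamma$ in hand, the abstract factorization theorem applies and delivers a self-adjoint coercive $\mathcal{T}_\Gamma:\mH^{-1/2}(\Gamma)\to\mH^{1/2}(\Gamma)$ with $F_\sharp=\mathcal{H}_\Gamma^{\ast}\mathcal{T}_\Gamma\mathcal{H}_\Gamma$ and the stated coercivity estimate. I expect the main obstacle to lie in the properties of $\mathcal{G}_\Gamma$ rather than in the abstract step: one must track carefully the two-sided traces and the jump of $u_s$ across the crack in the energy identity to fix the sign of $\Im m\,\mathcal{G}_\Gamma$, and must isolate correctly the coercive principal part of the hypersingular operator from its compact remainder. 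The passage from $F=\mathcal{H}_\Gamma^{\ast}\mathcal{G}_\Gamma\mathcal{H}_\Gamma$ to the factorization of $F_\sharp$ is delicate only because $|\Re e\,F|$ is not $\mathcal{H}_\Gamma^{\ast}|\Re e\,\mathcal{G}_\Gamma|\mathcal{H}_\Gamma$; this is exactly the point resolved by the abstract theorem, using that $\Re e\,\mathcal{G}_\Gamma$ has only finitely many negative eigenvalues.
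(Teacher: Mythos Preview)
Your proposal is correct and follows exactly the route the paper indicates: the paper does not give a detailed proof of this proposition but states that it ``can be easily deduced from the analysis in \cite{arc_sca} and the abstract theory for the factorization method as in \cite{kirsch2008factorization,CCH}'', which is precisely what you carry out---factor $F$ through the hypersingular operator on $\Gamma$, check the coercivity-plus-compact and sign-definiteness properties of the middle operator, and invoke the abstract $F_\sharp$ theorem. Your write-up is in fact more explicit than the paper itself on this point.
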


\begin{remark}
By assumption, $\Gamma$ coincides with a portion of the boundary of a smooth domain $D$. When $\partial D$ is analytic, from the results concerning zeros of analytic functions, we infer in particular  that $\mathcal{H}_\Gamma : \mL^2(\dsphere) \to \mH^{-1/2}(\Gamma)$ is injective when $k^2$ is a not a Neumann eigenvalue of the Laplace operator in $D$.  
\end{remark}
 
\section{Quantification of crack density using Relative Transmission Eigenvalues}

In the article \cite{Art}, it was proposed to work with relative far field data obtained by considering the difference between the measured far fields and some computed far fields corresponding to an artificial background. This was done in the context of the determination of the features of a penetrable obstacle via the use of the so-called transmission eigenvalues as spectral signatures. The motivation for introducing these relative far field data was to simplify the associated interior transmission problem in order to facilitate the reconstruction. In our setting, the use of artificial backgrounds is especially interesting because it will allow us to define a transmission problem which usually does not exist for scatterers of empty interior. After defining the relative far field operator, we will derive an associated boundary eigenvalue problem whose spectrum is shown to carry straightforward information on the cracks. Furthermore it will be shown that the spectrum can be computed from collected far field data at multiple frequencies. The possibility to quantify small crack aggregates with this spectrum will be illustrated by some numerical simulations.

\subsection{Relative Transmission Eigenvalues}

Let $\Omega\subset\R^d$ be a given bounded domain with a smooth boundary whose complement $\Om^c$ is connected. This domain, which is chosen by the observer, should be seen as an artificial impenetrable obstacle. To simplify the presentation of the theory, we shall assume the cracks do not meet the boundary of the artificial obstacle. More precisely, we assume that $\overline{\Gamma}\cap\partial\Om=\emptyset$. Note however that the cracks can be inside and/or outside $\Om$. Consider the scattering problem
\begin{equation}\label{PbChampTotalFreeSpaceComp}
\begin{array}{|rcll}
\multicolumn{4}{|l}{\mbox{Find }u_b=u_i+u_{b,s}\mbox{ such that}}\\[2pt]
\Delta u_b+k^2u_b & = & 0 & \mbox{ in }\R^d\setminus\overline{\Om},\\[3pt]
u_b & = & 0 & \mbox{ on }\partial\Om\\[3pt]
\multicolumn{4}{|l}{+\ \mbox{Radiation condition}.}
\end{array}
\end{equation}
Here the radiation condition is the same as in (\ref{PbChampTotalFreeSpaceIntrod}). For all $k^2>0$, Problem (\ref{PbChampTotalFreeSpaceComp}) has a unique solution $u_b$ in $\mH^1_{\loc}(\R^d\setminus\overline{\Om})$ \cite{Colton-Kress}. When $u_i$ coincides with the incident plane wave $u_{i}(\cdot,\theta_i)=e^{i k \theta_{i}\cdot x}$, of direction of propagation $\theta_{i}\in\mathbb{S}^{d-1}$, we denote respectively $u_{b,s}(\cdot,\theta_{i})$, $u^{\infty}_{b,s}(\cdot,\theta_{i})$ the corresponding scattered field and far field pattern (see the expansion in (\ref{scatteredFieldFreeSpaceIntro})). From the farfield pattern, we define by linearity the farfield operator $F^b:\mL^2(\mathbb{S}^{d-1})\to\mL^2(\mathbb{S}^{d-1})$ such that 
\[
(F^bg)(\theta_{s})=\int_{\mathbb{S}^{d-1}}g(\theta_{i})\,u^{\infty}_{b,s}(\theta_{s},\theta_{i})\,ds(\theta_{i}).
\]
Finally we define the relative farfield operator $F^{\rel}:\mL^2(\mathbb{S}^{d-1})\to\mL^2(\mathbb{S}^{d-1})$ as
\begin{equation}\label{def:RelFF}
F^{\rel} :=F-F^{b}.
\end{equation}
In practice, $F$ is obtained from the measurements while $F^{b}$ has to be computed by solving numerically \eqref{PbChampTotalFreeSpaceComp} for a given $\Om$.\\   
\newline
In what follows, we will be interested in the values of $k^2>0$ for which there is a non zero $u_i\in\mL^2_{\loc}(\R^d)$ satisfying $\Delta u_i+k^2u_i=0$ in $\R^d$ and such that the corresponding $u_{s}$, $u_{b,s}$ defined via (\ref{PbChampTotalFreeSpaceIntrod}), (\ref{PbChampTotalFreeSpaceComp}) are such that $u^{\infty}_{s}=u^{\infty}_{b,s}$. In other words, we are interesting in general incident fields such that the associated far field of the scattered fields for the cracks and for the artificial obstacle coincide. In such a case, from the Rellich lemma, this implies $u_{s}=u_{b,s}$ in $\R^d\setminus(\overline{\Om}\cup\Gamma)$ and so $u=u_i+u_s$ must coincide with $u_b=u_i+u_{b,s}$ in $\R^d\setminus(\overline{\Om}\cup\Gamma)$. Since $u_b=0$ on $\partial\Om$, we then have 
\begin{equation}\label{eq:RTEPb+AB}
\begin{array}{|rccl}
\Delta u + k^2 u &=&  0 & \text{in } \Omega \setminus\Gammabar\\[2pt]
\partial^{\pm}_{\nu} u &=& 0 &\text{on } \Gamma\cap\Om\\[2pt]
 u & = & 0 & \text{on } \d\Omega.
\end{array}
\end{equation}
\begin{definition}\label{DefinitionRTE}
We say that $k^2>0$ is a Relative Transmission Eigenvalue (RTE) if Problem (\ref{eq:RTEPb+AB}) admits a non zero solution $u\in\mH^1(\Omega \setminus\Gammabar)$.
\end{definition}
\noindent Let us make a first simple observation: if the chosen artificial obstacle $\Omega$ is such that $\Gammabar\cap\Omega=\emptyset$, then the RTEs coincide with the spectrum of the problem
\begin{equation}\label{eq:RTEPbNoCrack+AB}
\begin{array}{|rccl}
\Delta u + k^2 u &=&  0 & \mbox{ in } \Omega\\
u & = & 0 & \mbox{ on } \d\Omega 
\end{array}
\end{equation}
which can be computed independently from the data (it suffices to compute the eigenvalues of the Dirichlet Laplacian in $\Om$). It will be shown in the next paragraph that the spectrum of \eqref{eq:RTEPb+AB} can be computed from far field data. Consequently it is possible to determine whether $\Gammabar\cap\Omega$ is empty or not by comparing the spectra of \eqref{eq:RTEPb+AB} and \eqref{eq:RTEPbNoCrack+AB}. Using this result and sweeping the probed region with a collection of artificial obstacles $\Om$, we can then identify the cracks $\Gamma$. Note that with this approach, the resolution of the image is directly related to the size of $\Om$.\\
\newline
To improve the quality of the reconstruction, one can also use a result of monotonicity for the spectrum of (\ref{eq:RTEPbNoCrack+AB}) with respect to the size of the cracks that we describe now. Classical results concerning linear self-adjoint compact operators guarantee that the spectrum of (\ref{eq:RTEPb+AB}) is made of real positive isolated eigenvalues of finite multiplicity $0<k^2_1 \le k^2_2 \le \dots \le k^2_p \le\dots$ (the numbering is chosen so that each eigenvalue is repeated according to its multiplicity). Moreover, there holds $\lim_{p\to+\infty}k^2_p=+\infty$ and we have the \textit{min-max} formulas
\begin{equation}\label{FormuleMinMax}
k_p^2=\min_{E_p\in\mathscr{E}_p}\max_{u\in E_p\setminus\{0\}}\cfrac{\|\nabla u\|^2_{\mL^2(\Om)}}{\|u\|^2_{\mL^2(\Om)}}.
\end{equation}
Here $\mathscr{E}_p$ denotes the set of subspaces $E_p$ of $\{\varphi\in\mH^1(\Om\setminus\overline{\Gamma})\,|\,\varphi=0\mbox{ on }\partial\Om\}$ of dimension $p$. As a consequence, if $(\Gamma\cap\Om)\subset(\tilde{\Gamma}\cap\Om)$ and if we denote by $\tilde{k}^2_p$ the eigenvalues of (\ref{eq:RTEPb+AB}) with $\Gamma$ replaced by $\tilde{\Gamma}$, we obtain $k^2_p \ge \tilde{k}^2_p$ for all $p\in\N^{\ast}:=\{1,2,\dots\}$. Such a result is interesting to get an idea of how damaged is a material in presence of a dense network of cracks. In order to state our results below, we will have to exclude certain values of $k^2$, coinciding with a subset of RTEs. 

\begin{definition}\label{DefinitionRNSE}
Particular RTEs  $k^2>0$ for which one of the corresponding eigenfunctions is of the form $u=v_g$ in $\Omega \setminus\Gammabar$, where $v_g$ is a Herglotz wave defined by \eqref{HerglotzWave}, are called Relative Non Scattering Eigenvalues (RNSEs). 
\end{definition}
\begin{remark}
The condition $u=v_g$ in $\Omega \setminus\Gammabar$ means that the eigenfunction of (\ref{eq:RTEPb+AB}) can be extended to a Herglotz wave in the whole space. This is a rather restrictive condition and it has been shown that such $k^2$ do not exist in certain contexts close to the one considered here (see \cite{BlPS14,PaSV17,ElHu15,ElHu18,CaXi19,BlLX19}).
\end{remark}
\noindent Before proceeding, we state a factorization result of the operator $F^b$ that will be useful in the analysis. Its classical proof can be found in \cite{kirsch2008factorization}. Define the  operator $H_{\partial\Om} : \mL^2(\dsphere) \to \mH^{-1/2}(\partial\Om)$ such that 
\begin{equation}\label{DefHpartialOm}
H_{\partial\Om} g=  \partial_{\nu}u_b|_{\partial\Om},
\end{equation}
where $u_b$ is the total field defined in (\ref{HerglotzWave}) with $u_i=v_g$. Similarly to $F_\sharp$ in (\ref{DefFsharp}), we introduce
$$
F^b_\sharp := |\Re e\,F^b| + \Im m\,F^b.
$$ 
\begin{proposition}\label{PropositionFactoTpartialOm}
Assume that $k^2$ is not a Dirichlet eigenvalue of problem \eqref{eq:RTEPbNoCrack+AB}. Then we have the factorization
\[
F^b_\sharp=H_{\partial\Om}^{\ast}\,T_{\partial\Om}\,H_{\partial\Om},
\]
where the bounded operator $T_{\partial\Om}:\mH^{-1/2}(\partial\Om)\to \mH^{1/2}(\partial\Om)$ is such that there is $c>0$ such that
\[
\begin{array}{ll}
\forall\varphi\in\mH^{-1/2}(\partial\Om),
\qquad &\langle \varphi,T_{\partial\Om} \varphi\rangle_{\partial\Om} \ge c\,\|\varphi\|^2_{\mH^{-1/2}(\partial\Om)}.
\end{array}
\]
Here $\langle\cdot,\cdot\rangle_{\partial\Om}$ denotes the duality pairing in $\mH^{-1/2}(\partial\Om)\times\mH^{1/2}(\partial\Om)$. 
\end{proposition}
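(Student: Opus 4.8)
The plan is to follow the classical factorization-method argument for a sound-soft obstacle (see \cite{kirsch2008factorization}) and to reduce the statement to properties of the single-layer boundary operator on $\partial\Om$. Let $\Phi$ denote the outgoing fundamental solution of $\Delta+k^2$ and let $\mrm{SL}\,\phi(x)=\int_{\partial\Om}\Phi(x,y)\phi(y)\,ds(y)$ be the associated single-layer potential. Since $u_b=0$ on $\partial\Om$, the function equal to $u_{b,s}$ in $\R^d\setminus\overline{\Om}$ and to $-u_i$ in $\Om$ is continuous across $\partial\Om$, radiating, and solves the Helmholtz equation on both sides, with a jump of its normal derivative across $\partial\Om$ equal to $\partial_\nu u_b|_{\partial\Om}=H_{\partial\Om}g$. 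By the jump relations for single-layer potentials it coincides with $-\mrm{SL}(H_{\partial\Om}g)$, so that $u_{b,s}=-\mrm{SL}(H_{\partial\Om}g)$ in $\R^d\setminus\overline{\Om}$. Passing to the far field using the asymptotics of $\Phi$ then gives $F^b=-G_\infty H_{\partial\Om}$, where $G_\infty:\mH^{-1/2}(\partial\Om)\to\mL^2(\dsphere)$ sends $\phi$ to the far field $\hat x\mapsto\int_{\partial\Om}e^{-ik\hat x\cdot y}\phi(y)\,ds(y)$ of $\mrm{SL}\,\phi$.

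Next I would identify the middle operator by reciprocity. Writing $S_{\partial\Om}:\mH^{-1/2}(\partial\Om)\to\mH^{1/2}(\partial\Om)$ for the single-layer boundary operator (the Dirichlet trace of $\mrm{SL}$), the condition $u_b=0$ on $\partial\Om$ yields $v_g|_{\partial\Om}=-u_{b,s}|_{\partial\Om}=S_{\partial\Om}H_{\partial\Om}g$. Computing $\langle G_\infty\phi,g\rangle_{\mL^2(\dsphere)}$ and using the symmetry $\Phi(x,y)=\Phi(y,x)$, one recognises $G_\infty=H_{\partial\Om}^\ast\,\overline{S_{\partial\Om}}$, and therefore
\[
F^b=-H_{\partial\Om}^\ast\,\overline{S_{\partial\Om}}\,H_{\partial\Om}=:H_{\partial\Om}^\ast\,T_0\,H_{\partial\Om},\qquad T_0:=-\overline{S_{\partial\Om}}.
\]

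It then remains to pass from $F^b$ to $F^b_\sharp$ using the abstract theory underlying the factorization method (see \cite{kirsch2008factorization,CCH}). The real part of $S_{\partial\Om}$ is coercive up to a compact perturbation, since $S_{\partial\Om}$ differs by a compact operator from the Laplace single-layer operator, which satisfies a G\aa rding inequality on $\mH^{-1/2}(\partial\Om)$; and its imaginary part is sign-definite, with $\Im m\,\langle\phi,S_{\partial\Om}\phi\rangle_{\partial\Om}=c_0\,\|G_\infty\phi\|^2_{\mL^2(\dsphere)}\ge0$ for some $c_0>0$. Thus $T_0$ satisfies the hypotheses of the abstract factorization lemma. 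The assumption that $k^2$ is not a Dirichlet eigenvalue of \eqref{eq:RTEPbNoCrack+AB} enters exactly here: it is the condition ensuring that $S_{\partial\Om}$ is injective, interior resonances being the only obstruction to invertibility of the single-layer operator. Under these hypotheses the abstract result yields $F^b_\sharp=H_{\partial\Om}^\ast\,T_{\partial\Om}\,H_{\partial\Om}$ with $T_{\partial\Om}:\mH^{-1/2}(\partial\Om)\to\mH^{1/2}(\partial\Om)$ bounded and satisfying $\langle\varphi,T_{\partial\Om}\varphi\rangle_{\partial\Om}\ge c\,\|\varphi\|^2_{\mH^{-1/2}(\partial\Om)}$.

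I expect the delicate point to be the coercivity of $T_{\partial\Om}$, not the algebraic factorization. The passage $F^b\mapsto|\Re e\,F^b|+\Im m\,F^b$ does not commute with the factorization in an elementary way, so one cannot simply take $T_{\partial\Om}=|\Re e\,T_0|+\Im m\,T_0$; the lower bound must instead be extracted from the abstract lemma of \cite{kirsch2008factorization}, which combines the coercivity-modulo-compact of the real part with the strict positivity of the imaginary part on the kernel of the real part, the injectivity of $S_{\partial\Om}$ ruling out the degenerate directions. Carefully tracking the signs and complex conjugations introduced by the single-layer representation, so that the final $T_{\partial\Om}$ is positive as stated, is the other point that requires care.
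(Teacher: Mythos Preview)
Your proposal is correct and follows precisely the classical argument the paper has in mind: the paper does not give its own proof of this proposition but simply refers the reader to \cite{kirsch2008factorization}, and what you have sketched is exactly the sound-soft factorization from that reference (single-layer representation of $u_{b,s}$, identification of the middle operator with $-\overline{S_{\partial\Om}}$, then the abstract $F\mapsto F_\sharp$ lemma using the G\aa rding inequality for $S_{\partial\Om}$ and its injectivity away from Dirichlet eigenvalues). Your caveats about signs and about the non-trivial passage from $F^b$ to $F^b_\sharp$ are well placed, but there is nothing to add beyond the standard treatment.
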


\subsection{Computation of the Relative Transmission Eigenvalues from the data}

In this paragraph, we explain how to compute RTEs from the data. The results justifying the method are the Theorems \ref{th:GLSM+AB} and \ref{th:RTEsComputation+AB} below. The idea is similar to the one used to characterize classical transmission eigenvalues from the knowledge of the far field operator $F$ (see \cite{CCH_Determination_Dirichlet&TEs}). We start by writing a factorization of the relative operator $F^{\rel}$. For $g\in \mL^2(\dsphere)$, consider the incident field $u_i=v_g$ as in (\ref{HerglotzWave}). Denote $u$, $u_{b}$ the total fields defined respectively via (\ref{PbChampTotalFreeSpaceIntrod}), (\ref{PbChampTotalFreeSpaceComp}) and set
\[
w := 
\left\{
\begin{array}{ ll}
u-u_b & \text{in } \R^d\setminus\Omegabar\\
u  & \text{in } \Omega.
\end{array}\right.
\]
With such a definition, we have $w^\infty = F^\rel g$ where $w^\infty$ stands for the far field pattern of $w$. The function $w$ solves the problem
\begin{equation}\label{eq:Def_G+AB}
\begin{array}{|rcllrcll}
\multicolumn{8}{|l}{\mbox{Find }w\in\mH^1_{\loc}(\R^d\setminus\overline{\Gamma})\mbox{ such that }}\\[3pt]
\Delta w+k^2\,w & = & 0 & \mbox{ in }\R^d\setminus(\partial\Omega\cup\overline{\Gamma}) & \\[3pt]
\partial^{\pm}_{\nu} w &=& 0 &\mbox{ on } \Gamma\cap\Omega & \qquad\qquad \left[ \partial_{\nu}w\right] &=& - \psi_1  & \mbox{ on } \d\Omega\\[3pt]
\partial^{\pm}_{\nu} w &=&   - \psi_2  & \mbox{ on } \Gamma\cap\Omega^c & \multicolumn{4}{l}{\qquad\qquad +\ \mbox{Radiation condition}},
\end{array}
\end{equation}
where $\psi_1 =  \partial_{\nu}^+u_b|_{\partial \Omega}$ and $\psi_2 =  \partial_{\nu} u_b|_{\Gamma\cap\Omega^c}$. Note that on $\partial\Om$, the unit vector $\nu$ is directed to the exterior of $\Om$. On $\partial\Om$, the quantities $[\cdot]$, $[\partial_{\nu}\cdot]$ are then defined as in (\ref{DefJumps}). Consider the space $\mY:=\mH^{-1/2}(\partial\Om)\times\mH^{-1/2}(\Gamma\cap\Omega^c)$, endowed with the natural norm that we denote $\|\cdot\|_{\mY}$, and introduce the operators 
\begin{equation}\label{DefHrel}
\begin{array}{rccl}
H^\rel:&\mL^2(\dsphere)& \rightarrow & \mY \\
&g &\mapsto & (\partial_{\nu}^+u_b|_{\partial\Om},\partial_{\nu} u_b|_{\Gamma\cap\Omega^c}),
 \end{array}
\end{equation}
where $u_b$ is the solution of (\ref{PbChampTotalFreeSpaceComp}) with $u_i=v_g$. Define also
\begin{equation}\label{DefGrel}
\begin{array}{rccl}
G^\rel: & \mY &  \rightarrow & \mL^2(\mathbb{S}^{d-1})\\
&(\psi_1,\psi_2) &\mapsto & w^\infty ,
\end{array}
\end{equation}
where $w^\infty$ is the far field pattern of the solution of \eqref{eq:Def_G+AB}. The following factorization of $F^\rel$ is then straightforward
\[
F^\rel = G^\rel H^\rel.
\]
For $z\in\R^d$, introduce $\Phi_z$ the outgoing Green function which solves  
$\Delta \Phi_z + k^2\Phi_z= -\delta_z$ in the sense of distributions of $\R^d$ and such that 
\begin{equation}\label{FormulaPhi}
 \Phi_z(x) = \frac{i}{4}(J_0(k|x-z|)+iY_0(k|x-z|)) \text{ if } d=2;
\qquad \qquad
\Phi_z(x) = \frac{1}{4\pi}\frac{e^{ik|x-z|}}{|x-z|} \text{ if } d=3.
\end{equation}
Here $J_0$ and $Y_0$ are respectively the Bessel functions of order zero of first kind and of second kind. It is known that the far field pattern of $\Phi_z$ is the function $\Phi_z^\infty$ such that $\Phi_z^\infty(\theta_s)=e^{-ikz\cdot\theta_s}$ for $\theta_s\in\dsphere$. We begin with the simple but fundamental following result.
\begin{proposition} \label{Prop:LSM+AB}
Assume that $k^2$ is not a RTE. Then $\Phi_z^\infty\in \mrm{Range}\,G^\rel$ if and only if $z\in\Omega$.
\end{proposition}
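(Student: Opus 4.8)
The plan is to prove the two implications by the classical linear-sampling strategy: sufficiency by an explicit construction that uses the solvability of the interior problem, and necessity by Rellich's lemma combined with a regularity argument.

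For sufficiency, I would assume $z\in\Om$ and exhibit a pair $(\psi_1,\psi_2)\in\mY$ whose associated solution $w$ of \eqref{eq:Def_G+AB} has far field $\Phi_z^\infty$. I set $w=\Phi_z$ in $\R^d\setminus\Omegabar$; since $z\in\Om$ this is a smooth radiating Helmholtz solution there, so it fulfills the radiation condition and its far field is precisely $\Phi_z^\infty$. Inside $\Om$ I define $w$ to be the solution of
\[
\Delta w+k^2 w=0\ \text{ in }\Om\setminus\Gammabar,\qquad \partial_\nu^\pm w=0\ \text{ on }\Gamma\cap\Om,\qquad w=\Phi_z\ \text{ on }\d\Om.
\]
The homogeneous counterpart of this mixed problem (zero Dirichlet datum) is exactly \eqref{eq:RTEPb+AB}; since $k^2$ is not a RTE it has only the trivial solution, so by the Fredholm alternative the problem above is uniquely solvable for the datum $\Phi_z|_{\d\Om}\in\mH^{1/2}(\d\Om)$, giving $w|_\Om\in\mH^1(\Om\setminus\Gammabar)$. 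By construction the two definitions match on $\d\Om$, so $w\in\mH^1_{\loc}(\R^d\setminus\Gammabar)$, and $w$ solves \eqref{eq:Def_G+AB} with $\psi_2=-\partial_\nu\Phi_z|_{\Gamma\cap\Om^c}\in\mH^{-1/2}(\Gamma\cap\Om^c)$ and $\psi_1=-[\partial_\nu w]|_{\d\Om}\in\mH^{-1/2}(\d\Om)$. Hence $(\psi_1,\psi_2)\in\mY$ and $G^\rel(\psi_1,\psi_2)=w^\infty=\Phi_z^\infty$.

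For necessity, I would suppose $\Phi_z^\infty\in\mrm{Range}\,G^\rel$ with $z\notin\Om$ and reach a contradiction. Let $w$ solve \eqref{eq:Def_G+AB} with $w^\infty=\Phi_z^\infty$, and let $U$ be the unbounded connected component of $\R^d\setminus(\Omegabar\cup\Gammabar)$; under the hypotheses $\Om^c$ connected and $\Gammabar\cap\d\Om=\emptyset$ the set $U$ is in fact all of $\R^d\setminus(\Omegabar\cup\Gammabar)$. On $U$, $w$ is a radiating Helmholtz solution. If $z\notin U$ then $\Phi_z$ also solves the Helmholtz equation on $U$, so $w-\Phi_z$ is radiating with vanishing far field and Rellich's lemma together with unique continuation give $w=\Phi_z$ on $U$; if instead $z\in U$ the same conclusion holds on the connected punctured set $U\setminus\{z\}$. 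In either case $w=\Phi_z$ on at least one side of a punctured neighbourhood of $z$. But $w\in\mH^1_{\loc}(\R^d\setminus\Gammabar)$ is $\mH^1$ near $z$ on the relevant side, whereas the local Dirichlet energy of $\Phi_z$ diverges at $z$ (its singularity is of type $\log|x-z|$ for $d=2$ and $|x-z|^{-1}$ for $d=3$, neither being $\mH^1$ near $z$, even on a half-neighbourhood). This contradiction forces $z\in\Om$.

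The routine ingredients are the factorization $F^\rel=G^\rel H^\rel$, already recorded before the statement, and checking that the constructed traces belong to $\mY$. The hard part will be the necessity argument when $z$ lies on the boundary of $U$, namely $z\in\Gamma\cap\Om^c$ or $z\in\d\Om$: then $z$ is not an interior point of $U$, and I must pass to the limit at $z$ from within $U$---from one side of the crack, or from the exterior of $\Om$---and argue that the one-sided restriction of $\Phi_z$ still has infinite Dirichlet energy while the corresponding one-sided restriction of $w$ is genuinely $\mH^1$ up to $z$ (here the disjointness $\Gammabar\cap\d\Om=\emptyset$ is what guarantees $w$ is $\mH^1$ on that side near $z$). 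Establishing the connectivity of $U$ and identifying the admissible approach directions is where the geometric hypotheses on $\Gamma$ and $\Om$ are genuinely used.
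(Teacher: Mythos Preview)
Your proposal is correct and follows essentially the same approach as the paper: the sufficiency direction via the explicit extension $w=\Phi_z$ outside $\Om$ together with the interior solve (whose well-posedness is guaranteed by $k^2$ not being a RTE), and the necessity direction via Rellich's lemma and the $\mH^1$ incompatibility with the singularity of $\Phi_z$. You are in fact more careful than the paper about the borderline cases $z\in\partial\Om$ and $z\in\Gamma\cap\Om^c$, which the paper's proof handles only implicitly through the blanket statement that $w\in\mH^1_{\loc}(\R^d\setminus\Gammabar)$; your one-sided energy argument is the right way to make those cases rigorous.
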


\begin{proof}
First consider some $z\in\Omega$. When $k^2$ is not a RTE, the problem 
\[
\begin{array}{|rccl}
\Delta w + k^2 w &=&  0 & \text{in } \Omega\setminus\Gammabar\\
\partial_{\nu}^{\pm}w &=& 0 & \text{on } \Gamma\cap\Omega\\
 w & = & \Phi_z & \text{on } \d\Omega. 
\end{array}
\]
admits a unique solution $w\in \mH^1(\Omega\setminus\Gammabar)$. Extend $w$ by $\Phi_z$ in $\R^d\setminus(\overline{\Om}\cup\overline{\Gamma})$. Then $w$ is a solution to \eqref{eq:Def_G+AB} with $\psi_1 = \partial_\nu(w-\Phi_z)|_{\partial\Om}$ and $\psi_2 = -\partial_{\nu}\Phi_z|_{\Gamma\cap\Omega^c}$, which shows that $\Phi_z^\infty\in \mrm{Range}\,G^\rel$.\\
\newline
Now let $z\in\R^d\setminus\Omega$. Assume that there exists $(\psi_1,\psi_2)\in\mY$ such that $G^\rel(\psi_1,\psi_2) = \Phi_z^\infty$. By the Rellich lemma,  we then have $w = \Phi_z$ in $\R^d\setminus(\overline{\Om}\cup\overline{\Gamma})$ where $w$ is the solution to \eqref{eq:Def_G+AB}. This leads to a contradiction due to the singular behaviour of $\Phi_z$ because $w$ is in $\mH^1_{\loc}(\R^d\setminus\overline{\Gamma})$.
\end{proof}
\noindent We now introduce a quantity that will serve as a penalization term below in the minimization of the functional $J^n_z$ (see \eqref{def:cost_function+AB}). We emphasize that it is a crucial ingredient in the two methods we propose. For $g\in \mL^2(\dsphere)$, set 
\begin{equation}\label{def:penalization+AB}
P(g) := ( F^\sharp g,g)_{\mL^2(\dsphere)}  +  (F^b_\sharp g,g)_{\mL^2(\dsphere)}.
\end{equation}
First, we show that $P(g)$ is equivalent to $\|H^\rel g\|_{\mY}$. In the following, $c,\,C>0$ denote generic constants whose value may change from one line to another but remains independent from $g\in \mL^2(\dsphere)$.
\begin{lemma}\label{Lem:Penalization+AB}
Assume that $k^2$ is not a Dirichlet eigenvalue of problem \eqref{eq:RTEPbNoCrack+AB} and that $\mathcal{H}_\Gamma : \mL^2(\dsphere) \to \mH^{-1/2}(\Gamma)$ is injective. Then there exist constants $c$, $C>0$ such that for all $g\in \mL^2(\dsphere)$, we have
\begin{equation} \label{equivH}
c\,\|H^\rel g\|^2_{\mY}\le 
\|H_{\partial\Omega} g\|^2_{\mH^{-1/2}(\partial\Om)}+\|\mathcal{H}_{\Gamma} g\|^2_{\mH^{-1/2}(\Gamma)} \leq C\,\|H^\rel g\|^2_{\mY}.
\end{equation}
Here $H^\rel$, $H_{\partial\Omega}$, $\mathcal{H}_{\Gamma}$ are respectively defined in (\ref{DefHrel}), (\ref{DefHpartialOm}), (\ref{DefHGamma}). This implies in particular that 
\begin{equation} \label{penalization-equiv}
c'\,\|H^\rel g\|^2_{\mY}\le P(g)\leq C'\,\|H^\rel g\|^2_{\mY}
\end{equation}
for some constants $c'$, $C'>0$ independent from $g\in \mL^2(\dsphere)$.
\end{lemma}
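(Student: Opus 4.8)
The plan is to reduce everything to a single boundary quantity, the Dirichlet trace $v_g|_{\partial\Om}$ of the free Herglotz wave on $\partial\Om$, and to show that each norm appearing in \eqref{equivH} is comparable to a combination of this trace and $\|\mathcal{H}_\Gamma g\|_{\mH^{-1/2}(\Gamma)}$. The starting point is that, since $u_b$ is defined only in $\R^d\setminus\Omegabar$ and vanishes on $\partial\Om$, the first component of $H^\rel g$ is exactly $H_{\partial\Om}g=\partial_\nu^+u_b|_{\partial\Om}$. Hence $\|H^\rel g\|_\mY^2=\|H_{\partial\Om}g\|^2_{\mH^{-1/2}(\partial\Om)}+\|\partial_\nu u_b|_{\Gamma\cap\Om^c}\|^2_{\mH^{-1/2}(\Gamma\cap\Om^c)}$, and the whole lemma amounts to comparing the second term with $\|\mathcal{H}_\Gamma g\|^2_{\mH^{-1/2}(\Gamma)}$, the first term being common to both sides.

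First I would establish the equivalence $\|H_{\partial\Om}g\|_{\mH^{-1/2}(\partial\Om)}\simeq\|v_g|_{\partial\Om}\|_{\mH^{1/2}(\partial\Om)}$. Writing $u_b=v_g+u_{b,s}$ with $u_{b,s}$ the radiating solution of the exterior Dirichlet problem with data $-v_g|_{\partial\Om}$, and using that $v_g$ solves the Helmholtz equation inside $\Om$, one obtains $H_{\partial\Om}g=(\Lambda_{\mrm{int}}-\Lambda_{\mrm{ext}})(v_g|_{\partial\Om})$, where $\Lambda_{\mrm{int}}$, $\Lambda_{\mrm{ext}}$ are the interior and exterior Dirichlet-to-Neumann maps on $\partial\Om$. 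Since $k^2$ is not a Dirichlet eigenvalue of $\Om$, both maps are well defined and their difference is an isomorphism from $\mH^{1/2}(\partial\Om)$ onto $\mH^{-1/2}(\partial\Om)$ (it coincides, up to sign, with the inverse of the single-layer boundary operator, invertible under the same hypothesis). This yields the announced two-sided bound.

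Next I would treat the contribution on $\Gamma$. The key geometric point is that $\Gammabar\cap\partial\Om=\emptyset$ forces $\mrm{dist}(\Gammabar,\partial\Om)>0$, so that $\Gamma\cap\Om$ and $\Gamma\cap\Om^c$ are two relatively clopen pieces of $\Gamma$ at positive distance from one another; consequently the $\mH^{-1/2}(\Gamma)$ norm splits, up to constants, into the sum of the norms over the two pieces. On $\Gamma\cap\Om^c$ I would use $\partial_\nu v_g=\partial_\nu u_b-\partial_\nu u_{b,s}$ together with the smoothing estimate $\|\partial_\nu u_{b,s}|_{\Gamma\cap\Om^c}\|_{\mH^{-1/2}}\lesssim\|v_g|_{\partial\Om}\|_{\mH^{1/2}(\partial\Om)}$, which follows from boundedness of the exterior solution operator and interior elliptic regularity away from $\partial\Om$. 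On $\Gamma\cap\Om$, since $k^2$ is not a Dirichlet eigenvalue, $v_g|_\Om$ is the unique solution of the interior Dirichlet problem with data $v_g|_{\partial\Om}$, and regularity at positive distance from $\partial\Om$ gives $\|\partial_\nu v_g|_{\Gamma\cap\Om}\|_{\mH^{-1/2}}\lesssim\|v_g|_{\partial\Om}\|_{\mH^{1/2}(\partial\Om)}$. Combining these with the trace equivalence of the previous step produces both $\|\partial_\nu u_b|_{\Gamma\cap\Om^c}\|\lesssim\|H_{\partial\Om}g\|+\|\mathcal{H}_\Gamma g\|$ and $\|\mathcal{H}_\Gamma g\|\lesssim\|H_{\partial\Om}g\|+\|\partial_\nu u_b|_{\Gamma\cap\Om^c}\|$, which are precisely the two inequalities in \eqref{equivH}.

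Finally, \eqref{penalization-equiv} follows by combining \eqref{equivH} with the factorizations of Propositions \ref{PropositionFactoTGamma} and \ref{PropositionFactoTpartialOm}: coercivity and boundedness of $\mathcal{T}_\Gamma$ and $T_{\partial\Om}$ give $(F_\sharp g,g)\simeq\|\mathcal{H}_\Gamma g\|^2$ and $(F^b_\sharp g,g)\simeq\|H_{\partial\Om}g\|^2$, whence $P(g)\simeq\|\mathcal{H}_\Gamma g\|^2+\|H_{\partial\Om}g\|^2\simeq\|H^\rel g\|_\mY^2$. I expect the main obstacle to be the treatment of $\Gamma\cap\Om$: controlling the free-field normal derivative there by the sole boundary trace on $\partial\Om$ is exactly what forces the non-eigenvalue hypothesis (through invertibility of both the interior Dirichlet problem and of $\Lambda_{\mrm{int}}-\Lambda_{\mrm{ext}}$), and the Sobolev bookkeeping on the open pieces of $\Gamma$ — restriction bounds and the splitting of the $\mH^{-1/2}(\Gamma)$ norm — must be carried out carefully using the geometric separation.
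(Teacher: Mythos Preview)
Your proposal is correct and follows essentially the same route as the paper: identify the first component of $H^\rel g$ with $H_{\partial\Om}g$, control the scattered field $u_{b,s}$ on $\Gamma\cap\Om^c$ by boundary data on $\partial\Om$, handle $\Gamma\cap\Om$ via the interior Dirichlet problem, and finish with the factorizations of $F_\sharp$ and $F^b_\sharp$. The only cosmetic difference is that the paper obtains the boundary control of $u_{b,s}$ and the estimate $\|v_g\|_{\mH^{1/2}(\partial\Om)}\le C\|H_{\partial\Om}g\|_{\mH^{-1/2}(\partial\Om)}$ directly from the single-layer integral representation $u_{b,s}(x)=-\int_{\partial\Om}\Phi_x\,\partial_\nu^+u_b\,ds$, whereas you package the same information as the invertibility of $\Lambda_{\mrm{int}}-\Lambda_{\mrm{ext}}$ (i.e.\ of the single-layer boundary operator); the two viewpoints are equivalent and neither buys anything the other does not.
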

\begin{proof} 
For $g\in \mL^2(\dsphere)$, let $u_b$ be the solution of (\ref{PbChampTotalFreeSpaceComp}) with $u_i=v_g$. Let us start by establishing the first inequality of (\ref{equivH}). By definition, we have 
\begin{equation}\label{TermsToControl}
\begin{array}{lcl}
\|H^\rel g\|^2_{\mY}&=&\|\partial_{\nu}^+u_b\|^2_{\mH^{-1/2}(\partial\Om)}+\|\partial_{\nu} u_b\|^2_{\mH^{-1/2}(\Gamma\cap\Omega^c)} \\[6pt]
&=&\|H_{\partial\Omega} g\|^2_{\mH^{-1/2}(\partial\Om)}+\|\partial_{\nu} u_b\|^2_{\mH^{-1/2}(\Gamma\cap\Omega^c)}.
\end{array}
\end{equation}
Since $u_b=v_g+u_{b,s}$, we can write
\begin{equation}\label{MonEq13}
\begin{array}{lcl}
\|\partial_{\nu} u_b\|_{\mH^{-1/2}(\Gamma\cap\Omega^c)} &\le& \|\partial_{\nu} v_g\|_{\mH^{-1/2}(\Gamma\cap\Omega^c)}+\|\partial_{\nu} u_{b,s}\|_{\mH^{-1/2}(\Gamma\cap\Omega^c)} \\[6pt]
&\le& \|\mathcal{H}_{\Gamma} g\|_{\mH^{-1/2}(\Gamma)}+\|\partial_{\nu} u_{b,s}\|_{\mH^{-1/2}(\Gamma\cap\Omega^c)}.
\end{array}
\end{equation}
The scattered field $u_{b,s}$ associated with (\ref{PbChampTotalFreeSpaceComp}) admits the integral representation, for $x\in \R^d \setminus \overline \Omega$, 
\begin{equation}\label{SingleLayerPot}
\begin{array}{lcl}
u_{b,s}(x) &= & \dsp\int_{\partial \Omega} \partial_{\nu} \Phi_x (y)u_{b,s}(y)-\Phi_x(y)\partial_{\nu}^+u_{b,s}(y)\, ds(y)\\[10pt]
&= &  \dsp\int_{\partial \Omega} \partial_{\nu} \Phi_x (y)(u_{b,s}+v_g)(y)-\Phi_x(y)\partial_{\nu}^+(u_{b,s}+v_g)(y)\, ds(y)\\[10pt]
 &= & - \dsp\int_{\partial \Omega} \Phi_x (y)\partial_{\nu}^+u_{b} \, ds(y).
\end{array}
\end{equation}
Note that the second line above has been obtained from the first one thanks to a simple integration by parts. The continuity properties of single layer potentials (see \cite[Theorem 6.11]{McLean}) then imply that 
 \begin{equation} \label{estimusb}
\|\partial_{\nu} u_{b,s}\|_{\mH^{-1/2}(\Gamma\cap\Omega^c)} \le C \|\partial_{\nu}^+u_b\|_{\mH^{-1/2}(\partial\Om)}=C \|H_{\partial\Omega} g\|_{\mH^{-1/2}(\partial\Om)}.
\end{equation}
 Consequently, since   $v_g = u_b - u_{b,s}$, we can write
\begin{equation}\label{MonEq6}
\|\partial_{\nu} v_g\|_{\mH^{-1/2}(\Gamma\cap\Omega^c)} \le C \left(\|\partial_{\nu} u_b\|_{\mH^{-1/2}(\Gamma\cap\Omega^c)}+ \|\partial_{\nu}^+u_b\|_{\mH^{-1/2}(\partial\Om)}  \right).
\end{equation}
Inserting \eqref{MonEq6} in \eqref{MonEq13} and \eqref{MonEq13} in \eqref{TermsToControl}, we get the first inequality of (\ref{equivH}).\\ 
\newline
Let us prove the second estimate of (\ref{equivH}). Since there holds $\Delta v_g+k^2v_g=0$ in $\Om$, when $k^2$ is not a Dirichlet eigenvalue for the Laplace operator in $\Omega$, we can write
\begin{equation}\label{MonEq4}
\|\partial_{\nu} v_g\|_{\mH^{-1/2}(\Gamma\cap \Omega)}\le 
c\,(\|\Delta v_g\|_{\mL^2(\Om)}+\|v_g\|_{\mH^1(\Om)}) \le C \|v_g\|_{\mH^{1/2}(\partial\Om)}.
\end{equation}
But on $\partial\Om$, there holds $v_g=-u_{b,s}$ and again from the continuity properties of single layer potentials, from (\ref{SingleLayerPot}), we get 
\begin{equation}\label{MonEq4ter}
\|v_g\|_{\mH^{1/2}(\partial\Om)}=\|u_{b,s}\|_{\mH^{1/2}(\partial\Om)} \le  C \|\partial_{\nu}^+u_b\|_{\mH^{-1/2}(\partial\Om)}=C \|H_{\partial\Omega} g\|_{\mH^{-1/2}(\partial\Om)}.
\end{equation}
Gathering (\ref{MonEq6}), (\ref{MonEq4}) and (\ref{MonEq4ter}), we obtain
\[
\|\mathcal{H}_{\Gamma} g\|_{\mH^{-1/2}(\Gamma)} = \|\partial_{\nu} v_g\|_{\mH^{-1/2}(\Gamma)}\le C\,\|H_{\partial\Omega} g\|_{\mH^{-1/2}(\partial\Om)} \le C\,\|H^\rel g\|_{\mY}.
\]
This is enough to conclude to the second estimate of (\ref{equivH}).\\
\newline
Finally, from the factorization results of Propositions \ref{PropositionFactoTGamma} and \ref{PropositionFactoTpartialOm}, we can write
\[
\begin{array}{rcl}
c\,\|H_{\partial\Omega} g\|^2_{\mH^{-1/2}(\partial\Om)} &\le&  \langle H_{\partial\Omega} g,T_{\partial\Om} H_{\partial\Omega} g\rangle_{\partial\Om}= ((F^b)^{\sharp}g,g)_{\mL^2(\dsphere)}\\[6pt]
c\,\|\mathcal{H}_{\Gamma} g\|^2_{\mH^{-1/2}(\Gamma)} &\le& \langle \mathcal{H}_{\Gamma} g,\mathcal{T}_\Gamma \mathcal{H}_{\Gamma} g\rangle_{\Gamma}= (F^{\sharp}g,g)_{\mL^2(\dsphere)}.
\end{array}
\]
Summing these two lines and using (\ref{equivH}) provides the first inequality of \eqref{penalization-equiv}. From the continuity properties of the operators $T_{\partial\Om}$, $\mathcal{T}_{\Gamma}$ we obtain 
\[
\begin{array}{rcl}
((F^b)^{\sharp}g,g)_{\mL^2(\dsphere)} &\le& C\,\|H_{\partial\Omega} g\|_{\mH^{-1/2}(\partial\Om)}^2 \\
(F^{\sharp}g,g)_{\mL^2(\dsphere)} &\le& C\,\|\mathcal{H}_{\Gamma} g\|_{\mH^{-1/2}(\Gamma)}^2.
\end{array}
\]
Combining these estimates and (\ref{equivH}) gives the second inequality of \eqref{penalization-equiv}.
\end{proof}

\noindent
Now we have all the ingredients to show that RTEs can be determined from $F^\rel$. Let $(\alpha_n)$ be a sequence of positive numbers such that $\lim_{n\to+\infty}\alpha_n=0$. For a given $z\in\Om$ and $n\in\N$, define the functional $J_z^n:\mL^2(\sphere)\to\R$ such that 
\begin{equation}\label{def:cost_function+AB}
J_z^n(g):= \alpha_n P(g)+
\|F^\rel g - \Phi_z^\infty\|_{\mL^2(\sphere)}^2
\end{equation}
where $P(g)$ is defined by (\ref{def:penalization+AB}). Set 
\begin{equation}\label{DefInf}
j_z^n := \inf_{g\in \mL^2(\dsphere)} J^n_z(g)\ge0
\end{equation}
and introduce a sequence  $(g^n_z)_{n\in\N}$ of elements of $\mL^2(\sphere)$ such that for all $n\in\N$,
\begin{equation}
\label{def:GLSMconvergenceCondition+AB}
 J_z^n(g_z^n)\leq j_z^n + \alpha_n^\eta.
\end{equation}
The power exponent $\eta$ is chosen such that $\eta>1$. Roughly speaking, for almost every $z\in\Om$, we will show that the quantity $\lim_{n\to+\infty}P(g_z^n)$ takes infinite values if and only if $k$ is a RTE. We split this result in Theorems \ref{th:GLSM+AB} and \ref{th:RTEsComputation+AB} below. 
\begin{theorem}\label{th:GLSM+AB}
Assume that $k^2$ is not a RTE nor a Dirichlet eigenvalue of problem \eqref{eq:RTEPbNoCrack+AB}. Assume also that $\mathcal{H}_\Gamma : \mL^2(\dsphere) \to \mH^{-1/2}(\Gamma) 
$ is injective. Then for all $z\in\Omega$, the sequence $(P(g_z^n))$ remains bounded as $n\to+\infty$.
\end{theorem}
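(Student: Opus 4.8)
The plan is to combine the coercivity/continuity equivalence of Lemma~\ref{Lem:Penalization+AB}, namely $P(g)\sim\|H^\rel g\|^2_{\mY}$, with the fact that $z\in\Om$ forces $\Phi_z^\infty\in\mrm{Range}\,G^\rel$ by Proposition~\ref{Prop:LSM+AB}. Writing $\Phi_z^\infty=G^\rel\chi$ for some $\chi\in\mY$, I would proceed in two steps: first show that the infimum decays at least linearly, $j_z^n\le C\alpha_n$ with $C$ independent of $n$; then use the almost-minimizing property \eqref{def:GLSMconvergenceCondition+AB} together with $\eta>1$ to transfer this decay to $P(g_z^n)$.

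For the first step I would introduce the auxiliary functional $\mathcal{J}_n(\psi):=\alpha_n\|\psi\|^2_{\mY}+\|G^\rel\psi-\Phi_z^\infty\|^2_{\mL^2(\dsphere)}$ on $\mY$ and let $\chi_n$ be its unique minimizer. Testing against $\chi$ gives $\mathcal{J}_n(\chi_n)\le\mathcal{J}_n(\chi)=\alpha_n\|\chi\|^2_{\mY}$, whence $\|\chi_n\|_{\mY}\le\|\chi\|_{\mY}$ and $\|G^\rel\chi_n-\Phi_z^\infty\|_{\mL^2(\dsphere)}\le\sqrt{\alpha_n}\,\|\chi\|_{\mY}$. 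Next I would invoke the density of $\mrm{Range}\,H^\rel$ in $\mY$ to select, for each $n$, some $g\in\mL^2(\dsphere)$ with $\|H^\rel g-\chi_n\|_{\mY}\le\sqrt{\alpha_n}$. Using $F^\rel=G^\rel H^\rel$, the continuity of $G^\rel$, and the upper bound $P(g)\le C'\|H^\rel g\|^2_{\mY}$ of \eqref{penalization-equiv}, one estimates $\|H^\rel g\|_{\mY}\le\|\chi\|_{\mY}+\sqrt{\alpha_n}$ and $\|F^\rel g-\Phi_z^\infty\|_{\mL^2(\dsphere)}\le\|G^\rel\|\,\sqrt{\alpha_n}+\sqrt{\alpha_n}\,\|\chi\|_{\mY}$, so that $J_z^n(g)\le C\alpha_n$ with $C$ depending only on $z$. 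Taking the infimum yields $j_z^n\le C\alpha_n$.

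For the conclusion I would write $\alpha_n P(g_z^n)\le J_z^n(g_z^n)\le j_z^n+\alpha_n^\eta\le C\alpha_n+\alpha_n^\eta$, using $J_z^n(g_z^n)\ge\alpha_n P(g_z^n)$ from \eqref{def:cost_function+AB} and the almost-minimizing condition \eqref{def:GLSMconvergenceCondition+AB}. Dividing by $\alpha_n$ gives $P(g_z^n)\le C+\alpha_n^{\eta-1}$, and since $\eta>1$ and $\alpha_n\to0$ the right-hand side is bounded, which proves the claim. The care here is only to keep all constants independent of $n$, which holds because $\chi$ depends solely on $z$ and the equivalence constants of Lemma~\ref{Lem:Penalization+AB} are uniform in $g$.

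The main obstacle is the density of $\mrm{Range}\,H^\rel$ in $\mY=\mH^{-1/2}(\partial\Om)\times\mH^{-1/2}(\Gamma\cap\Om^c)$ used above; this is precisely where the hypotheses enter ($k^2$ not a Dirichlet eigenvalue of \eqref{eq:RTEPbNoCrack+AB} and $\mathcal{H}_\Gamma$ injective). Equivalently, one must show that the adjoint $(H^\rel)^\ast$ is injective. I would establish this by a duality/unique-continuation argument: if a pair $(\psi_1,\psi_2)$ annihilates the whole range, one forms the radiating field generated by $\psi_1$ on $\partial\Om$ and $\psi_2$ on $\Gamma\cap\Om^c$, uses $F^\rel=G^\rel H^\rel$ and the density of Herglotz far-field data to show that this field has vanishing far field, and then concludes via Rellich's lemma and unique continuation that $\psi_1=\psi_2=0$. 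This is the kind of technical statement I would expect to be relegated to the appendix, the remaining steps being routine.
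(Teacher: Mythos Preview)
Your proof is correct and follows essentially the same route as the paper: use Proposition~\ref{Prop:LSM+AB} to write $\Phi_z^\infty=G^\rel\chi$, approximate $\chi$ in $\mY$ by elements of $\mrm{Range}\,H^\rel$ (Proposition~\ref{PropDenseRange} in the appendix), and conclude via the equivalence of Lemma~\ref{Lem:Penalization+AB}. Two minor remarks: the auxiliary Tikhonov minimizer $\chi_n$ is an unnecessary detour---the paper simply approximates $\chi$ directly by some $H^\rel\psi_n$ with $\|H^\rel\psi_n-\chi\|_{\mY}^2<\alpha_n$---and the hypotheses on $k^2$ and $\mathcal{H}_\Gamma$ actually enter through Lemma~\ref{Lem:Penalization+AB} (and Proposition~\ref{Prop:LSM+AB} for ``not a RTE''), not through the density of $\mrm{Range}\,H^\rel$, which the paper states without those assumptions.
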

\begin{proof}
Consider a point $z\in\Omega$. From the definitions of $J_z^n$, $g_z^n$ and $j_z^n$, there holds
\begin{equation}\label{ineq:GLSMfirstEstimation}
\alpha_n P(g_z^n)  \leq   J_z^n(g_z^n) \leq j_z^n + \alpha_n^\eta.
\end{equation}
According to Proposition \ref{Prop:LSM+AB}, there exists $\varphi_z\in\mY$ such that $G^\rel\varphi_z=\Phi_z^\infty$. Using the fact that the range of $H^\rel$ is dense in $\mY$ (see Proposition \ref{PropDenseRange} in Appendix),  there exists $\psi_n\in\mL^2(\dsphere)$ such that $\|H^\rel\psi_n-\vf_z\|_{\mY}^2 <\alpha_n$. The continuity of $G^\rel:\mY\to\mL^2(\sphere)$ then allows us to write
\begin{equation}\label{Relation1}
\begin{array}{lcl}
j_z^n \le J^n_z(\psi_n)&=& \alpha_n P(\psi_n)+
\|F^\rel \psi_n - \Phi_z^\infty\|_{\mL^2(\sphere)}^2\\[4pt]
 & \le & \alpha_n P(\psi_n)+\|G^\rel\|^2\|H^\rel\psi_n-\vf_z\|_{\mY}^2\\[4pt]
 & \le & \alpha_n\,(P(\psi_n)+\|G^\rel\|^2).
\end{array}
\end{equation}
Inserting (\ref{Relation1}) in (\ref{ineq:GLSMfirstEstimation}) and using Lemma  \ref{Lem:Penalization+AB}, we conclude  that 
\[
P(g_z^n) \le (C_2\|H^\rel\psi_n\|_{\mY}^2 +  \|G^\rel\|^2) + \alpha_n^{\eta-1}.
\]
This guarantees that $(P(g_z^n))$ remains bounded as $n\to+\infty$ (note that $\|H^\rel\psi_n\|_{\mY}^2 \le \|\varphi_z\|_{\mY}^2+\alpha_n$). 
\end{proof}

\noindent
The final ingredient allowing us to obtain a method to compute RTEs is the following result complementary to the previous one.

\begin{theorem}\label{th:RTEsComputation+AB}
Assume that $k^2$ is a RTE but not a Dirichlet eigenvalue of \eqref{eq:RTEPbNoCrack+AB} and not a RNSE. Assume also that $\mathcal{H}_\Gamma : \mL^2(\dsphere) \to \mH^{-1/2}(\Gamma) 
$ is injective. Then for all non empty open set $\mathcal{O}\subset\Omega$, the function  $z\mapsto\liminf_{n\to+\infty}P(g_z^n)$ is not bounded in $\mathcal{O}$.
\end{theorem}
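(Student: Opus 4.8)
The plan is to argue by contradiction: I assume there is a nonempty open set $\mathcal{O}\subset\Omega$ and a constant $M>0$ such that $\liminf_{n}P(g_z^n)\le M$ for every $z\in\mathcal{O}$, and I aim to produce a nonzero RTE eigenfunction that is forced to vanish. Since $k^2$ is a RTE, I fix once and for all a nonzero $u_0\in\mH^1(\Omega\setminus\Gammabar)$ solving \eqref{eq:RTEPb+AB}; recall $u_0=0$ on $\d\Omega$ and $\partial_\nu^\pm u_0=0$ on $\Gamma\cap\Omega$. The whole argument rests on turning the boundedness assumption into the statement $\Phi_z^\infty\in\mrm{Range}\,G^\rel$ for each $z\in\mathcal{O}$, and then contradicting this through the structure of the interior problem at a RTE.

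First I would run the reverse direction of the GLSM machinery underlying Theorem \ref{th:GLSM+AB}. Fix $z\in\mathcal{O}$ and pass to a subsequence realizing the liminf, so that $P(g_z^n)$ stays bounded. By Lemma \ref{Lem:Penalization+AB} the sequence $H^\rel g_z^n$ is then bounded in $\mY$, hence, up to a further extraction, $H^\rel g_z^n\rightharpoonup h_z$ weakly for some $h_z\in\mY$; since $G^\rel$ is bounded, $F^\rel g_z^n=G^\rel H^\rel g_z^n\rightharpoonup G^\rel h_z$. The auxiliary fact I need here is $j_z^n\to 0$, equivalently $\Phi_z^\infty\in\overline{\mrm{Range}\,F^\rel}$; granting it, the defining property \eqref{def:GLSMconvergenceCondition+AB} gives $\|F^\rel g_z^n-\Phi_z^\infty\|_{\mL^2(\sphere)}^2\le j_z^n+\alpha_n^\eta\to 0$, so $F^\rel g_z^n\to\Phi_z^\infty$ strongly and, by uniqueness of weak limits, $G^\rel h_z=\Phi_z^\infty$, i.e. $\Phi_z^\infty\in\mrm{Range}\,G^\rel$.

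Next I would convert this into an orthogonality relation and spread it by analyticity. Arguing as in Proposition \ref{Prop:LSM+AB}, the membership $\Phi_z^\infty\in\mrm{Range}\,G^\rel$ with $z\in\Omega$ forces the associated solution $w_z$ of \eqref{eq:Def_G+AB} to coincide with $\Phi_z$ in the unbounded component (Rellich), so $w_z$ solves $\Delta w_z+k^2w_z=0$ in $\Omega\setminus\Gammabar$, $\partial_\nu^\pm w_z=0$ on $\Gamma\cap\Omega$, and $w_z=\Phi_z$ on $\d\Omega$. Applying Green's identity to $w_z$ and $u_0$ over $\Omega\setminus\Gammabar$, the crack terms drop (both normal derivatives vanish there) and the $\d\Omega$ term reduces, using $u_0=0$ on $\d\Omega$, to the solvability condition $\int_{\d\Omega}\Phi_z\,\partial_\nu u_0\,ds=0$, valid for every $z\in\mathcal{O}$. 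The map $z\mapsto\int_{\d\Omega}\Phi_z\,\partial_\nu u_0\,ds$ is the single layer potential with density $\partial_\nu u_0$, which is real-analytic in $\Omega$; vanishing on the open set $\mathcal{O}$, it vanishes on all of $\Omega$ by unique continuation. Its continuity across $\d\Omega$, together with uniqueness for the exterior radiating Dirichlet problem, forces this potential to vanish in all of $\R^d$, so the jump relations give $\partial_\nu u_0=0$ on $\d\Omega$. Thus $u_0$ has vanishing Cauchy data on $\d\Omega$; extending it by $0$ outside $\Omega$ (licit since $\Gammabar\cap\d\Omega=\emptyset$) yields a solution of $\Delta u+k^2u=0$ in the connected set $\R^d\setminus\Gammabar$ that vanishes on a nonempty open set, hence vanishes identically by unique continuation. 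Therefore $u_0=0$, contradicting that $k^2$ is a RTE.

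I expect the main obstacle to be the auxiliary fact $\Phi_z^\infty\in\overline{\mrm{Range}\,F^\rel}$ (equivalently $j_z^n\to 0$) for $z\in\Omega$, and this is precisely where the exclusion of RNSEs is indispensable. Unlike in Theorem \ref{th:GLSM+AB}, here $\Phi_z^\infty$ does \emph{not} generally belong to $\mrm{Range}\,G^\rel$: at a RTE the interior Dirichlet problem of the previous paragraph is solvable only when $z$ satisfies the orthogonality condition, which cuts out a nowhere dense set, so $\Phi_z^\infty$ typically leaves the range and one must show it still lies in its closure. Using the density of $\mrm{Range}\,H^\rel$ in $\mY$ (Proposition \ref{PropDenseRange}), one has $\overline{\mrm{Range}\,F^\rel}=\overline{\mrm{Range}\,G^\rel}$, so it suffices that $\Phi_z^\infty\perp\ker (G^\rel)^\ast$. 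Computing $(G^\rel)^\ast$ through far field reciprocity, a nonzero element of $\ker (G^\rel)^\ast$ that is not orthogonal to some $\Phi_z^\infty$ gives, after taking traces and invoking unique continuation together with the assumption that $k^2$ is not a Dirichlet eigenvalue of \eqref{eq:RTEPbNoCrack+AB}, a Herglotz wave which is an eigenfunction of \eqref{eq:RTEPb+AB}, that is, a RNSE. Excluding RNSEs removes this obstruction and secures $j_z^n\to 0$; carrying out this adjoint computation cleanly is the technical heart of the proof.
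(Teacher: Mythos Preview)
Your overall architecture matches the paper's: contradiction, boundedness of $P(g_z^n)$ via Lemma~\ref{Lem:Penalization+AB}, weak compactness of $(H^\rel g_z^n)$ in $\mY$, $j_z^n\to 0$ from the RNSE exclusion, hence $G^\rel h_z=\Phi_z^\infty$, and finally a contradiction with the RTE eigenfunction. Two points deserve comment.

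\medskip
\noindent\textbf{The final contradiction.} Here you take a genuinely different, and clean, route. The paper, after obtaining $\int_{\partial\Omega}\partial_\nu w_0\,\Phi_z\,ds=0$ for $z\in\mathcal{O}$, invokes Green's representation to write $w_0(z)$ as a combination of layer potentials supported on $\Gamma\cap\Omega$, observes this is an outgoing Helmholtz solution in $\R^d\setminus(\Gammabar\cap\Omega)$ coinciding with $w_0$ on $\mathcal{O}$, and uses unique continuation together with $w_0|_{\partial\Omega}=0$ to kill it. You instead read $\int_{\partial\Omega}\Phi_z\,\partial_\nu u_0\,ds=0$ directly as the vanishing on $\mathcal{O}$ of the single layer potential with density $\partial_\nu u_0|_{\partial\Omega}$, propagate to all of $\R^d$ by unique continuation inside $\Omega$, continuity of the single layer across $\partial\Omega$, and exterior Dirichlet uniqueness, and then use the normal-derivative jump to force $\partial_\nu u_0=0$ on $\partial\Omega$; extending $u_0$ by zero and applying unique continuation in $\R^d\setminus\Gammabar$ finishes. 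This is correct and arguably more elementary, as it avoids the crack-layer representation entirely.

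\medskip
\noindent\textbf{The density step.} Your last paragraph is where the argument wobbles. You correctly reduce $j_z^n\to 0$ to $\Phi_z^\infty\in\overline{\mrm{Range}\,F^\rel}=(\ker(G^\rel)^*)^\perp$, but the sketch that a nonzero $h\in\ker(G^\rel)^*$ failing $h\perp\Phi_z^\infty$ produces, ``after taking traces and unique continuation'', a Herglotz eigenfunction of \eqref{eq:RTEPb+AB} is not substantiated: $(G^\rel)^*$ maps into $\mY^*$, not into Herglotz densities, and no mechanism is given that turns such an $h$ into a Herglotz wave solving \eqref{eq:RTEPb+AB}. The paper sidesteps this entirely by proving (Proposition~\ref{densityFrel}) that $F^\rel$ has dense range in $\mL^2(\dsphere)$ whenever $k^2$ is not a RNSE: the reciprocity relations $u_s^\infty(\theta_s,\theta_i)=u_s^\infty(-\theta_i,-\theta_s)$ and their analogue for $u_{b,s}^\infty$ give $(F^\rel)^*h=0\Leftrightarrow F^\rel\tilde h=0$ with $\tilde h(\theta)=\overline{h(-\theta)}$, and injectivity of $F^\rel$ is exactly the non-RNSE hypothesis. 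With dense range, $\Phi_z^\infty\in\overline{\mrm{Range}\,F^\rel}$ is automatic and $j_z^n\to 0$ follows in two lines. Replace your adjoint sketch by this reciprocity argument and the proof is complete.
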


\begin{proof}
We proceed by using a contradiction argument. Without lost of generality, we assume that $\overline{\mathcal{O}}\subset\Omega\setminus\Gammabar$ (otherwise work with a subset of $\mathcal{O}$). Assume that exists $M>0$ such that for all $z\in\mathcal{O}$, there holds
\[
\liminf_{n\to+\infty}P(g_z^n)\leq M .
\]
Pick $z\in\mathcal{O}$. The result of Lemma \ref{Lem:Penalization+AB}, which requires that $k^2$ is not a Dirichlet eigenvalue of \eqref{eq:RTEPbNoCrack+AB} implies that $(H^\rel g_z^{n})$ is bounded in $\mY$. As a consequence, there is a subsequence, also denoted $(g_z^{n})$, such that $(H^\rel g_z^{n})$ weakly converges in $\mY$ to a limit denoted $\psi_z$. Since the operator $G^\rel:\mY\to\mL^2(\dsphere)$ is compact (Proposition \ref{PropCompactness} in Appendix), we deduce that $(G^\rel H^\rel g_z^{n})=(F^\rel g_z^n)$ converges to $G^\rel\psi_z$.\\
\newline
Let us show that $(F^\rel g_z^n)$ converges to $\Phi^{\infty}_z$ as $n\to+\infty$. Using the fact that the range of $F^\rel$ is dense in $\mL^2(\dsphere)$ when $k^2$ is not a RNSE (Proposition \ref{densityFrel} in Appendix), for any $\eps>0$, we can find $g\in\mL^2(\dsphere)$ such that $\|F^\rel g - \Phi_z^\infty\|_{\mL^2(\sphere)}^2\le \eps$. By definition of $j^n_z$ (see (\ref{DefInf})), then we have
\[
j^n_z \le \alpha_nP(g)+\eps,
\]
which is enough to conclude that $\lim_{n\to+\infty}j^n_z=0$. From (\ref{ineq:GLSMfirstEstimation}), we infer that 
\[
\lim_{n\to+\infty}J^n_z(g^n_z)=\alpha_nP(g^n_z)+\|F^\rel g^n_z - \Phi_z^\infty\|_{\mL^2(\sphere)}^2=0.
\]
Since $(P(g^n_z))$ is bounded and $(\alpha_n)$ tends to zero, this implies that $(F^\rel g_z^n)$ converges to $\Phi^{\infty}_z$ in $\mL^2(\sphere)$.\\
\newline
Thus we have $G^\rel\psi_z=\Phi^{\infty}_z$. Now by definition of $G^\rel$, there exists a solution $w_z$ of \eqref{eq:Def_G+AB} such that $w_z^\infty = \Phi_z^\infty$. The Rellich's Lemma implies that $w_z$ satisfies the following equations,

\begin{equation}
\label{eq:RTEPbSourcePhi+AB}
\begin{array}{|rccl}
\Delta w_z + k^2 w_z &=&  0 & \text{in } \Omega\setminus\Gammabar\\
\partial_{\nu}^{\pm}w_z &=& 0 & \text{on } \Gamma\cap\Omega\\
 w_z & = & \Phi_z & \text{on } \d\Omega.  \\
\end{array}
\end{equation}
We have shown the existence of a solution $w_z\in \mH^1(\Omega\setminus\Gammabar)$ to equation \eqref{eq:RTEPbSourcePhi+AB} for all $z\in\mathcal{O}$. Now we prove that this is not compatible with the fact that $k^2$ is a RTE. Let $w_0\not\equiv0$ be an eigenfunction of \eqref{eq:RTEPb+AB}. Multiplying the first equation of \eqref{eq:RTEPbSourcePhi+AB} by $w_0$ and integrating by parts, using also that $\partial_{\nu}^{\pm}w_0 = \partial_{\nu}^{\pm}w_z = 0$ on $\Gamma\cap\Omega$, we get
\[
0=\int_{\d\Omega} \d_\nu w_0 w_z- w_0\d_\nu w_z\, ds=\int_{\d\Omega} \d_\nu w_0 w_z\, ds=\int_{\d\Omega} \d_\nu w_0 \Phi_z- w_0\d_\nu\Phi_z\, ds. 
\]
The Green's representation theorem together with this last equation imply that for all $z\in\mathcal{O}$, we have 
\begin{equation}\label{Representationw_0+AB}
w_0(z)=\int_{\Gamma\cap\Omega}[w_0]\d_\nu\Phi_z\, ds(x) - \int_{\Gamma\cap\Omega}[\d_\nu w_0]\Phi_z \, ds(x).
\end{equation}
The right hand side of \eqref{Representationw_0+AB} is an outgoing solution of the Helmholtz equation in $\R^d\setminus(\Gammabar\cap\Omega)$ which coincides with $w_0$ in $\mathcal{O}$. From the unique continuation principle, we infer that it coincides with $w_0$ in $\Om\setminus\Gammabar$. Since we have $w_0=0$ on $\partial\Om$, we deduce that the right hand side of (\ref{Representationw_0+AB}) is null. And so $w_0$ must be zero in $\mathcal{O}$. Again from the unique continuation principle, this implies $w_0=0$ in $\Om\setminus\Gammabar$ which yields a contradiction. 
\end{proof}

\begin{remark} 
\label{rk:issueComputationRTE}
Theorems \ref{th:GLSM+AB} and \ref{th:RTEsComputation+AB} suggest that RTEs correspond to values of $k^2$ for which peaks are observed in the curve 
\begin{equation}\label{def:PeaksFunc+AB}
f:k \mapsto \int_{\mathcal{O}} P(g^n_z)\,dz
\end{equation}
for large values of $n$ and any non empty open set $\mathcal{O}\subset\Omega$. Note however that rigorously the above theorems require that $k^2$ is not a Dirichlet eigenvalue of \eqref{eq:RTEPbNoCrack+AB} nor a RNSE.
\end{remark}

\subsection{Description of the identification algorithm}

Let $F(k)$, $k\in[k_m;k_M]$, be a collection of far field operators associated with the initial problem (\ref{PbChampTotalFreeSpaceIntrod}) (scattering by cracks). For a discrete set of sampling points $T\subset\R^d$ covering the probed area, we consider the collection of artificial backgrounds $(\Omega_t)_{t\in T}$ defined by $\Omega_t:=B(t,\rho)$ where $B(t,\rho)$ is the ball centered at $t$ of radius $\rho>0$. First, we compute the far field pattern $F^b(k)$ for $k\in[k_m;k_M]$. To proceed, we need to solve the scattering problem (\ref{PbChampTotalFreeSpaceComp}). But since $\Omega_t$ is a ball, this can be done analytically in a quite cheap way. Thus we have access to $F^\rel(k)=F(k)-F^b(k)$. Then for all $k\in[k_m;k_M]$, for all $z\in\Om_t$, we approximate the solution of the far field equation $F^\rel(k)g = \Phi_z^\infty$ by computing $g_z^n(k)$ (see \eqref{def:GLSMconvergenceCondition+AB}) for small values of $\alpha_n>0$. Plotting the curve 
\begin{equation}\label{EigCurve}
\mathcal{E}^n_t : k \mapsto \int_{\Omega_t} P(g^n_z(k))\,dz
\end{equation}
and identifying the peaks as $n\to+\infty$ allows us to get the spectrum of Problem (\ref{eq:RTEPb+AB}). Below, we only compare the list of the  eigenvalues of (\ref{eq:RTEPb+AB}) contained in $[k^2_m;k^2_M]$ with the list of eigenvalues of (\ref{eq:RTEPbNoCrack+AB}) contained in the same interval. We denote them respectively by $\sigma_{\Gamma}(\Om_t)$ and $\sigma_{\emptyset}(\Om_t)$. Note that the reference list of eigenvalues $\sigma_{\emptyset}(\Om_t)$ is independent of $t$ and can be computed once for all. However it depends on $\rho$ and it is better to choose $k_m$, $k_M$ such that $\sigma_{\emptyset}(\Om_t)\ne\emptyset$. Then we have the following result 
\[
\sigma_{\Gamma}(\Om_t)\ne\sigma_{\emptyset}(\Om_t) \qquad \Rightarrow \qquad \Gamma\cap\Omega_t\neq\emptyset.
\]
This criteria can be implemented in practice by defining
\begin{equation}
\label{MultFrqCrackCriteria}
d(\sigma_\Gamma(\Omega_t),\sigma_\emptyset(\Omega_t)):=\max_{k^2_\Gamma\in\sigma_\Gamma(\Omega_t)}\min_{k^2_\emptyset\in\sigma_\emptyset(\Omega_t)}|k^2_\Gamma-k^2_\emptyset|+\max_{k^2_\emptyset\in\sigma_\emptyset(\Omega_t)}\min_{k^2_\Gamma\in\sigma_\Gamma(\Omega_t)}|k^2_\Gamma-k^2_\emptyset|.
\end{equation}
Then we have $d(\sigma_\Gamma(\Omega_t),\sigma_\emptyset(\Omega_t))=0$ if and only if $\sigma_{\Gamma}(\Om_t) = \sigma_{\emptyset}(\Om_t)$. However, this result is not completely satisfactory because we may have $\Gamma\cap\Omega_t\neq\emptyset$ and $\sigma_{\Gamma}(\Om_t) = \sigma_{\emptyset}(\Om_t)$ (in which case the possibility to compute the elements of $\sigma_{\Gamma}(\Om_t)$ is not justified by the theory, see Remark \ref{rk:issueComputationRTE}). 
In order to get a more robust indicator, we can average it as follows. Instead of simply defining $d(\sigma_\Gamma(\Omega_t),\sigma_\emptyset(\Omega_t))$ as the crack density at point $t$, we also involve nearby artificial disks by defining
\begin{equation}\label{IndicatorFunction}
\mathcal{I}(t):=\cfrac{1}{\mrm{card}\{s\in T\,|\, |t-s| \le\eta\}}\sum_{\substack{s\in T \\ |t-s| \le\eta}}d(\sigma_\Gamma(\Omega_s),\sigma_\emptyset(\Omega_s)).
\end{equation}
Here the parameter $\eta$ fixes the amount of disks which are taken into account.
With this indicator, we have more chances to obtain $\mathcal{I}(t)\ne0$ if and only if cracks are present in the set $W_t$ defined by 
$$W_t := \dsp\bigcup_{\substack{s\in T \\ |t-s| \le\eta}}\Omega_s.$$
 Besides, since the eigenvalues $k_{\Gamma}$ of \eqref{eq:RTEPb+AB} decreases with respect to the size of $\Gamma\cap\Om$ for the inclusion order, we also expect that $\mathcal{I}(t)$ gives a qualitative information on the density of cracks inside $W_t$ (the more cracks in $W_t$, the higher $\mathcal{I}(t)$). However this remains an intuition and this result seems hard to establish theoretically. On the other hand, we emphasize that in practice, we take a large but fixed $n$ in (\ref{EigCurve}). Therefore, in the numerics, we work with an approximation of $\mathcal{I}(t)$ that we denote
\begin{equation}\label{IndicatorFunctionDiscrete}
\mathcal{I}^n(t).
\end{equation}

\subsection{Numerical validation of the algorithm}\label{ParagrapheNum}

We conclude this section by testing the indicator function (\ref{IndicatorFunctionDiscrete}) in a two dimensional setting. For a given wavenumber $k$, we generate a discretization of the far field operator $F$ by solving numerically the direct problem \eqref{PbChampTotalFreeSpaceIntrod} for multiple incident fields $u_i(\cdot,\theta_p)=e^{-ik\theta_p\cdot x}$. To proceed, we use a boundary element method working with the solver gypsilab \cite{alouges2018fem}. We then compute the matrix $F=(u_s^\infty(\theta_q,\theta_p))_{p,q}$  for $\theta_p$, $\theta_q$ in $\{\cos(2\pi l/100),\,\sin(2\pi l/100),\,l=1\dots 100\}$ (somehow we discretize $\mL^2(\mathbb{S}^1)$). We then add random noise to the simulated $F$ and obtain a noisy far field data $F^\delta$ such that $F^\delta_{pq}=F_{pq}(1+\gamma N)$. Here $N$ is a complex random variable whose real and imaginary parts are uniformly distributed in $ [-1;1]^2$. The parameter $\gamma>0$ is chosen so that $\|F^\delta-F\|\leq\delta$. We repeat this process for multiple wavenumbers $k\in[k_{m};k_{M}]$ to obtain a collection of noisy far field data $(F^\delta(k))_{k\in[k_{m};k_{M}]}$. In what follows, we do not write the dependence with respect to the wavenumber to have a lighter notation.\\
\newline
We work with artificial obstacles which are balls of radius $\rho$ centered at $t$. In this case, we have an analytic expression of the far field pattern of the scattered field associated with an incident plane wave for the problem (\ref{PbChampTotalFreeSpaceComp}). For $\Om_t=\Om_0$ (the ball of radius $\rho$ centered at the origin), the asymptotic behavior of the Hankel functions, we find
\begin{equation}\label{DefScaAnalytic}
u_s^\infty(\theta_q,\theta_p) = e^{-i\pi/4}\sqrt{\frac{2}{\pi k}}\sum_{m\in\Z} -\frac{J_m(k\rho)}{H_m(k\rho)} e^{im(\theta_q-\theta_p)}=:\tilde{u}_s^\infty(\theta_q,\theta_p).
\end{equation}
From the translation formula (see \cite[Identity (5.3)]{Colton-Kress}), the solution $\Omega_t$ for a generic $t\in\R^2$, has the following far field pattern
\[
u_s^\infty(\theta_q,\theta_p) =e^{ikt\cdot(\theta_p - \theta_q)} \tilde{u}_s^\infty(\theta_q,\theta_p).
\]
Consequently, if we define the two matrices $T$ and $F^b$ by 
\[
T_{pq}  = e^{ikt\cdot(\theta_p - \theta_q)} \qquad \text{ and } \qquad F^b_{pq} = \tilde{u}_s^\infty(\theta_q,\theta_p),
\]
then the relative far field operators $F^\rel_t$ are given by
\begin{equation}\label{eq:ComputeFrel}
F^\rel_t = F^\delta -F^b_t,
\end{equation}
where $F^b_t$ is the component wise multiplication of $T$ with $F^b$. \\
\newline
To handle the noise added on the data, we work with a regularized version of the cost function $J_z^n$ introduced in \eqref{def:cost_function+AB}. It consists in finding minimizers $g_z^{n,\delta}(t)$ of the functional $J_z^{n,\delta}(g,t)$ such that
\begin{equation}\label{def:RegCostFunc+AB}
J_z^{n,\delta}(g,t)=  \alpha_n\left(P^\delta(g,t) +\delta\|g\|_{\mL^2(\mathbb{S}^{1})}^2\right)+ \|F^\rel_t g - \Phi_z^\infty\|_{\mL^2(\mathbb{S}^{1})}^2,
 \end{equation}
where $P^\delta(g,t) = \langle F^\delta_\sharp g,g\rangle + \langle (F^b_t)^\sharp g,g\rangle$. Following \cite[Section 5.2]{GLSM}, we fit $\alpha_n$ to $\delta$ as follows, 
\begin{equation}
\alpha_n(\delta,t) = \frac{\alpha^n_{\mrm{LSM}}(t)}{\|F^{\delta}_\sharp\|+\|(F^b_t)^\sharp\|+\delta}
\end{equation}
where $\alpha^n_{\mrm{LSM}}(k,t)$ is the regularization parameter given by the Morozov discrepancy principle in the Tikhonov regularization of the equation $F^\rel_t g = \Phi_z^\infty$. \\
\newline
From the computed $g_z^{n,\delta}(k,t)$, we identify the RTEs $k^2_\Gamma$ as described in the previous paragraph. We first illustrate the indicator function $\mathcal{I}^n(t)$  introduced in \eqref{IndicatorFunctionDiscrete} on a neat example (Figure \ref{Fig:DemoMult}). We also present the plot of the curve $\mathcal{E}^n_t$ defined by \eqref{EigCurve} for two different artificial disks $\Omega_{t_1}$ (Figure \ref{Fig:MultSeepNoCrack}) and $\Omega_{t_2}$ (Figure \ref{Fig:MultSeepCrack}) so one can appreciate how the first eigenvalue of \eqref{eq:RTEPb+AB}, determined by the first peak of the curve $\mathcal{E}^n_t$, deviates from the first eigenvalue of  \eqref{eq:RTEPbNoCrack+AB} when $\Omega_t$ intersects the crack. In Figure \ref{Fig:DemoMultHR}, we decrease the artificial disks radius to $\rho = 0.1$ to recover the crack with a better resolution.

\begin{figure}[!ht]
\centering
\includegraphics[height=5cm,trim = 14cm 2.5cm 13cm 2cm, clip]{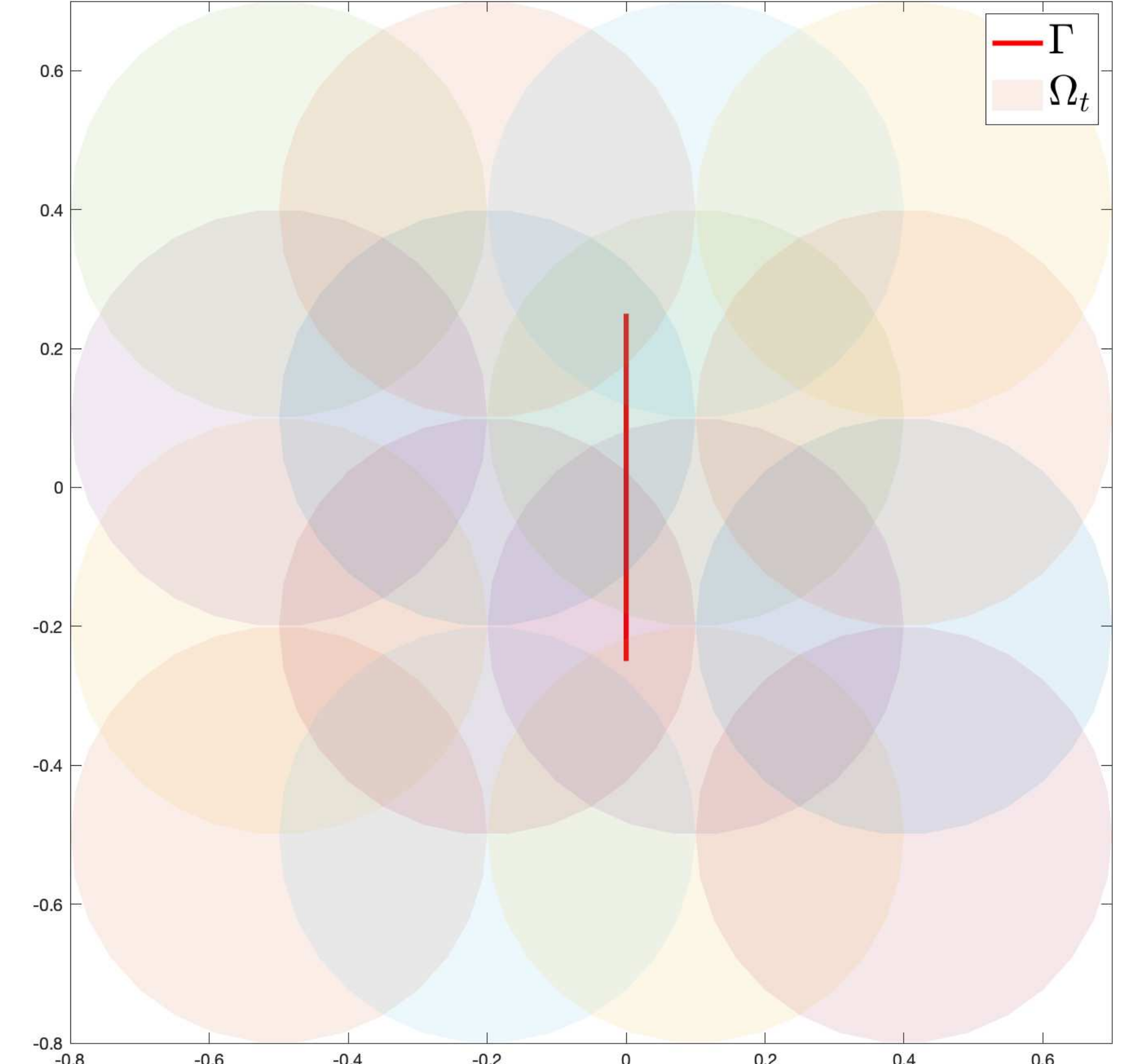}\qquad\qquad
\includegraphics[height=5cm]{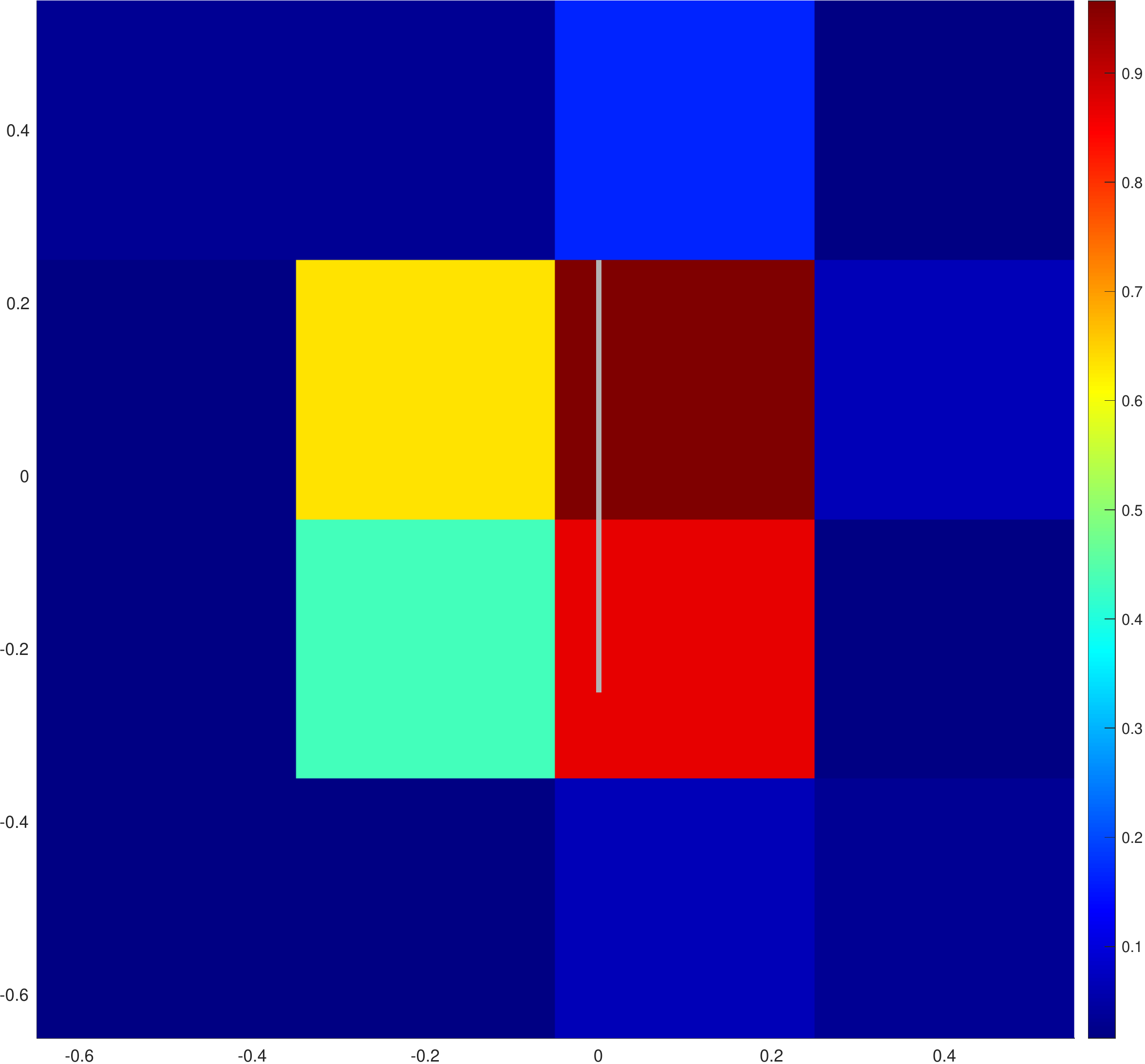}    
\caption{Left: collection of artificial disks of radius $\rho = 0.3$ used to identify a single crack of length 0.5. Right: results of the reconstruction provided by the indicator $\mathcal{I}^n$ (with the parameter $\eta = 0$).}                                                                             
\label{Fig:DemoMult}
\end{figure}

\begin{figure}[!ht]
\centering
\includegraphics[height=5cm]{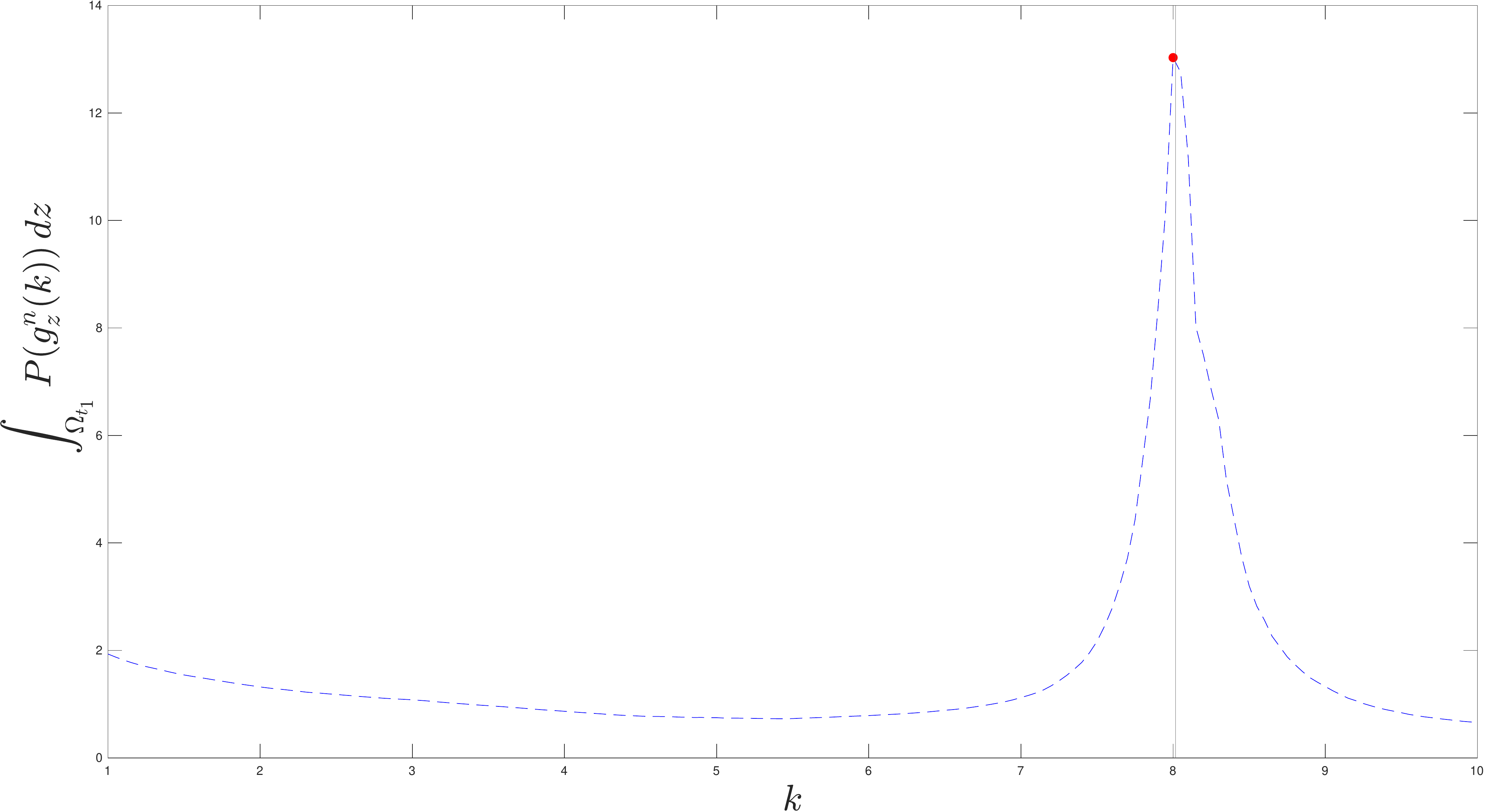}\qquad
\raisebox{0.25cm}{\includegraphics[height=4.7cm,trim = 14cm 2.5cm 13cm 2cm, clip]{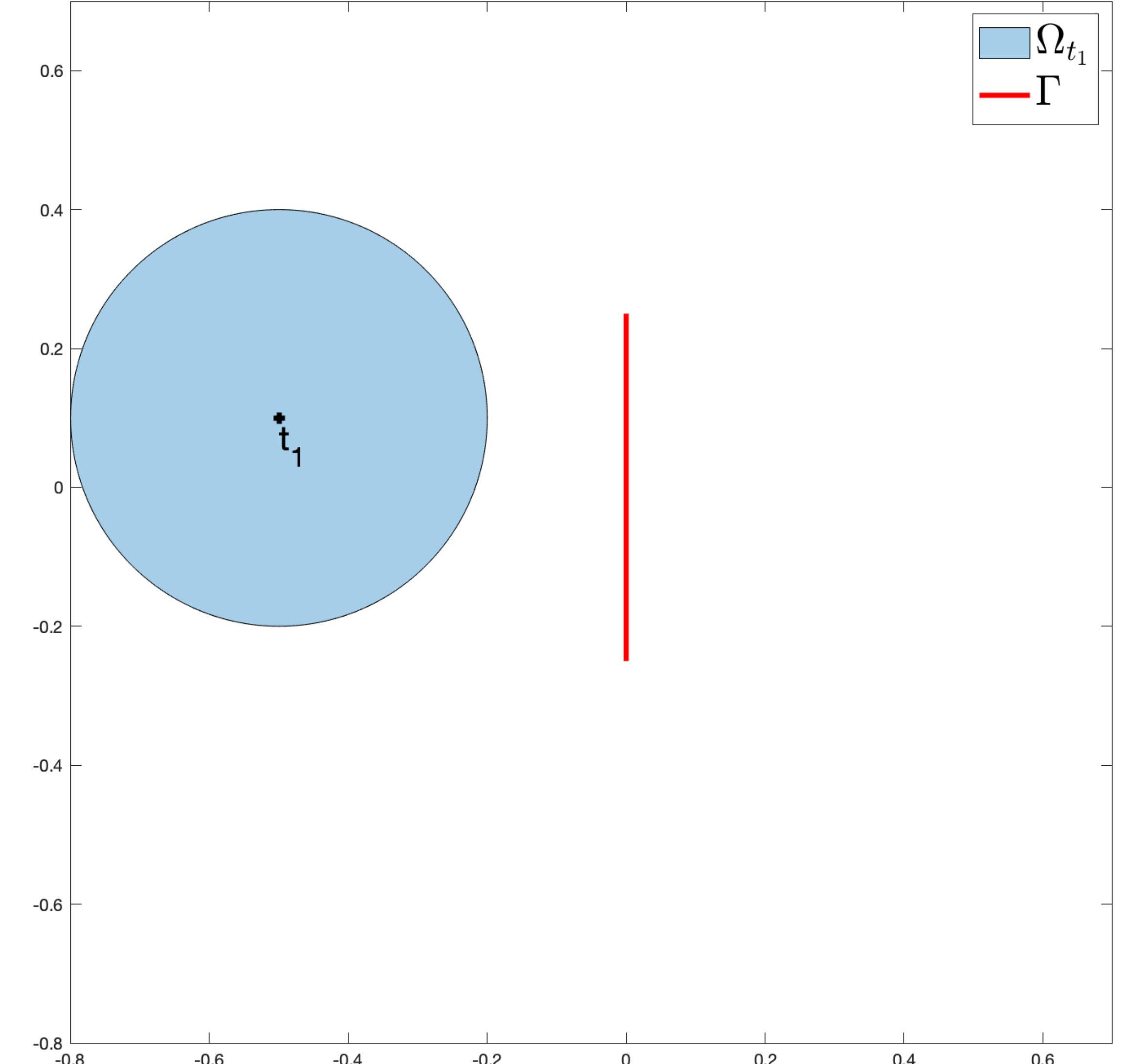}}            
\caption{Plot of the curve $\mathcal{E}^n_{t_1}$ (left) defined by \eqref{EigCurve} computed with the artificial background $\Omega_{t_1}$ represented on right. The peak of the curve is reached at $k_{\Gamma}\approx k_{\emptyset}$,  the latter quantity being indicated by the vertical line. Consequently, it is obtained that $\mathcal{I}^n(t_1) = |k^2_{\Gamma} - k_{\emptyset}^2|\approx0$.}                                                       
\label{Fig:MultSeepNoCrack}
\end{figure}

\begin{figure}[!ht]
\centering
\includegraphics[height=5cm]{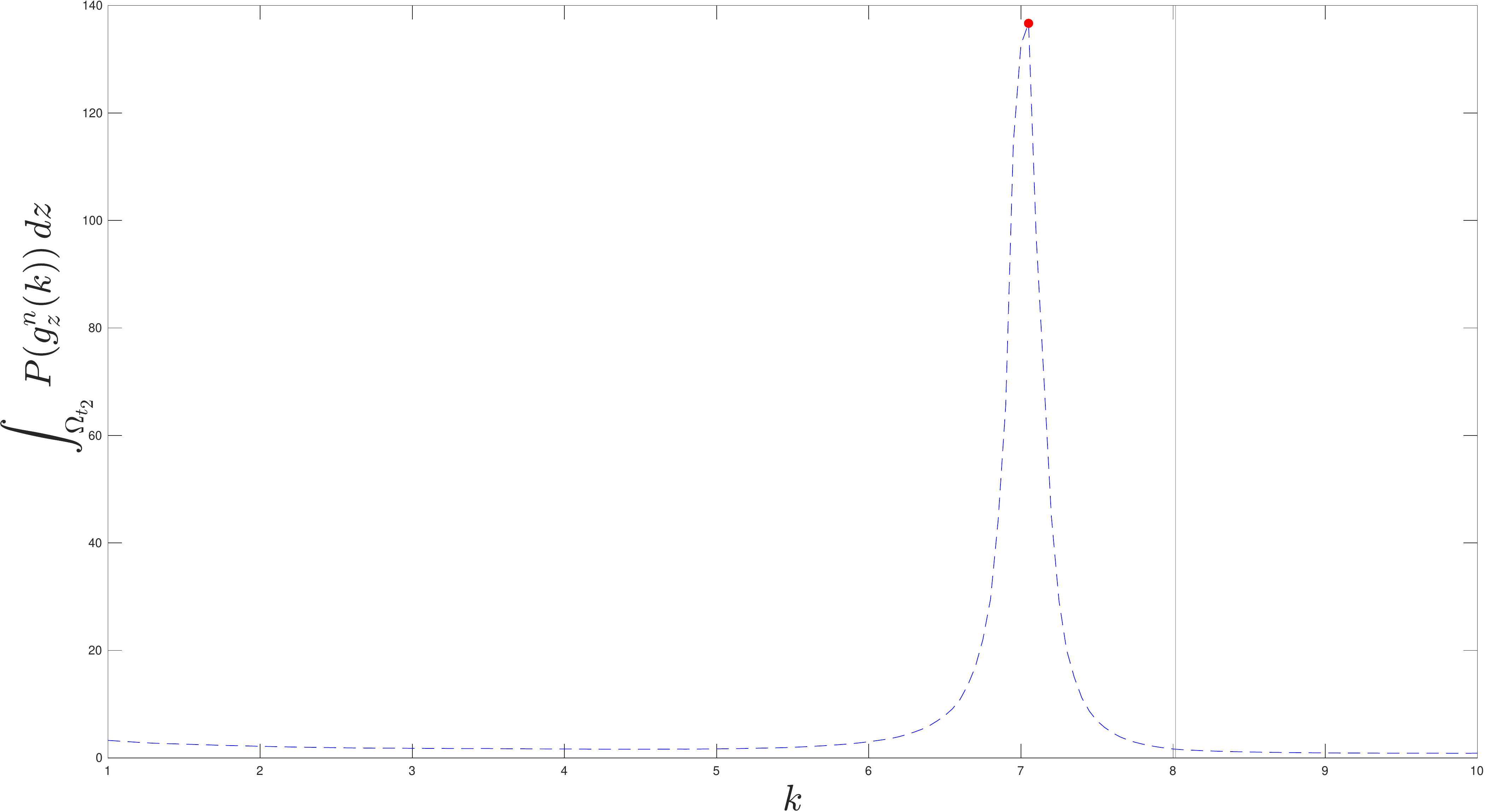}\qquad
\raisebox{0.25cm}{\includegraphics[height=4.7cm,trim = 14cm 2.5cm 13cm 2cm, clip]{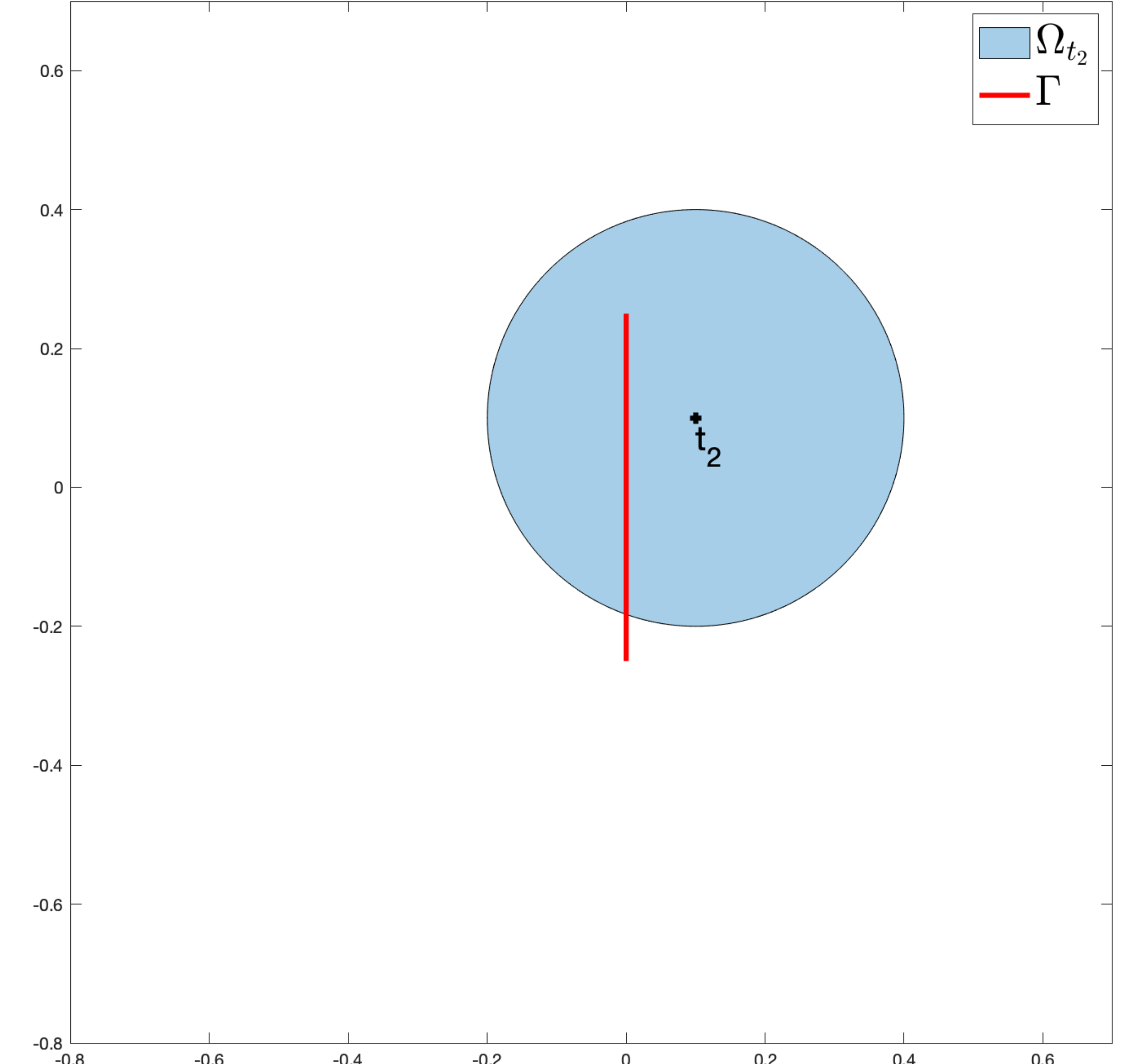}}
\caption{Plot of the curve $\mathcal{E}^n_{t_2}$ (left) defined by \eqref{EigCurve} computed with the artificial background $\Omega_{t_2}$ represented on right. The peak of the curve is reached at $k_{\Gamma}< k_{\emptyset}$,  the latter quantity being indicated by the vertical line. Consequently, we obtain that $\mathcal{I}^n(t_2) = |k^2_{\Gamma} - k_{\emptyset}^2|\ne0$.}                                                      
\label{Fig:MultSeepCrack}
\end{figure}

\begin{figure}[!ht]
\centering
\includegraphics[height=5cm]{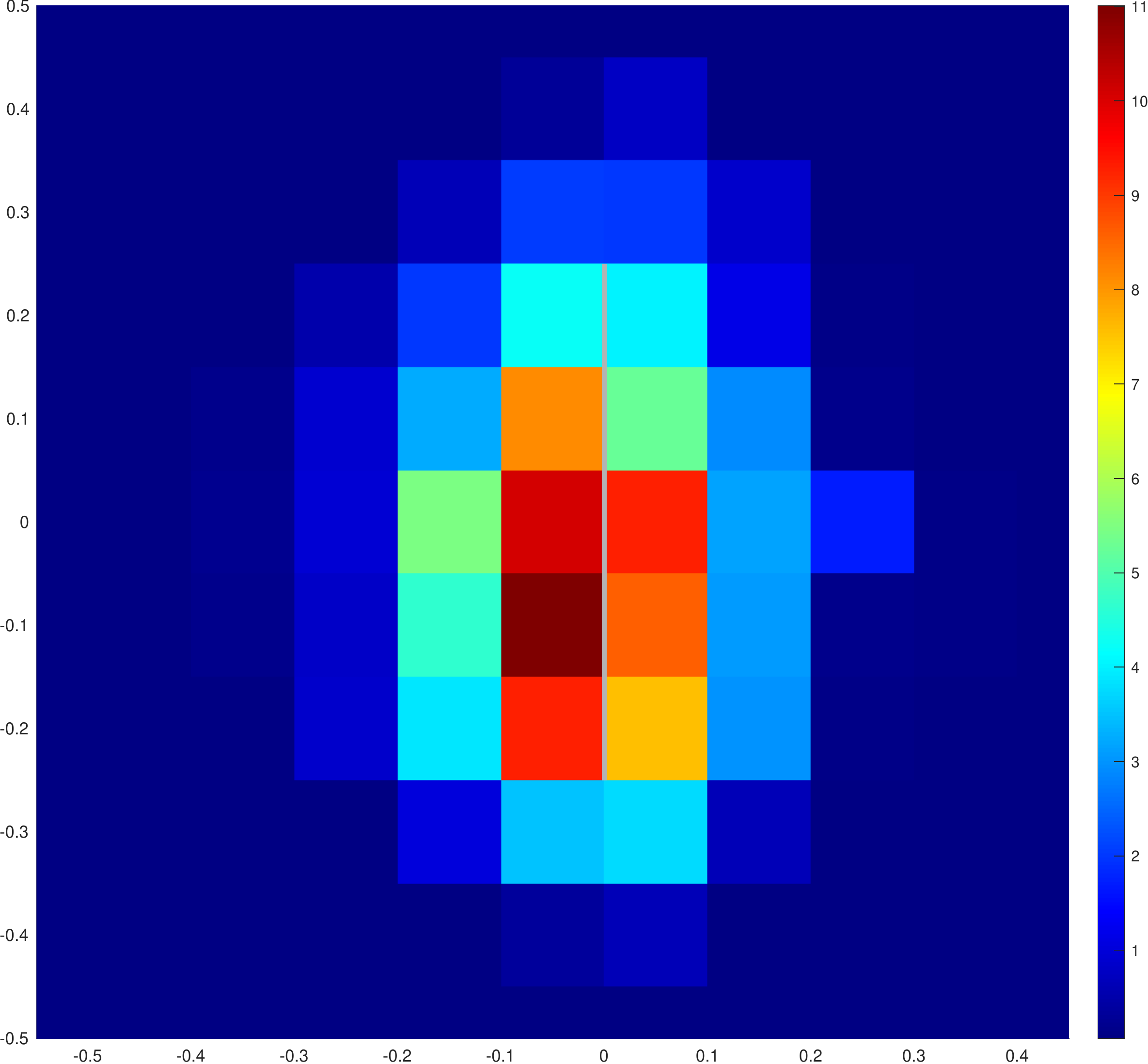}
\caption{Results of the reconstruction provided by the indicator $\mathcal{I}^n$ to recover the vertical crack of Figure \ref{Fig:DemoMult}. The method is carried with artificial disks of radius $\rho = 0.1$. The regularization parameter of the indicator is set to $\eta = \rho$.}                                                                             
\label{Fig:DemoMultHR}
\end{figure}

\newpage

\noindent Finally, we implement the method to two less academic situations in Figure \ref{fig:MultiFrq}. It gives satisfactory results in the sense that it allows us to quantify the damage level of the material. However, this method is quite expensive in computations to obtain a better resolution of the image. This is why we propose another approach in the next section, which requires only measurements at one fixed wavenumber and whose implementation is less costly.

\begin{figure}[!ht]
\centering
\includegraphics[height=5cm]{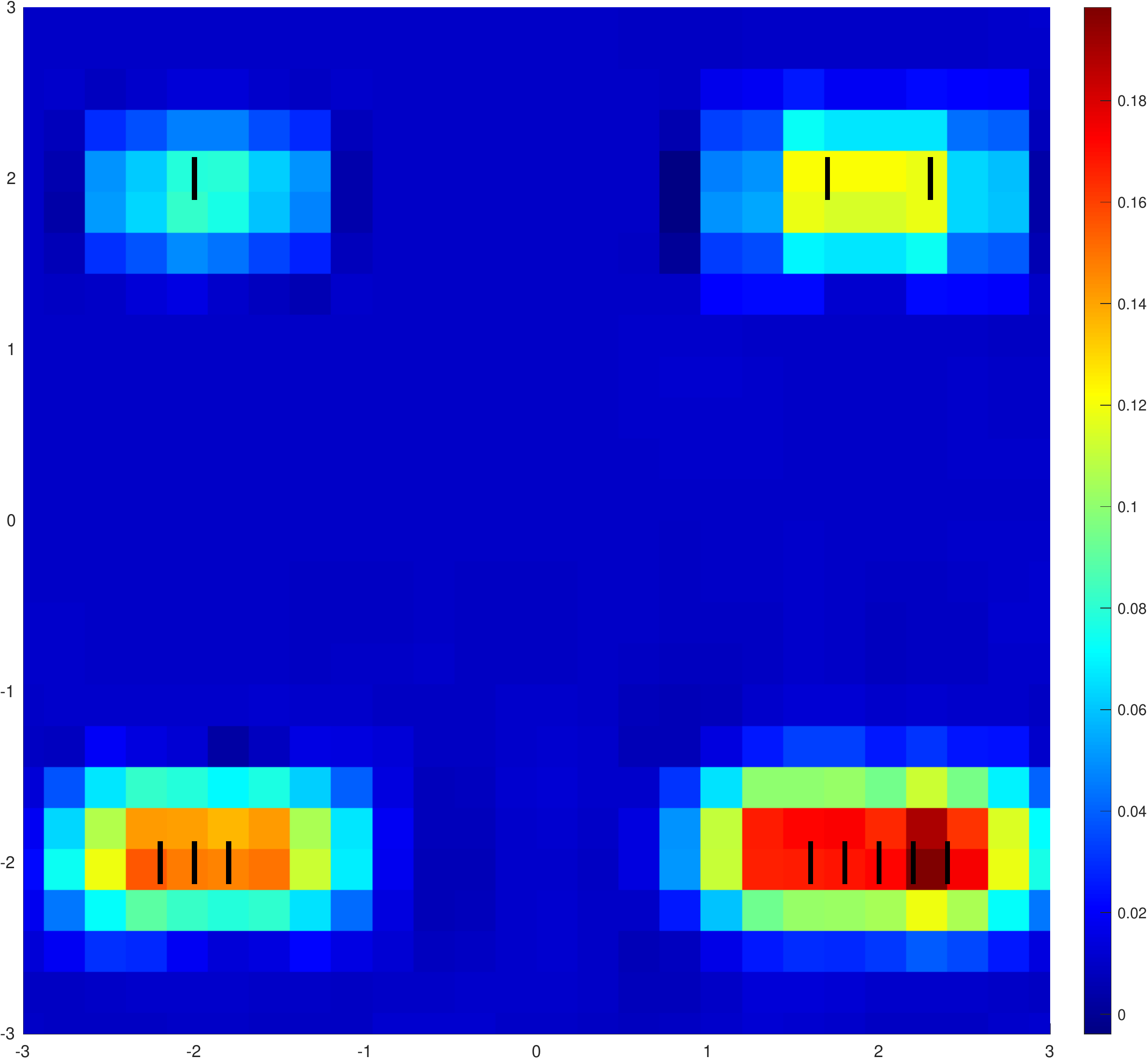}\qquad
\includegraphics[height=5cm]{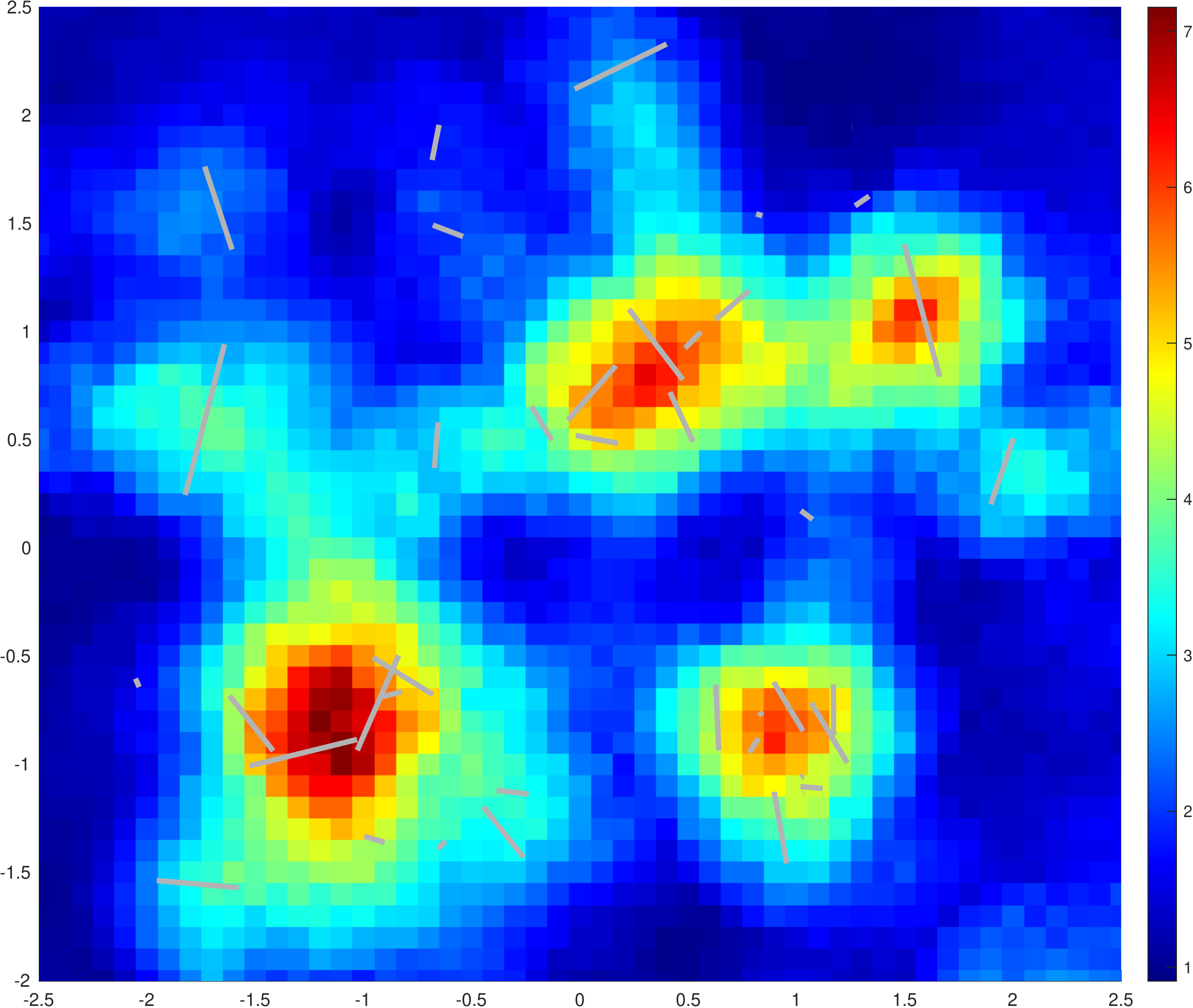}
\caption{Indicator $\mathcal{I}^n$ for two different simulated damaged materials. Left: 11 vertical cracks of length 0.25 arranged in  4 areas with different damage levels. The radius of the artificial obstacles  is $\rho = 0.25$. Right: 40 cracks of different lengths arranged randomly. The radius of the artificial obstacles is  $\rho=0.1$. The data is corrupted with $1\%$ of noise.} 
\label{fig:MultiFrq}
\end{figure}

\section{An alternative method using measurements at a fixed frequency}\label{SectionSecondApproach}

In this section, we develop an alternative method to detect the presence of cracks, the difference with the previous section being that the present approach requires far field data at only one fixed wavenumber. It is inspired by the Differential Linear Sampling Method introduced in \cite{DLSM} which allows to an observer having data before and after the emergence of a defect to get an idea where the damage has appeared. Here, instead of comparing far field data before and after the emergence of a defect, we will compare measured data and artificially computed data. The new proposed indicator function is expected to be sensitive to the local density of cracks but there is no theoretical justification for the quantification of this density (no monotonicity property is proved with respect to the size of the network). However, in addition to be much less costly, numerical simulations suggest that it may provide better results than the previous algorithm when the network is relatively sparse (see the numerical section \ref{ParagNum}).

\subsection{Approximation of the solution of the far field equation}

The proof of Proposition \ref{Prop:LSM+AB} guarantees that when $k^2$ is not a RTE, for $z\in\Om$, the equation $G^\rel\psi = \Phi_z^\infty$ admits a unique solution 
\begin{equation}\label{DefPsi}
\psi=(\psi_1,\psi_2)\qquad\mbox{ with }\qquad \psi_1 = \partial_\nu(w-\Phi_z)|_{\partial\Om}\qquad\mbox{ and }\qquad\psi_2 = -\partial_{\nu}\Phi_z|_{\Gamma\cap\Omega^c}.
\end{equation}
Here $w\in\mH^1(\Om\setminus\Gammabar)$ is the function such that
\begin{equation}\label{eq:wintEquation+AB}
\begin{array}{|rccl}
\Delta w + k^2 w &=&  0 & \text{in } \Omega\setminus\Gammabar\\
\partial_{\nu}^{\pm}w &=& 0 & \text{on } \Gamma\cap\Omega\\
 w & = & \Phi_z & \text{on } \d\Omega. 
\end{array}
\end{equation}
The following theorem shows how one can approximate $\psi$, and so $\partial_{\nu}w$ on $\partial\Om$, by means of the sequence of minimizers of the functional $J^n_z$ defined in (\ref{def:cost_function+AB}). This is interesting because $\partial_{\nu}w|_{\partial\Om}$ contains information on the cracks located inside $\Om$ that we will exploit in a second step.
\begin{theorem}\label{th:Converge2FFSolution+AB}
Assume that $k^2$ is not a RTE nor a Dirichlet eigenvalue of problem \eqref{eq:RTEPbNoCrack+AB}. Assume also that $\mathcal{H}_\Gamma : \mL^2(\dsphere) \to \mH^{-1/2}(\Gamma) 
$ is injective. Then for all $z\in\Omega$, there is a unique $\psi\in\mY$ such that $G^\rel\psi = \Phi_z^\infty$. Furthermore the sequence $(g_z^n)$ defined by \eqref{def:GLSMconvergenceCondition+AB} is such that $(H^{\rel} g_z^n)$ converges strongly to $\psi$ (defined in \eqref{DefPsi}) in $\mY$.
 \end{theorem}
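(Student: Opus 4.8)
The plan is to combine the uniqueness statement (already contained in the proof of Proposition 3.11, i.e. \texttt{Prop:LSM+AB}) with the variational machinery developed in Theorems \ref{th:GLSM+AB} and \ref{th:RTEsComputation+AB}, pushing the arguments from weak to strong convergence. First I would record that existence and uniqueness of $\psi\in\mY$ with $G^\rel\psi=\Phi_z^\infty$ follows directly from Proposition \ref{Prop:LSM+AB}: existence was shown there by solving \eqref{eq:wintEquation+AB} and extending by $\Phi_z$, while uniqueness follows since a nontrivial element of $\ker G^\rel$ would produce, via Rellich and the hypothesis that $k^2$ is not a RTE, a nonzero solution of the homogeneous interior problem \eqref{eq:RTEPb+AB}, a contradiction. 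This identifies the limit candidate as exactly the $\psi$ in \eqref{DefPsi}.

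Next I would establish weak convergence. Since $k^2$ is not a RTE, Theorem \ref{th:GLSM+AB} guarantees that $(P(g_z^n))$ is bounded, and Lemma \ref{Lem:Penalization+AB} then gives that $(H^\rel g_z^n)$ is bounded in $\mY$; hence along a subsequence $H^\rel g_z^n \rightharpoonup \tilde\psi$ weakly in $\mY$. By compactness of $G^\rel$ (Proposition \ref{PropCompactness}), $F^\rel g_z^n = G^\rel H^\rel g_z^n \to G^\rel\tilde\psi$ strongly. On the other hand, the density argument of Theorem \ref{th:RTEsComputation+AB} (valid here because $k^2$ is not a RTE, so in particular the relevant non-scattering obstruction is absent) shows $\lim_n j_z^n = 0$, whence $\|F^\rel g_z^n - \Phi_z^\infty\|_{\mL^2(\sphere)}\to 0$; this forces $G^\rel\tilde\psi=\Phi_z^\infty$, so by uniqueness $\tilde\psi=\psi$, and the whole sequence converges weakly.

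The main obstacle, and the real content of the theorem, is upgrading weak convergence of $(H^\rel g_z^n)$ to strong convergence. The standard device here is to show convergence of norms together with weak convergence in a Hilbert space. I would exploit the penalization term: from $J_z^n(g_z^n)\le j_z^n+\alpha_n^\eta$ and $\lim_n j_z^n=0$ one deduces, after dividing by $\alpha_n$, a control of the form $\limsup_n P(g_z^n)\le$ (something matching the limit functional evaluated at $\psi$). Concretely, using Lemma \ref{Lem:Penalization+AB} the quantity $P(g_z^n)$ is equivalent to $\|H^\rel g_z^n\|_{\mY}^2$, so I would aim to prove $\limsup_n \|H^\rel g_z^n\|_{\mY}\le\|\psi\|_{\mY}$. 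Combined with the weak lower semicontinuity of the norm, $\|\psi\|_{\mY}\le\liminf_n\|H^\rel g_z^n\|_{\mY}$, this yields convergence of norms, and therefore strong convergence in the Hilbert space $\mY$.

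To carry out that $\limsup$ estimate I would use the recovery sequence $(\psi_n)$ built in the proof of Theorem \ref{th:GLSM+AB} via density of the range of $H^\rel$ (Proposition \ref{PropDenseRange}), for which $H^\rel\psi_n\to\psi$ strongly and $P(\psi_n)\to$ the corresponding limit. Testing the near-minimality of $g_z^n$ against $\psi_n$ gives $\alpha_n P(g_z^n)\le J_z^n(\psi_n)+\alpha_n^\eta$, and dividing by $\alpha_n$ together with $\eta>1$ transfers the asymptotic bound $P(g_z^n)\lesssim P(\psi_n)+o(1)$, so $\limsup_n P(g_z^n)\le \lim_n P(\psi_n)$. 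Translating through the norm equivalence of Lemma \ref{Lem:Penalization+AB} and the strong convergence $H^\rel\psi_n\to\psi$ delivers exactly $\limsup_n\|H^\rel g_z^n\|_{\mY}\le\|\psi\|_{\mY}$. The delicate point to watch is that the equivalence constants in \eqref{penalization-equiv} are two-sided and $g$-independent, so no spurious gap opens between the lower semicontinuity bound and the $\limsup$ bound; once the two meet, strong convergence follows and the proof is complete.
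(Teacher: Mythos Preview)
Your overall architecture---existence/uniqueness from Proposition \ref{Prop:LSM+AB}, boundedness from Theorem \ref{th:GLSM+AB}, weak convergence via compactness of $G^\rel$, identification of the limit, and then a $\limsup$/weak-lsc argument for strong convergence---is the same as the paper's. The gap is in the very last step, precisely at the place you flag as ``delicate''.

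You claim that from $\limsup_n P(g_z^n)\le \lim_n P(\psi_n)$ and the norm equivalence \eqref{penalization-equiv} you obtain $\limsup_n\|H^\rel g_z^n\|_{\mY}\le\|\psi\|_{\mY}$. This does not follow. The equivalence $c'\|H^\rel g\|_{\mY}^2\le P(g)\le C'\|H^\rel g\|_{\mY}^2$ only yields
\[
c'\,\limsup_n\|H^\rel g_z^n\|_{\mY}^2\ \le\ \limsup_n P(g_z^n)\ \le\ \lim_n P(\psi_n)\ \le\ C'\,\|\psi\|_{\mY}^2,
\]
hence $\limsup_n\|H^\rel g_z^n\|_{\mY}\le \sqrt{C'/c'}\,\|\psi\|_{\mY}$. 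Unless $c'=C'$, this leaves a genuine gap between the weak-lsc lower bound $\|\psi\|_{\mY}\le\liminf_n\|H^\rel g_z^n\|_{\mY}$ and your upper bound, so norm convergence in $\mY$ is \emph{not} established. The two-sidedness and $g$-independence of the constants do not help here; what is needed is that the functional controlling the $\limsup$ is the \emph{same} quadratic form that is weakly lower semicontinuous, with no constants lost in translation.

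The paper avoids this by not passing through $\|\cdot\|_{\mY}$ at all. It uses the \emph{exact} identity $P(g)=\langle H_{\partial\Omega}g,T_{\partial\Omega}H_{\partial\Omega}g\rangle_{\partial\Omega}+\langle \mathcal{H}_\Gamma g,\mathcal{T}_\Gamma \mathcal{H}_\Gamma g\rangle_{\Gamma}$ and the equivalence \eqref{equivH} to transfer weak/strong convergence between $H^\rel g_z^n$ and the pair $(H_{\partial\Omega}g_z^n,\mathcal{H}_\Gamma g_z^n)$. The recovery sequence gives the sharp bound $\limsup_n P(g_z^n)\le \langle\psi_1,T_{\partial\Omega}\psi_1\rangle_{\partial\Omega}+\langle\psi_3,\mathcal{T}_\Gamma\psi_3\rangle_{\Gamma}$ (no constants), and then the \emph{coercivity} of $T_{\partial\Omega}$ and $\mathcal{T}_\Gamma$ is applied directly to $H_{\partial\Omega}g_z^n-\psi_1$ and $\mathcal{H}_\Gamma g_z^n-\psi_3$, expanding the quadratic forms and using weak convergence of the cross terms. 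This gives strong convergence of the pair, and \eqref{equivH} then transports it back to strong convergence of $H^\rel g_z^n$ in $\mY$. If you want to salvage your route, you would need to argue that $P$ itself defines (via the linear isomorphism of \eqref{equivH}) a coercive quadratic form on $\mY$ and run the weak-plus-norm argument in \emph{that} inner product, not in $\|\cdot\|_{\mY}$.
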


\begin{proof}
The proof of Theorem \ref{th:GLSM+AB} guarantees that the sequence $(P(g_z^n))$ remains bounded as $n\to+\infty$. And the beginning of the proof of Theorem \ref{th:RTEsComputation+AB} ensures that we can extract a subsequence of $(g_z^n)$, also denoted $(g_z^n)$, such that $(H^\rel g_z^n)$ converges weakly to some $\psi\in\mY$. Moreover, we also know that $(G^\rel H^\rel g_z^n)$ converges to $\Phi^{\infty}_z$ (see again the proof of Theorem \ref{th:RTEsComputation+AB}). Thus we have $G^\rel\psi=\Phi^{\infty}_z$ and we deduce that $\psi$ is as in (\ref{DefPsi}). We now show the strong convergence of $(H^{\rel}g_z^n)$ to $\psi$ in $\mY$.\\
\newline 
Since the operator $H^\rel: \mL^2(\dsphere) \rightarrow \mY$ has dense range (Proposition \ref{PropDenseRange} in Appendix), there is a sequence $(g_p)_{p}$ of elements of $\mL^2(\dsphere)$ such that $(H^{\rel}g_p)$ converges to $\psi$ in $\mY$. From estimates (\ref{equivH}) of Lemma \ref{Lem:Penalization+AB}, we deduce that $(H_{\partial\Omega} g_p)_{p}$ and  $(\mathcal{H}_{\Gamma}g_p)_{p}$ converge respectively to $\psi_1\in\mH^{-1/2}(\partial\Om)$ and some $\psi_3\in\mH^{-1/2}(\Gamma)$. By definition of $j_z^n$, for all $p\ge1$, we have $j_z^n \leq J^n_z(g_p)$. Taking the limit $p\to+\infty$ and using that $G^{\rel}\psi=\Phi_z^\infty$, we obtain 
\[
j_z^n \leq \alpha_n\langle \psi_1 ,T_{\partial\Om} \psi_1\rangle_{\partial\Om} +\alpha_n \langle \psi_3,\mathcal{T}_\Gamma \psi_3\rangle_{\Gamma}.
\]
From the definition of $g_z^n$ (see \eqref{def:GLSMconvergenceCondition+AB}), we deduce that
\begin{equation} \label{limsup+AB}
\limsup_{n\to+\infty} \ \langle H_{\partial\Omega} g^n_z,T_{\partial\Om} H_{\partial\Omega}g^n_z\rangle_{\partial\Om} + \langle \mathcal{H}_{\Gamma}g^n_z,\mathcal{T}_\Gamma \mathcal{H}_{\Gamma}g^n_z\rangle_{\Gamma}
  \leq \langle \psi_1 ,T_{\partial\Om} \psi_1\rangle_{\partial\Om} +\langle \psi_3,\mathcal{T}_\Gamma \psi_3\rangle_{\Gamma}.
\end{equation}
Thanks to the coercivity properties of $T_{\partial\Om}$, $\mathcal{T}_{\Gamma}$, we can write
\begin{equation}\label{ineq:Estimation1CV+AB}
\begin{array}{ll}
& c\,\|H_{\d\Omega}g_z^n - \psi_1\|^2_{\mH^{-1/2}(\partial\Om)} \\[3pt]
 &\leq \langle H_{\partial\Omega} g^n_z-\psi_1,T_{\partial\Om} (H_{\partial\Omega}g^n_z-\psi_1)\rangle_{\partial\Om} \\[3pt]
& \leq \langle H_{\partial\Omega} g^n_z,T_{\partial\Om}H_{\partial\Omega}g^n_z\rangle_{\partial\Om}-\langle H_{\partial\Omega} g^n_z,T_{\partial\Om}\psi_1\rangle_{\partial\Om} - \langle \psi_1,T_{\partial\Om} (H_{\partial\Omega}g^n_z-\psi_1)\rangle_{\partial\Om} ,
 \end{array}
 \end{equation}
and similarly, 
 \begin{equation}\label{ineq:Estimation2CV+AB}
 \begin{array}{ll}
 & c\, \|\mathcal{H}_\Gamma g_z^n - \psi_3\|^2_{\mH^{-1/2}(\Gamma)} \\[3pt]
\le &  \langle \mathcal{H}_\Gamma g^n_z,\mathcal{T}_{\Gamma}\mathcal{H}_\Gamma g^n_z\rangle_{\Gamma}-\langle \mathcal{H}_\Gamma g^n_z,\mathcal{T}_{\Gamma}\psi_3\rangle_{\Gamma} - \langle \psi_3,\mathcal{T}_{\Gamma} (\mathcal{H}_\Gamma g^n_z-\psi_3)\rangle_{\Gamma}.
\end{array}
 \end{equation}
Adding  \eqref{ineq:Estimation1CV+AB} to \eqref{ineq:Estimation2CV+AB} and using \eqref{limsup+AB}, we deduce that $(H_{\partial\Omega}g^n_z)$ (resp. $(\mathcal{H}_{\Gamma}g^n_z)$)  converges strongly to $\psi_1$ (resp. $\psi_3$) in $\mH^{-1/2}(\partial\Om)$ (resp. $\mH^{-1/2}(\Gamma)$). From estimate (\ref{equivH}),  this implies that $(H^{\rel}g^n_z)$ converges strongly to $\psi$ in $\mY$.  The uniqueness of the limit and the fact that the above convergence arguments hold for any subsequence ensure that the convergence holds for all the sequence.
 \end{proof}

 \subsection{Comparison of two problems revealing the presence of cracks}
 
For $z\in\Om$, we consider the two following boundary value problems
\begin{equation}\label{eq:PGamma+AB}
(\mathscr{P}_z)\ \begin{array}{|rcll}
 \multicolumn{4}{|l}{\mbox{Find }w_z\in \mH^1(\Omega)\mbox{ such that}}\\[4pt]
   \Delta w_z + k^2 w_z &= &0 & \text{in} ~  \Omega \\[2pt]
w_z &= &\Phi_z & \mbox{on } \partial \Omega  
\end{array}\qquad\quad 
(\mathscr{P}^\Gamma_z)\ \begin{array}{|rcll}
 \multicolumn{4}{|l}{\mbox{Find }w_z^\Gamma\in \mH^1(\Omega\setminus\Gammabar)\mbox{ such that }}\\[4pt]
   \Delta w_z^\Gamma + k^2 w_z^\Gamma &= &0 & \text{in} ~  \Omega\setminus\Gammabar \\[2pt]
   \partial_{\nu}^{\pm}w_z^\Gamma &=& 0& \mbox{on } \Gamma\cap\Omega  \\[2pt]
 w_z^\Gamma &= &\Phi_z & \mbox{on } \partial \Omega.  
  \end{array}
\end{equation}
The theorem \ref{th:Converge2FFSolution+AB} guarantees that we have access to $\partial_{\nu}w_z^\Gamma$ on $\partial\Om$ from the knowledge of the far field data though the location of the cracks is unknown. On the other hand, the term $\partial_{\nu}w_z$ on $\partial\Om$ can be computed numerically. The idea of the method below is to compare $\partial_{\nu}w_z^\Gamma$ and $\partial_{\nu}w_z$ to know if there are some cracks inside $\Om$. Of course if $\Gamma\cap\Omega = \emptyset$, the two problems above are the same. If $k^2$ is not an eigenvalue of the Dirichlet problem \eqref{eq:RTEPbNoCrack+AB}, then for all $z\in\Om$, the function $w_z$ is uniquely defined. And we have $\Im m\,w_z=\Im m\,\Phi_z$ in $\Omega$. Indeed, $\Phi_z$ is given by (\ref{FormulaPhi}) and due to the smoothness of $J_0$ for $d=2$ or the smoothness of $x\mapsto \sin(x)/x$ for $d=3$, we observe that we have $\Im m\,\Phi_z\in \mH^1(\Omega)$. This is enough to guarantee that $\Im m\,\Phi_z$ coincides with $\Im m\,w_z$ in $\Om$. Moreover we have the following result.

\begin{lemma}
\label{lem:DLSM+AB}
Assume that $k^2$ is not a RTE nor an eigenvalue of the Dirichlet problem \eqref{eq:RTEPbNoCrack+AB}. Let $w_z$, $w^\Gamma_z$ be the solutions of $(\mathscr{P}_z)$, $(\mathscr{P}^\Gamma_z)$ respectively. If $\Gamma\cap\Omega\neq\emptyset$, then the set 
\[\{z\in\Om\,|\,\partial_{\nu}(\Im m\,w_z^\Gamma)=\partial_{\nu}(\Im m\,\Phi_z)\mbox{ on }\partial\Om\}\]
has empty interior. 
\end{lemma}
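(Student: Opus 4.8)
The plan is to argue by contradiction and reduce the statement to a unique continuation argument followed by an analyticity argument. So suppose the set in the statement has nonempty interior, i.e. it contains a nonempty open ball $O\subset\Om$. For $z\in\Om$ I introduce the difference
\[
e_z := \Im m\,w_z^\Gamma - \Im m\,\Phi_z \qquad \text{in } \Om\setminus\Gammabar .
\]
Since $k^2$ is real, taking imaginary parts in $(\mathscr{P}^\Gamma_z)$ shows that $e_z$ solves the Helmholtz equation in $\Om\setminus\Gammabar$. Its boundary data are computed from those of $w_z^\Gamma$: using $\d_\nu^{\pm}(\Im m\,w_z^\Gamma)=0$ on $\Gamma\cap\Om$, the smoothness of $\Im m\,\Phi_z$ across $\Gamma$ (so that its two one-sided normal derivatives coincide), and $w_z^\Gamma=\Phi_z$ on $\d\Om$, I obtain
\[
\Delta e_z+k^2 e_z=0\ \text{in}\ \Om\setminus\Gammabar,\qquad \d_\nu^{\pm}e_z=-\d_\nu(\Im m\,\Phi_z)\ \text{on}\ \Gamma\cap\Om,\qquad e_z=0\ \text{on}\ \d\Om .
\]
For every $z\in O$, the very definition of the set forces $\d_\nu(\Im m\,w_z^\Gamma)=\d_\nu(\Im m\,\Phi_z)$ on $\d\Om$, that is $\d_\nu e_z=0$ on $\d\Om$. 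Hence, for each $z\in O$, the function $e_z$ has \emph{vanishing Cauchy data} on $\d\Om$.

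Next I invoke unique continuation. Because $\overline\Gamma\cap\d\Om=\emptyset$, the function $e_z$ is a genuine Helmholtz solution in a one-sided neighbourhood of $\d\Om$, so Holmgren's theorem gives $e_z\equiv0$ in such a neighbourhood. Since $\Om\setminus\Gammabar$ is connected (the crack is an open portion of a hypersurface whose closure lies strictly inside $\Om$, with free edges, and therefore does not disconnect $\Om$), the unique continuation principle propagates this to $e_z\equiv0$ throughout $\Om\setminus\Gammabar$. Approaching $\Gamma\cap\Om$ from either side, the normal derivative of the null function $e_z$ vanishes, so $\d_\nu^{\pm}e_z=0$ on $\Gamma\cap\Om$; comparing with the Neumann trace above yields
\[
\d_\nu(\Im m\,\Phi_z)=0 \quad \text{on } \Gamma\cap\Om, \qquad \text{for every } z\in O .
\]

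Finally I derive a contradiction by analyticity. Fix $x_0\in\Gamma\cap\Om$ (which exists since $\Gamma\cap\Om\neq\emptyset$). By \eqref{FormulaPhi}, $\Im m\,\Phi_z(x)$ depends only on $|x-z|$, say $\Im m\,\Phi_z(x)=\chi(k|x-z|)$ for a smooth profile $\chi$ (with $\chi=\tfrac14 J_0$ when $d=2$), whence
\[
\d_\nu(\Im m\,\Phi_z)(x_0)=k\,\chi'(k|x_0-z|)\,\frac{\nu(x_0)\cdot(x_0-z)}{|x_0-z|}.
\]
As a function of $z$ this is real-analytic on the connected set $\R^d\setminus\{x_0\}$, and it is not identically zero (along the ray $x_0-z=t\,\nu(x_0)$ the angular factor equals $1$ while $\chi'$ has only isolated zeros). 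A nonzero real-analytic function cannot vanish on the nonempty open set $O\setminus\{x_0\}$, which contradicts the conclusion of the previous step. Therefore no such open ball $O$ exists and the set has empty interior.

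The step I expect to require the most care is the unique continuation/connectedness argument: one must justify that removing $\Gamma$ does not disconnect $\Om$, so that vanishing Cauchy data on $\d\Om$ truly forces $e_z$ to be zero all the way up to $\Gamma$ from both sides; the remaining ingredients are the standard smoothness of the imaginary part of the fundamental solution and the identity theorem for real-analytic functions.
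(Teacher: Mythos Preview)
Your proof is correct, and its first half coincides with the paper's: both argue by contradiction, take a ball $O\subset\Om$ on which the Neumann traces agree, form the difference $e_z=\Im m\,w_z^\Gamma-\Im m\,\Phi_z$, note it has vanishing Cauchy data on $\partial\Om$, and use unique continuation in $\Om\setminus\Gammabar$ to conclude that $\partial_\nu(\Im m\,\Phi_z)=0$ on $\Gamma\cap\Om$ for every $z\in O$. The connectedness concern you flag is real but is equally implicit in the paper's argument.

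The divergence is in how the contradiction is extracted from $\partial_\nu(\Im m\,\Phi_z)|_{\Gamma\cap\Om}=0$ for all $z\in O$. The paper integrates this identity against an arbitrary real density $\beta$ on $\Gamma$ to form the double layer potential $f(z)=\int_\Gamma\beta\,\partial_{\nu(x)}\Phi_z\,ds$, observes that $\Im m\,f$ vanishes on $O$ and hence (by unique continuation) everywhere, and then uses that a real-valued radiating Helmholtz solution must vanish identically, contradicting the jump relations for $f$. You instead freeze a single point $x_0\in\Gamma\cap\Om$ and exploit directly that $z\mapsto\partial_\nu(\Im m\,\Phi_z)(x_0)$ is a nonzero real-analytic function of $z$, which cannot vanish on the open set $O$. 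Your route is shorter and more elementary (no layer potentials, no radiation-condition trick), relying only on the explicit smooth profile of $\Im m\,\Phi_z$; the paper's route is more structural and would transfer more easily to situations where no closed-form fundamental solution is available.
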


\begin{proof}
We prove this result by working by contradiction. Assume that there is a non empty open ball $B\subset\Om$ such that for all $z\in B$, we have $\partial_{\nu}(\Im m\,w_z^\Gamma)=\partial_{\nu}(\Im m\,\Phi_z)$ on $\partial\Om$. In this case, for all $z\in B$, $\Im m\,w_z^\Gamma$ and $\Im m\,\Phi_z$ have the same Cauchy data on $\d\Omega$. From classical results of unique continuation, this implies that we must have $\Im m\,w_z^\Gamma=\Im m\,\Phi_z$ in $\Omega\setminus\Gammabar$. Hence for all $z\in B$, we must have 
\begin{equation}\label{hyp:contradiction+AB}
\partial_{\nu}^{\pm}(\Im m\,\Phi_z) = 0\mbox{ on }\Gamma\cap\Om.
\end{equation} 
We now show that the above result leads to a contradiction. Let $\beta$ be a real valued function of $\mL^2(\Gamma)$. We define $f$ such that 
\begin{equation}\label{def:technicalFunc}
f(z) = \int_\Gamma\beta(x)\d^+_{\nu(x)}\Phi_z(x)\,dx.
\end{equation}
We observe that $f$ solves the Helmholtz equation in $\R^d\setminus\Gammabar$ and satisfies the Sommerfeld radiation condition. From (\ref{hyp:contradiction+AB}), we note also that there holds $\Im m\,f = 0$ in $B$. Then the unique continuation principle implies that $\Im m\,f$ vanishes in $\R^d$. But a real valued solution of the Helmholtz equation which satisfies the Sommerfeld radiation condition is necessarily zero. Hence we have $f=0$ in $\R^d$. This contradicts the classical jump property of the double layer potential which states that $[\d_\nu f] = \beta$ on $\Gamma$. This ends the proof.   
\end{proof}

 \begin{theorem}
 \label{th:DLSM+AB}
Assume that $k^2$ is not a RTE nor an eigenvalue of the Dirichlet problem \eqref{eq:RTEPbNoCrack+AB}.
For $z\in\Omega$, let $(g_z^n)_{n\in\N}$ be a sequence defined via \eqref{def:GLSMconvergenceCondition+AB}. Then we have
\[
\begin{array}{lcl}
\Gamma\cap\Omega = \emptyset\qquad&\Leftrightarrow &\qquad\dsp\lim_{n\to+\infty}\|H_{\d\Omega}g_z^n-\partial^+_{\nu}(w_z - \Phi_z)\|_{\mH^{-1/2}(\partial\Om)} = 0 \mbox{ for  a.e $z$ in $\Omega$}\\[10pt]
\qquad&\Leftrightarrow &\qquad\dsp\lim_{n\to+\infty}\|\Im m\,(H_{\d\Omega}g_z^n)\|_{\mH^{-1/2}(\partial\Om)}=0\mbox{ for a.e $z$ in $\Omega$.}
\end{array}
\]
where $w_z$ is the solution of $(\mathscr{P}_z)$ (see \eqref{eq:PGamma+AB}).
 \end{theorem}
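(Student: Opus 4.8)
The plan is to read off the two limits from the strong convergence established in Theorem~\ref{th:Converge2FFSolution+AB}, thereby turning the statement into two pointwise equalities on $\d\Om$, and then to prove the cyclic chain of implications
\[
\Gamma\cap\Om=\emptyset \ \Longrightarrow\ (\text{first limit}=0\text{ a.e.})\ \Longrightarrow\ (\text{second limit}=0\text{ a.e.})\ \Longrightarrow\ \Gamma\cap\Om=\emptyset,
\]
the last implication being the one that carries all the content.

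First I would identify the limits. By Theorem~\ref{th:Converge2FFSolution+AB}, for every $z\in\Om$ the sequence $(H_{\d\Om}g_z^n)$ converges strongly in $\mH^{-1/2}(\d\Om)$ to the first component $\psi_1=\d_\nu(w_z^\Gamma-\Phi_z)|_{\d\Om}$ of the unique $\psi$ solving $G^\rel\psi=\Phi_z^\infty$ (see \eqref{DefPsi}), where $w_z^\Gamma$ solves $(\mathscr{P}_z^\Gamma)$. Since the norm is continuous under strong convergence, the first quantity in the statement then tends, for every $z$, to $\|\d_\nu(w_z^\Gamma-w_z)\|_{\mH^{-1/2}(\d\Om)}$ (the $\Phi_z$ contributions and the $\d\Om$-side traces match and cancel), while the second tends to $\|\d_\nu(\Im m\,w_z^\Gamma)-\d_\nu(\Im m\,\Phi_z)\|_{\mH^{-1/2}(\d\Om)}$, using that $\Im m$ commutes with $\d_\nu$. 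Invoking the identity $\Im m\,w_z=\Im m\,\Phi_z$ in $\Om$ recalled just before Lemma~\ref{lem:DLSM+AB}, the two limit conditions become, respectively, the pointwise equalities $\d_\nu w_z^\Gamma=\d_\nu w_z$ and $\d_\nu(\Im m\,w_z^\Gamma)=\d_\nu(\Im m\,\Phi_z)$ on $\d\Om$, required to hold for almost every $z\in\Om$.

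The first two implications are then short. If $\Gamma\cap\Om=\emptyset$, then $(\mathscr{P}_z)$ and $(\mathscr{P}_z^\Gamma)$ coincide, so $w_z^\Gamma=w_z$ and the first equality holds for every $z$. Moreover, taking the imaginary part of the first equality and using $\Im m\,w_z=\Im m\,\Phi_z$ shows that, pointwise in $z$, the first equality implies the second; hence the first limit condition implies the second.

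The heart of the proof is the implication ``second limit $=0$ a.e.\ $\Rightarrow\Gamma\cap\Om=\emptyset$'', which I would argue by contraposition. Suppose $\Gamma\cap\Om\neq\emptyset$. Lemma~\ref{lem:DLSM+AB} tells us that the set $S:=\{z\in\Om:\d_\nu(\Im m\,w_z^\Gamma)=\d_\nu(\Im m\,\Phi_z)\text{ on }\d\Om\}$ has empty interior. The delicate point, and the main obstacle, is that a set may have full measure and yet empty interior, so ``the second equality holds for a.e.\ $z$'' does not by itself contradict Lemma~\ref{lem:DLSM+AB}. To bridge the measure-theoretic statement and the topological conclusion, I would upgrade ``a.e.'' to ``everywhere'' by exploiting continuity of the $z$-dependence: the map $z\mapsto \d_\nu(\Im m\,w_z^\Gamma)-\d_\nu(\Im m\,\Phi_z)$ is continuous from $\Om$ into $\mH^{-1/2}(\d\Om)$, because $z\mapsto\Phi_z|_{\d\Om}$ is continuous (for $z\in\Om$ the singularity stays at positive distance from $\d\Om$) and the solution operator of $(\mathscr{P}_z^\Gamma)$ is bounded. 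Hence $S$ is closed in $\Om$; if the second equality held for a.e.\ $z$, then $S$ would be of full measure, thus dense, and being closed it would equal $\Om$, contradicting that $S$ has empty interior. This forces $\Gamma\cap\Om=\emptyset$ and closes the cycle.
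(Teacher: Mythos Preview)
Your argument follows the same architecture as the paper's proof: invoke Theorem~\ref{th:Converge2FFSolution+AB} to identify the limits as $\|\partial_\nu(w_z^\Gamma-w_z)\|_{\mH^{-1/2}(\partial\Om)}$ and $\|\partial_\nu(\Im m\,w_z^\Gamma)-\partial_\nu(\Im m\,\Phi_z)\|_{\mH^{-1/2}(\partial\Om)}$, then appeal to Lemma~\ref{lem:DLSM+AB} and the trivial observation that $w_z^\Gamma=w_z$ when $\Gamma\cap\Om=\emptyset$. The paper's proof is essentially a two-sentence pointer to these two ingredients.

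Where you go further than the paper is in the last implication. You correctly notice that Lemma~\ref{lem:DLSM+AB} only yields that the set $S=\{z\in\Om:\partial_\nu(\Im m\,w_z^\Gamma)=\partial_\nu(\Im m\,\Phi_z)\text{ on }\partial\Om\}$ has empty interior, whereas the theorem is phrased in terms of ``a.e.\ $z$'', and these two notions are not interchangeable in general. The paper's proof glosses over this point; your continuity argument (that $z\mapsto \partial_\nu(\Im m\,w_z^\Gamma)-\partial_\nu(\Im m\,\Phi_z)$ is continuous into $\mH^{-1/2}(\partial\Om)$, hence $S$ is closed, hence full measure forces $S=\Om$) is a legitimate and clean way to close this gap. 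The continuity claim is justified: $z\mapsto\Phi_z|_{\partial\Om}$ and $z\mapsto\partial_\nu\Phi_z|_{\partial\Om}$ are smooth in $z$ as long as $z$ stays away from $\partial\Om$, and the solution map for $(\mathscr{P}_z^\Gamma)$ is bounded since $k^2$ is not a RTE. So your proof is correct and, on this point, more rigorous than the paper's.
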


\begin{proof}
Let $w_z^\Gamma$ (resp. $w_z$) be the solution of $(\mathscr{P}^\Gamma_z)$ (resp. ($\mathscr{P}_z$)). Note that $\Gamma\cap\Omega$ can be empty or not. Theorem \ref{th:Converge2FFSolution+AB} implies that $(H_{\d\Omega} g_z^n)$ converges to $\partial^+_{\nu}(w_z^\Gamma - \Phi_z)$ in $\mH^{-1/2}(\partial\Om)$. The result is then a direct consequence of Lemma \ref{lem:DLSM+AB} and of the fact that $w_z^\Gamma=w_z$ when $\Gamma\cap\Om=\emptyset$.
\end{proof}
\noindent As in the previous section, we propose to detect the position of the cracks by sweeping the probed region with a collection of artificial obstacles $\Omega_t = B(t,\rho)$ (the ball of radius $\rho$ centered at $t$). In light of Theorem \ref{th:DLSM+AB}, we define the indicator function 
\begin{equation}\label{eq:CrackIndicator}
\mathcal{J}^n(t) := \int_{\Omega_t}\|H_{\d\Omega_t}g_z^n-\partial^+_{\nu}(w_z - \Phi_z)\|_{\mH^{-1/2}(\partial\Om_t)}\,dz.
\end{equation}
Then for all $t\in\R^d$, we have 
\[
\Gamma\cap\Omega_t = \emptyset \qquad\Leftrightarrow\qquad\lim_{n\to+\infty} \mathcal{J}^n(t) = 0.
\]

\begin{remark}
\label{Rem:contrast}
According to Theorem  \ref{th:DLSM+AB}, we can also define the following indicator function which does not require to compute $w_z$:
\begin{equation}
\label{def:indicJ2}
\tilde{\mathcal{J}}^n(t):= \int_{\Omega_t}\|\Im m\,(H_{\d\Omega_t}g_z^n)\|_{\mH^{-1/2}(\partial\Om)}\,dz.
\end{equation}
Since for all $z\in \Omega$, there holds $\Im m (H_{\d\Omega}g_z^n-\partial^+_{\nu}(w_z - \Phi_z)) = 
\Im m (H_{\d\Omega}g_z^n) \text{ on } \d\Omega$, from the definitions of $\mathcal{J}^n$, $\tilde{\mathcal{J}}^n$, we observe that we always have $\tilde{\mathcal{J}}^n(t)\leq \mathcal{J}^n(t)$ for $t\in \R^d$. Therefore  \textit{a priori} $\tilde{\mathcal{J}}^n$ may reveal less clearly the presence of the crack than $\mathcal{J}^n$.
\end{remark}

\subsection{Numerical results and comparison with the multiple frequencies approach}\label{ParagNum}

Similarly to what has been done in the previous section, we generate far field data from a damaged material (in other words, we work with synthetic data). But this time, we emphasize that we need data at only one single wavenumber $k$. The matrices $F^\delta$ and $F_t^\rel$ are then defined as in \S\ref{ParagrapheNum}. For a given $t\in\R^d$, we compute $g_z^n$ by solving the regularized version of the far field equation \eqref{def:RegCostFunc+AB}. Once again, the parameter $n$ is fit to the noise level according to the Morozov discrepancy principle. Then we compute $H_{\d\Omega_t}g_z^n$ solving the problem \eqref{PbChampTotalFreeSpaceComp} using analytical formulas as in (\ref{DefScaAnalytic}).  Similarly to what has been done in the previous section, we begin with the neat example of the single crack.  In Figure \ref{Fig:DemoFixed}, the crack is recovered with different resolutions using the indicator $\mathcal{J}^n(t)$. Then we take advantage of this simple example to highlight the behavior of the indicator  $\mathcal{J}^n(t)$ in Figures \ref{Fig:FixedSweepNoCrack}-\ref{Fig:FixedSweepCrack}.

\begin{figure}[H]
\centering
\includegraphics[height=4cm]{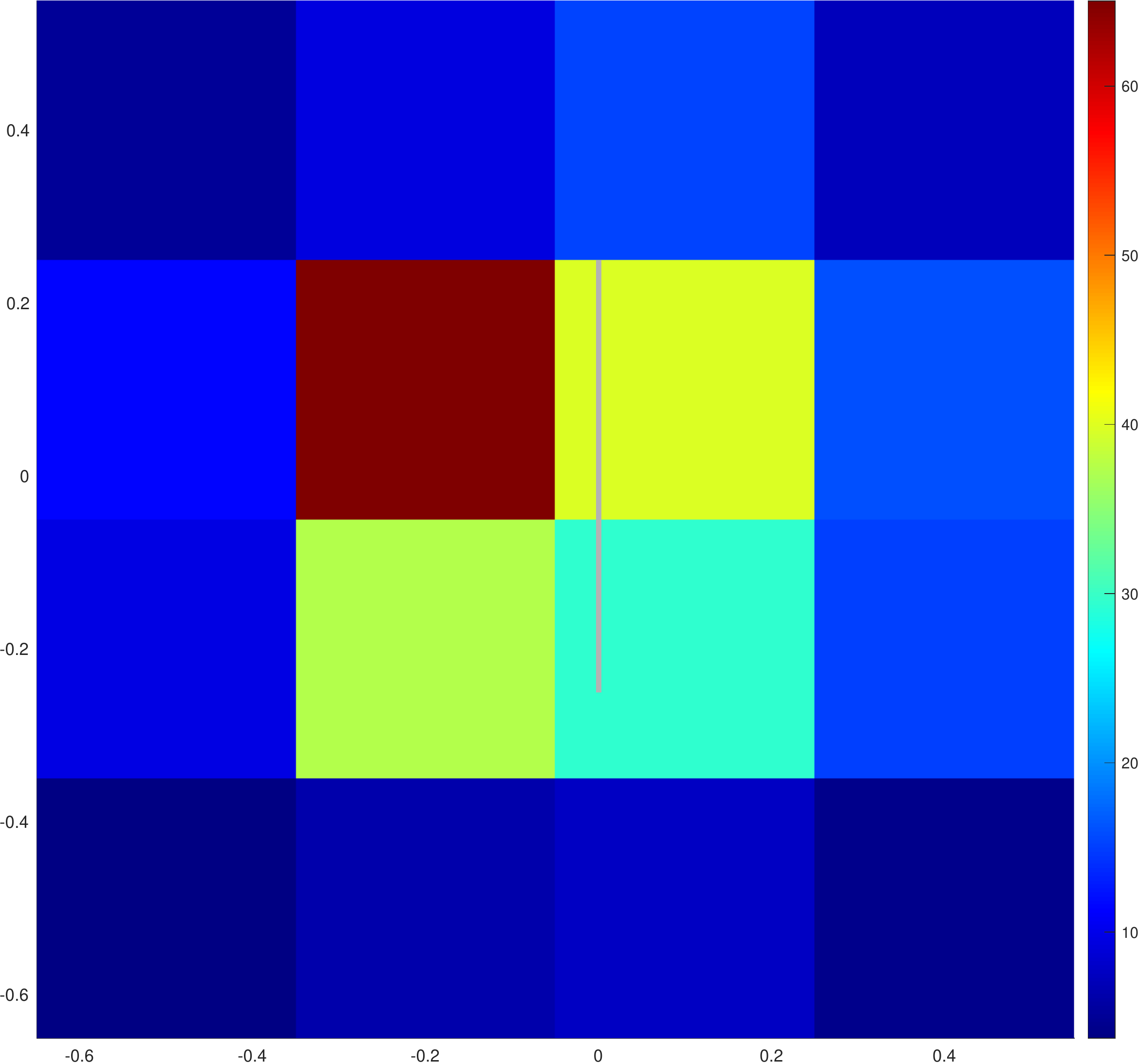}\qquad
\includegraphics[height=4cm]{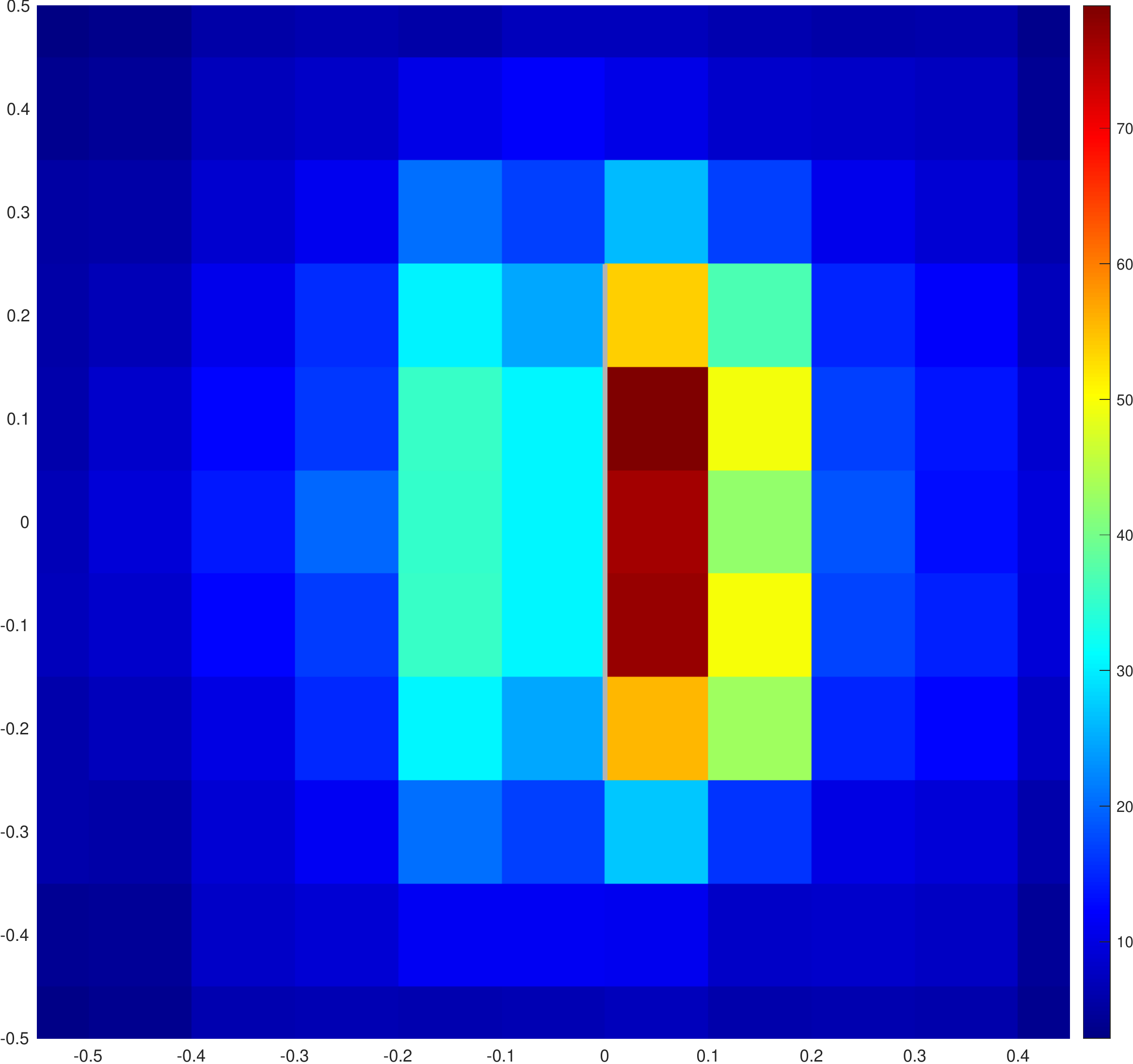}\qquad
\includegraphics[height=4cm]{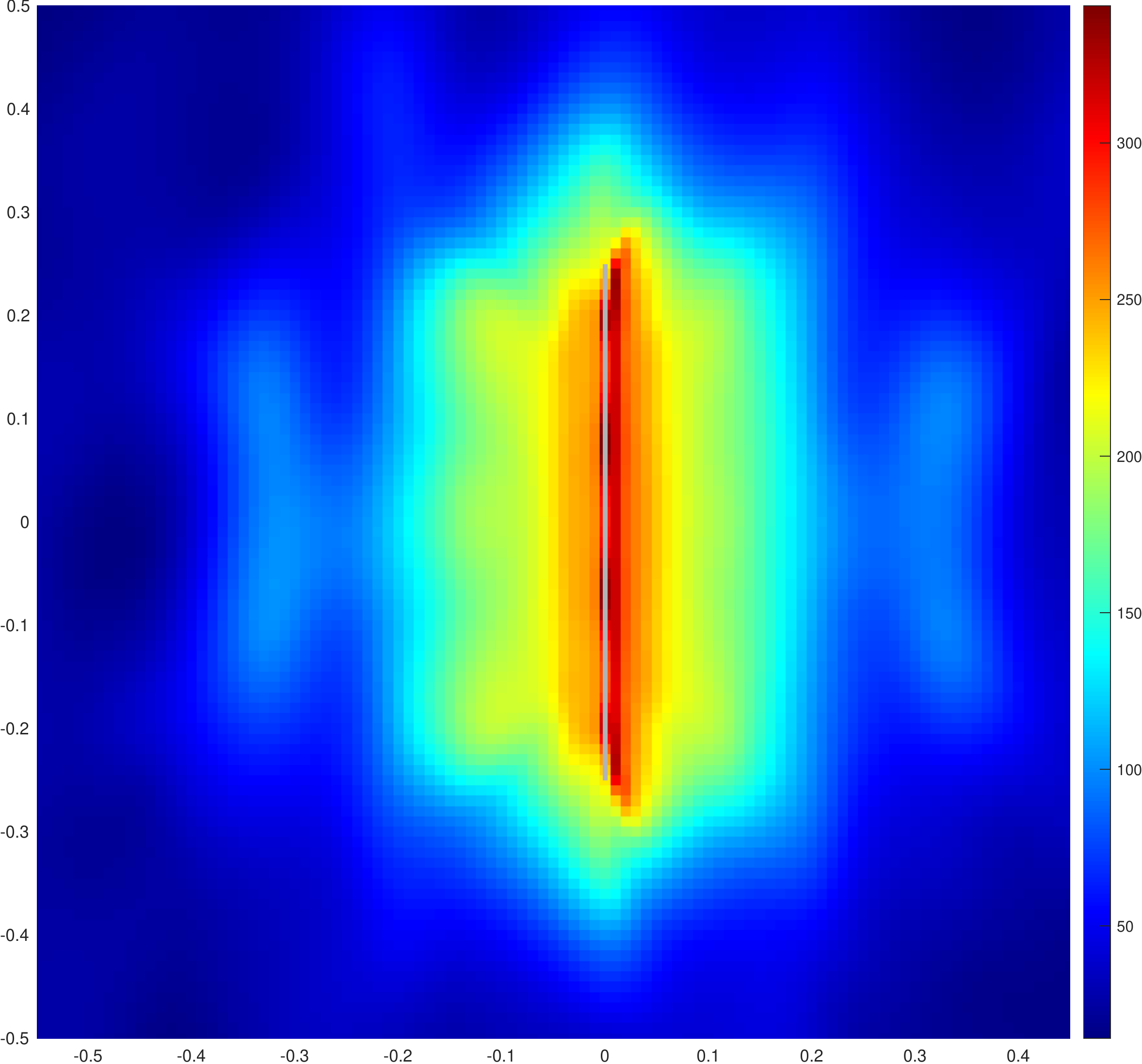}
\caption{Reconstruction of a single crack with the indicator $\mathcal{J}^n$ from given far field data generated at wavenumber $k=15$. The resolution of the image is set by the radius of the used artificial disks: $\rho=0.3$ (left), $\rho=0.1$ (middle), $\rho=0.01$ (right). The data is corrupted with $1\%$ of noise.}
\label{Fig:DemoFixed}                                                                            
\end{figure}

\noindent In  Figure \ref{Fig:FixedSweepNoCrack}, we display the plots of the quantities $H_{\d\Omega_t}g_z^n$ and $\d_\nu(w_z-\Phi_z)$ for a particular $z\in\Omega_t$ where $\Omega_t$ does not intersect the crack. We present the plots of the same quantities when $\Omega_t$ intersects the crack in Figure \ref{Fig:FixedSweepCrack}.

\begin{figure}[H]
\centering
\includegraphics[height=4cm,trim = 14cm 2.5cm 13cm 2cm, clip]{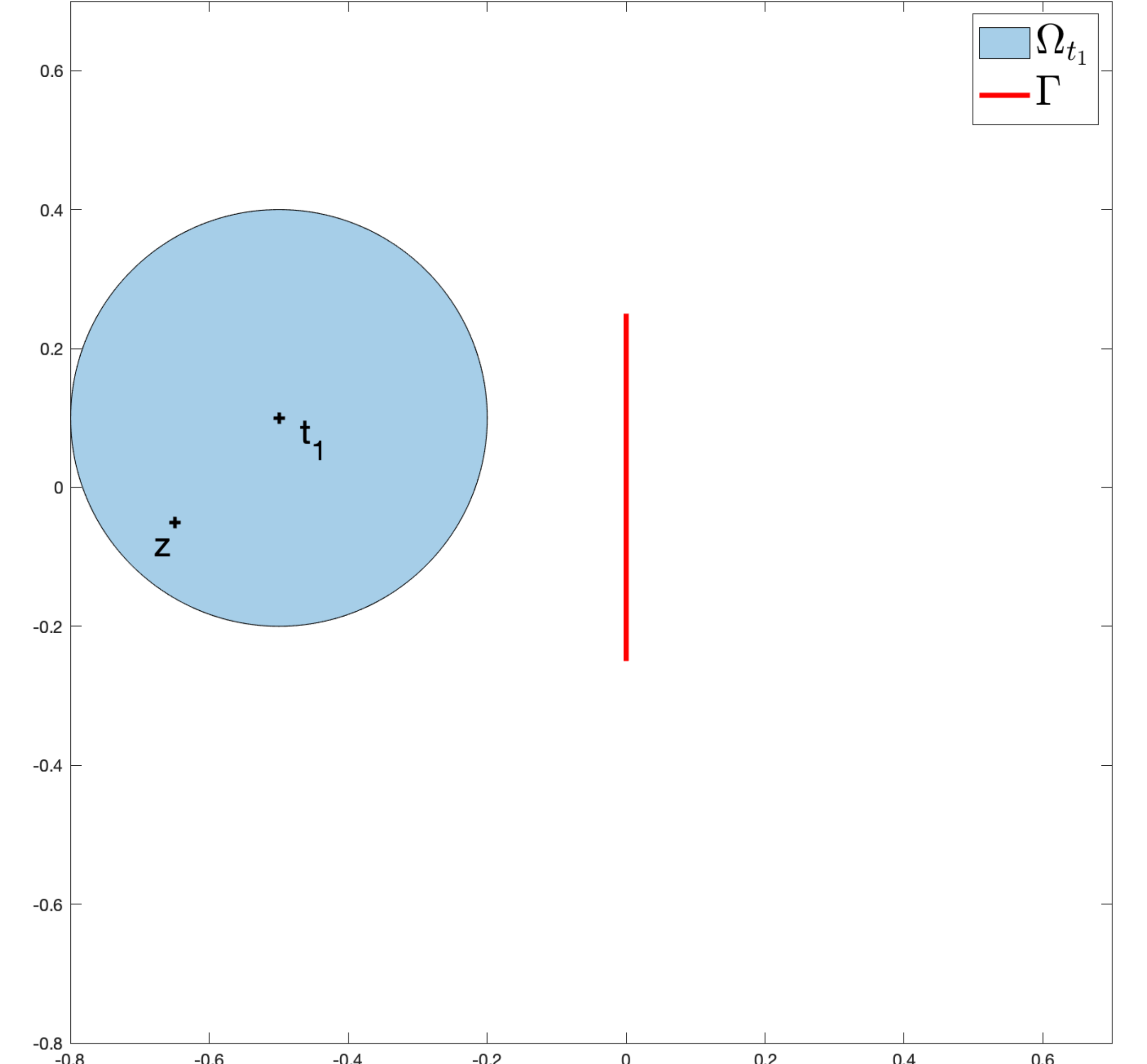}\qquad\qquad
\includegraphics[height=4cm]{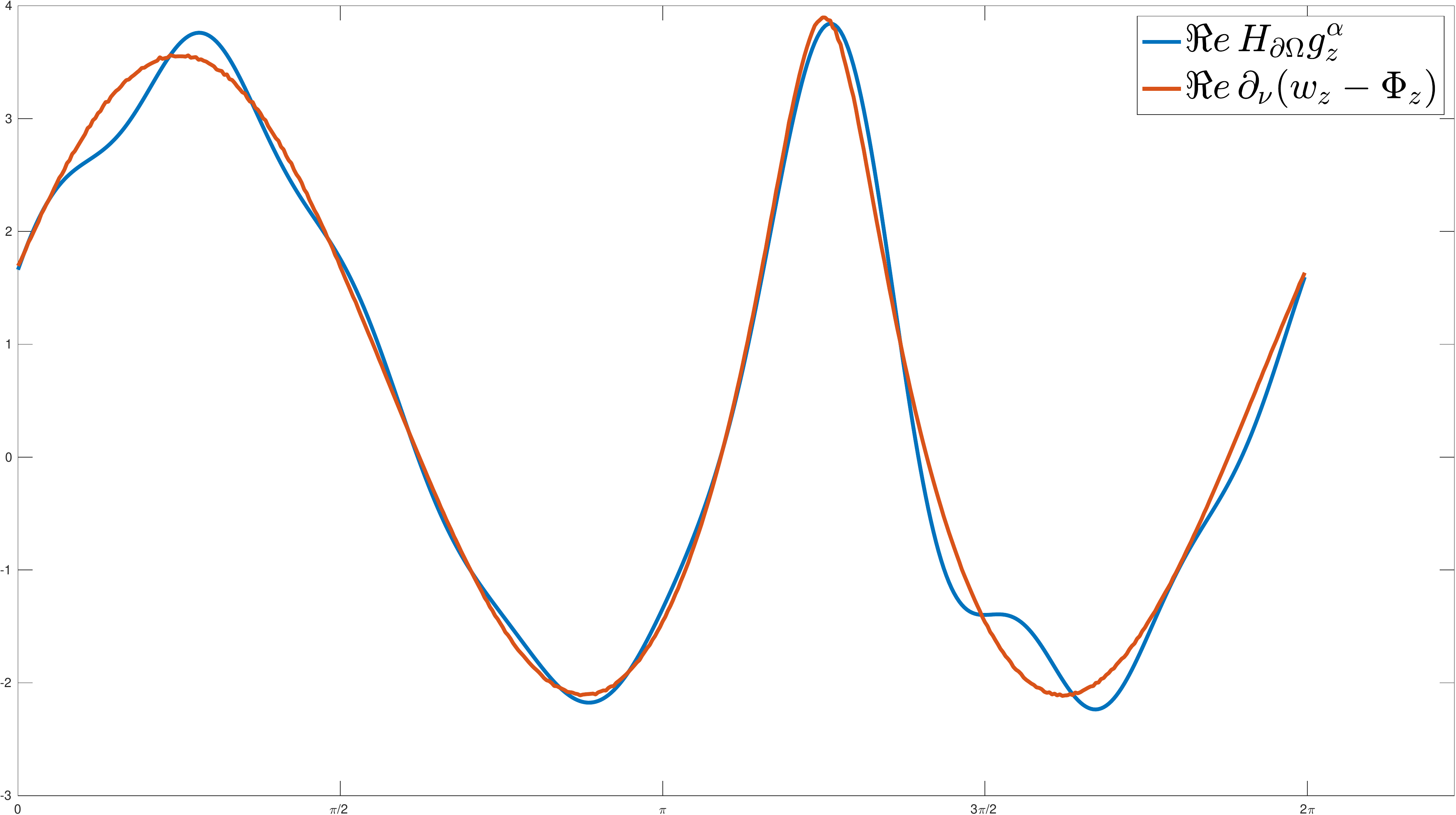}\\[10pt]
\includegraphics[height=4cm]{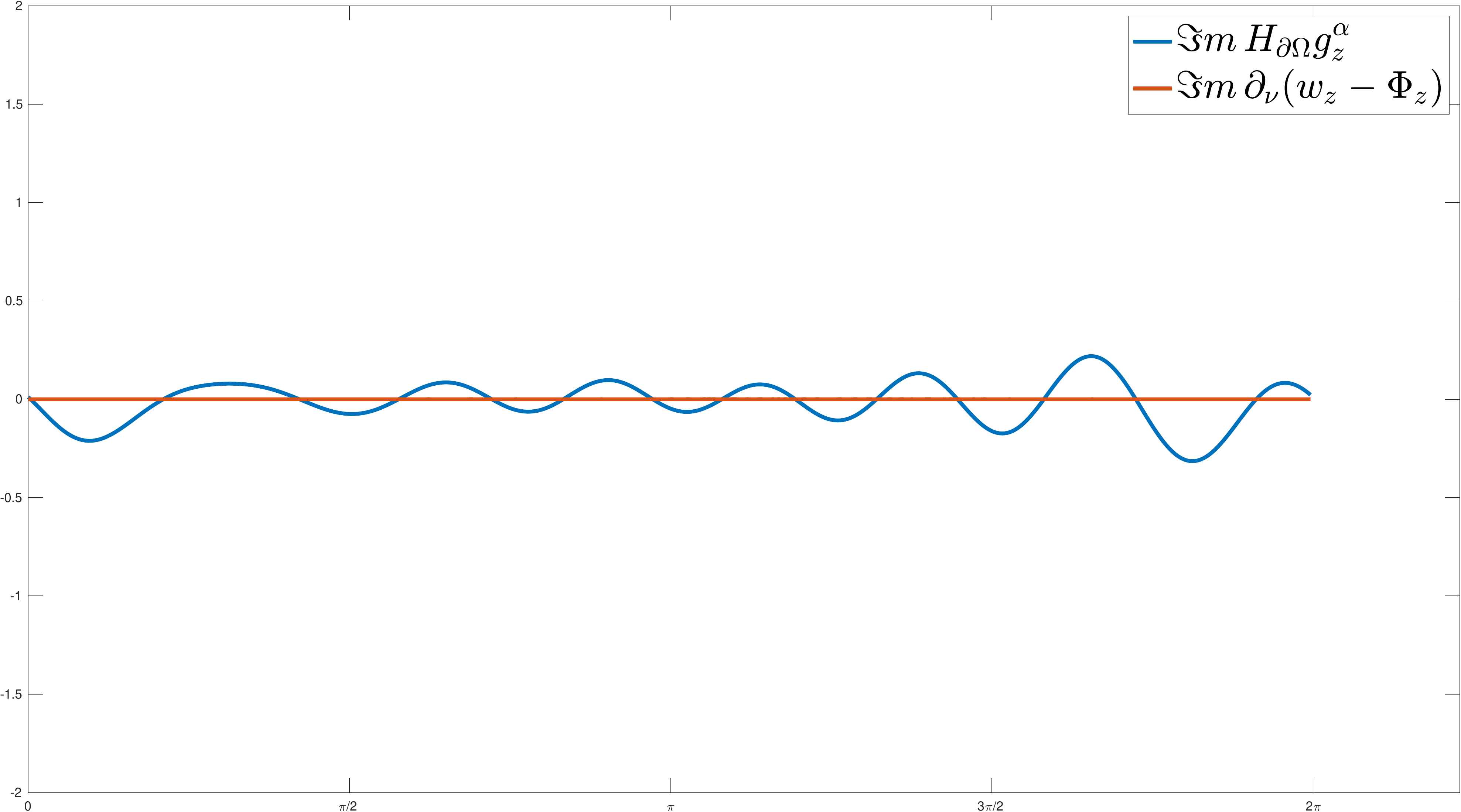}
\caption{Artificial background $\Omega_{t_1}$ and crack $\Gamma$ (top left). Since $\Omega_{t_1}$ does not intersect $\Gamma$, for any $z\in\Omega_{t_1}$, the quantities $H_{\d{\Omega_{t_1}}}g_z^n$ and $\d_\nu(w_z-\Phi_z)|_{\partial\Om_{t_1}}$ are expected to be close in $H^{-1/2}(\d\Omega_{t_1})$. The real and imaginary parts of the two latter quantities for a particular $z\in\Omega_{t_1}$ are displayed respectively on the top right graph and the bottom graph.}  
\label{Fig:FixedSweepNoCrack}  
\end{figure}

\begin{figure}[H]
\centering
\includegraphics[height=4cm,trim = 14cm 2.5cm 13cm 2cm, clip]{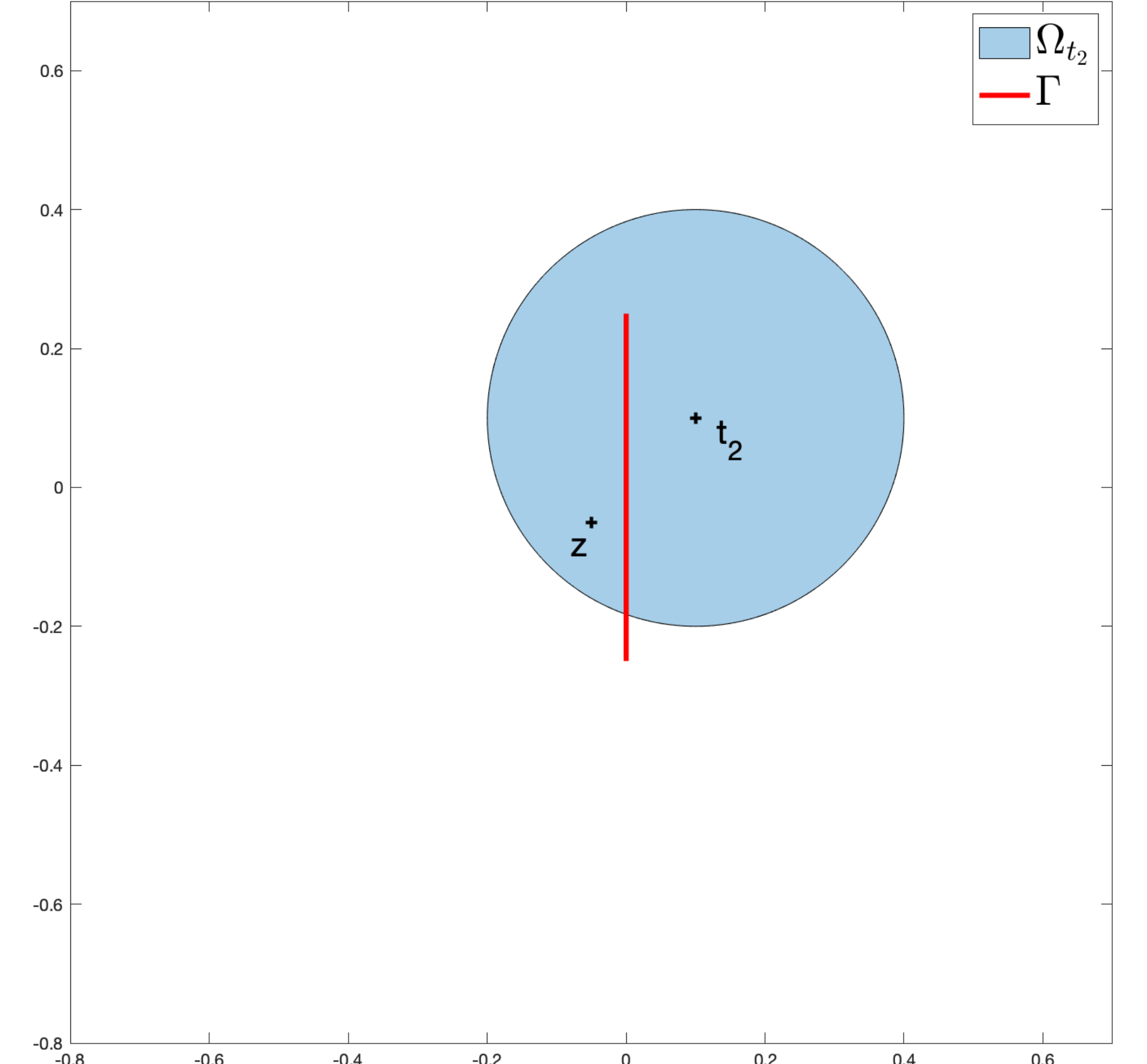}\qquad\qquad
\includegraphics[height=4cm]{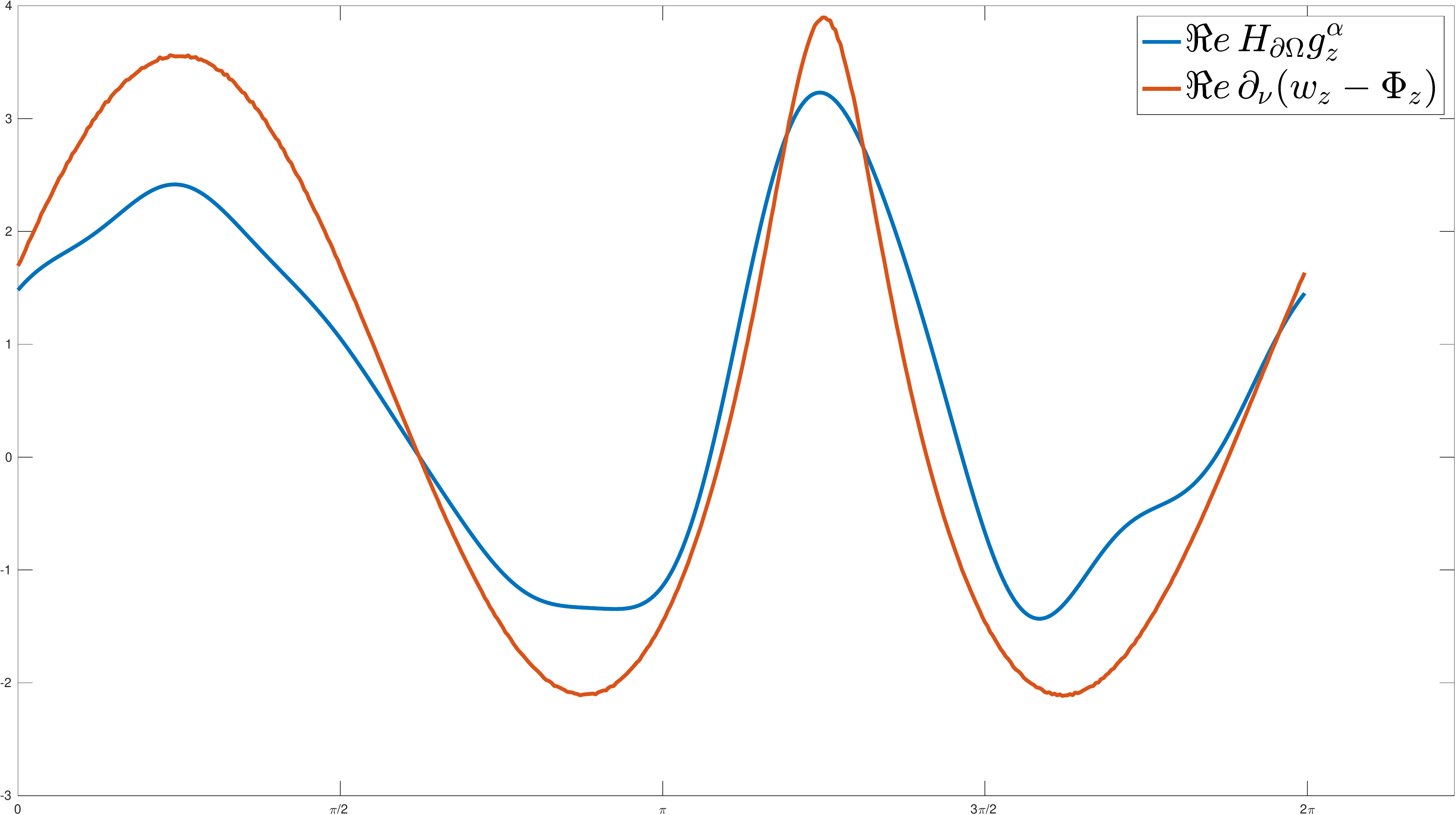}\\[10pt]
\includegraphics[height=4cm]{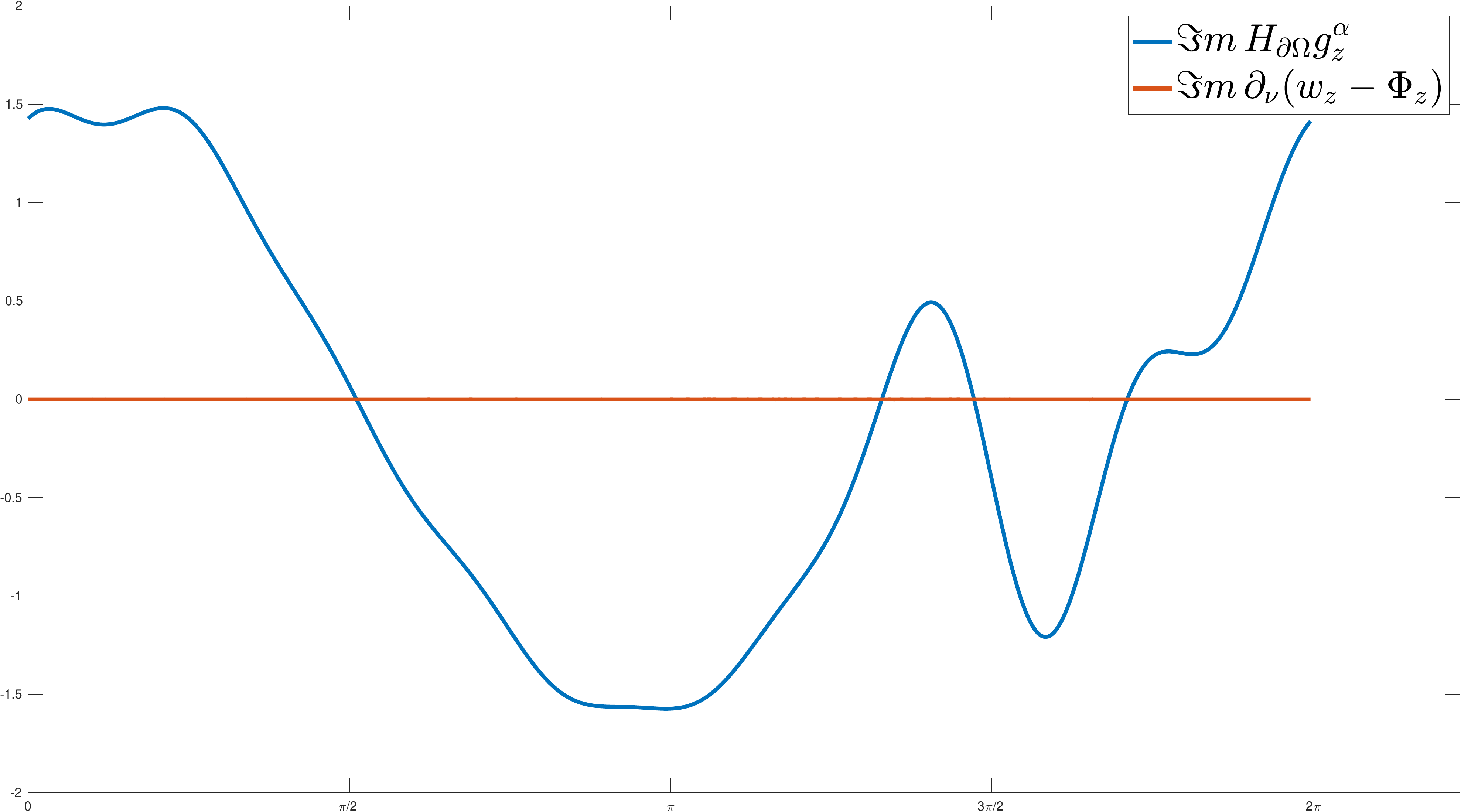}
\caption{Artificial background $\Omega_{t_2}$ and the crack $\Gamma$ (top left). Since $\Omega_{t_2}$ intersects $\Gamma$, in general for $z\in\Omega_{t_2}$, the quantities $H_{\d{\Omega_{t_2}}}g_z^n$ and $\d_\nu(w_z-\Phi_z)|_{\partial\Om_{t_2}}$ are expected to be different in $H^{-1/2}(\d\Omega_{t_2})$. The real and imaginary parts of the two latter quantities for a particular $z\in\Omega_{t_2}$ are displayed respectively on the top right graph and the bottom graph.}  
\label{Fig:FixedSweepCrack}      
\end{figure}

\noindent Since the implementation of the method is quite fast, we have been able to image the damaged materials considered in Figure \ref{fig:MultiFrq} with a higher resolution. The results are presented in Figures \ref{Fig:FixedStraightHR}-\ref{Fig:FixedRandomHR} where we also provide the images obtained with the indicator $\tilde{\mathcal{J}}^n$ defined in \eqref{def:indicJ2}. In Figure \ref{Fig:FixedRandomHR}, we observe that the indicator $\mathcal{J}^n$ offers a better contrast than $\tilde{\mathcal{J}}^n$. This is in agreement with Remark \ref{Rem:contrast}.
 
\begin{figure}[H]
\centering
\includegraphics[height=5cm]{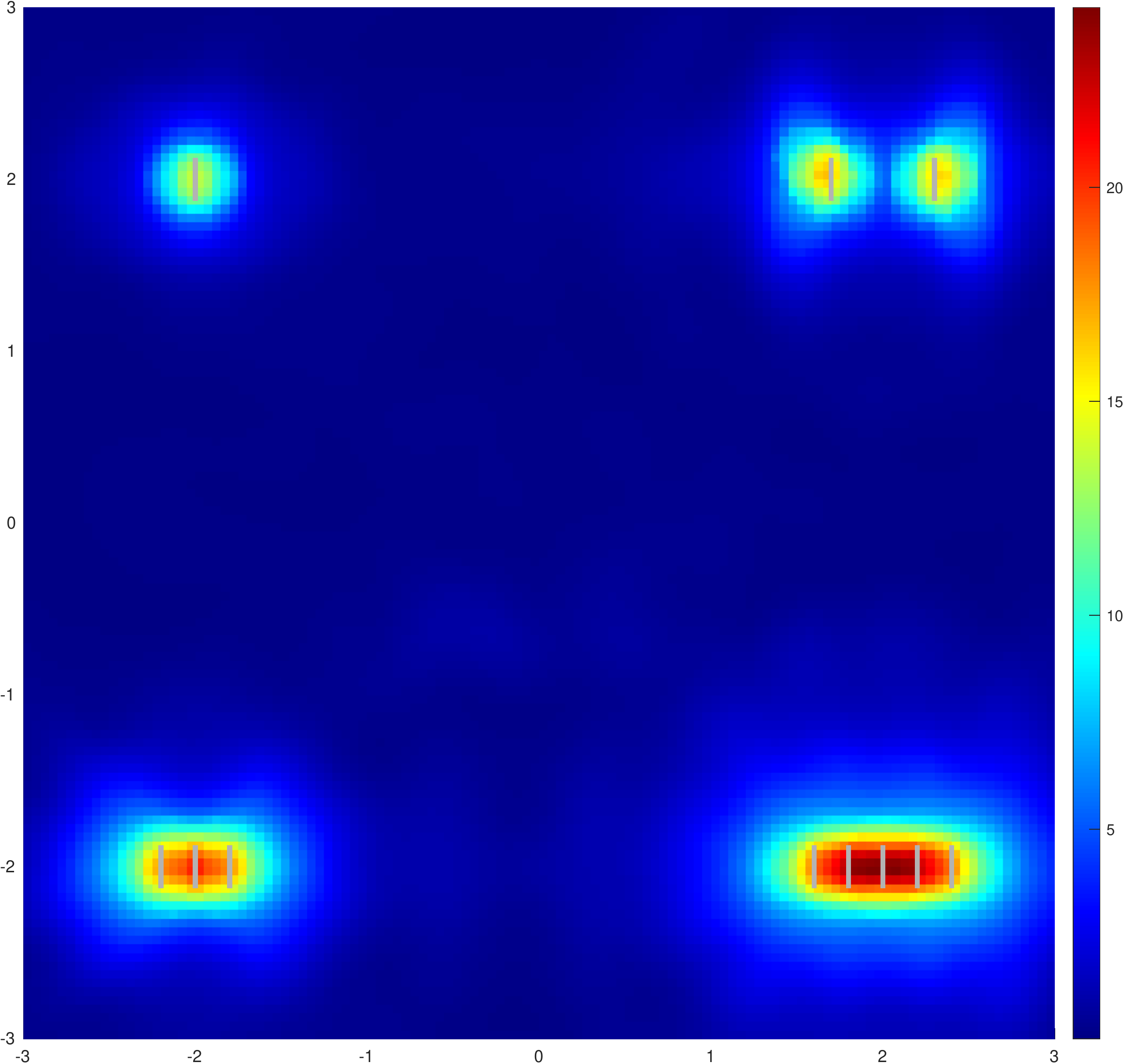}\qquad
\includegraphics[height=5cm]{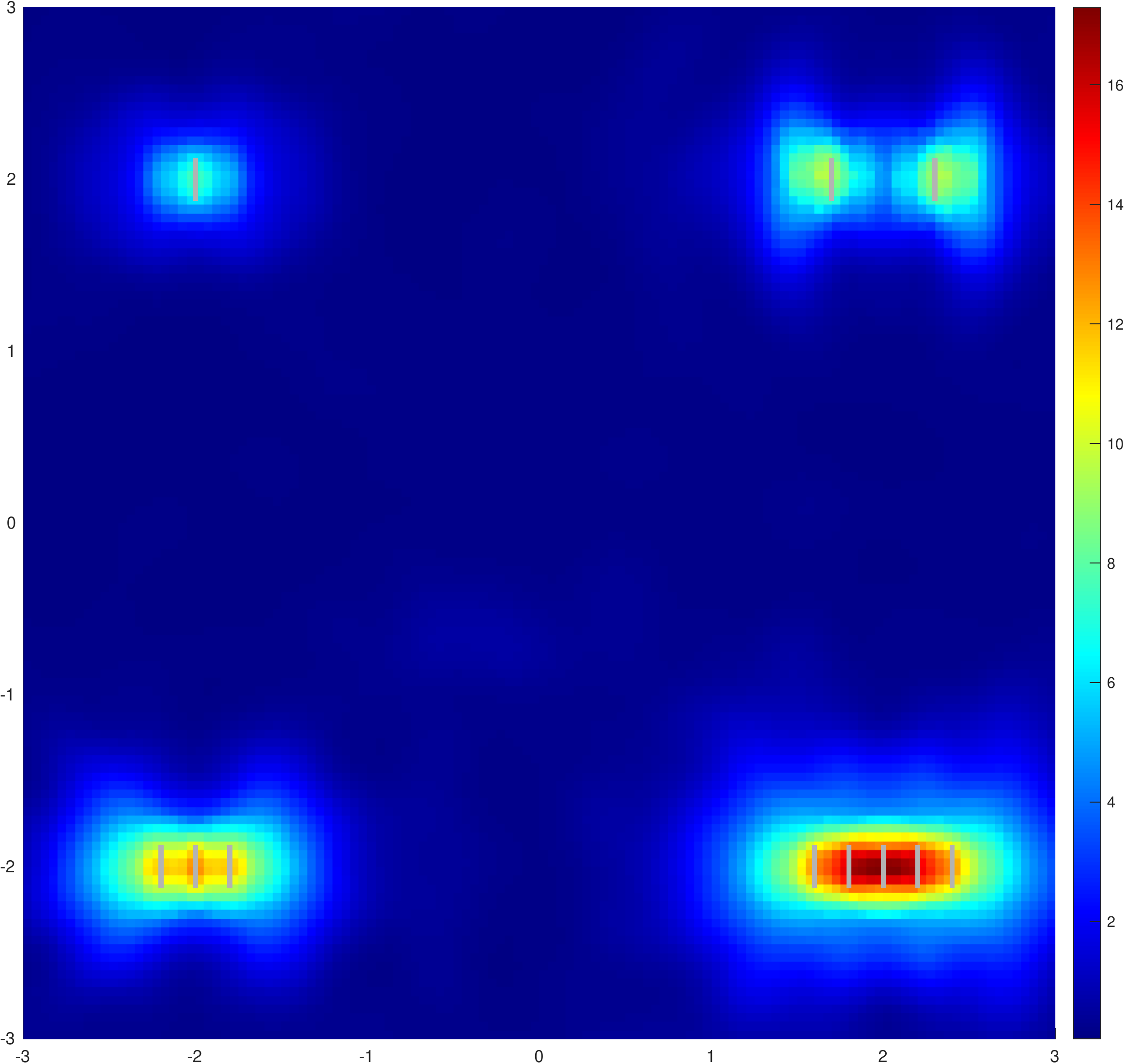}
\caption{Indicators $\mathcal{J}^n$ (left) and  $\tilde{\mathcal{J}}^n$ (right) to image a simulated damaged background with 11 vertical cracks of length 0.25 arranged in 4 areas with different damage levels. The radius of the artificial obstacles is  $\rho=0.1$. The data is corrupted with $1\%$ of noise. } 
\label{Fig:FixedStraightHR}     
\end{figure}

\begin{figure}[htpb]
\centering
\includegraphics[height=5cm]{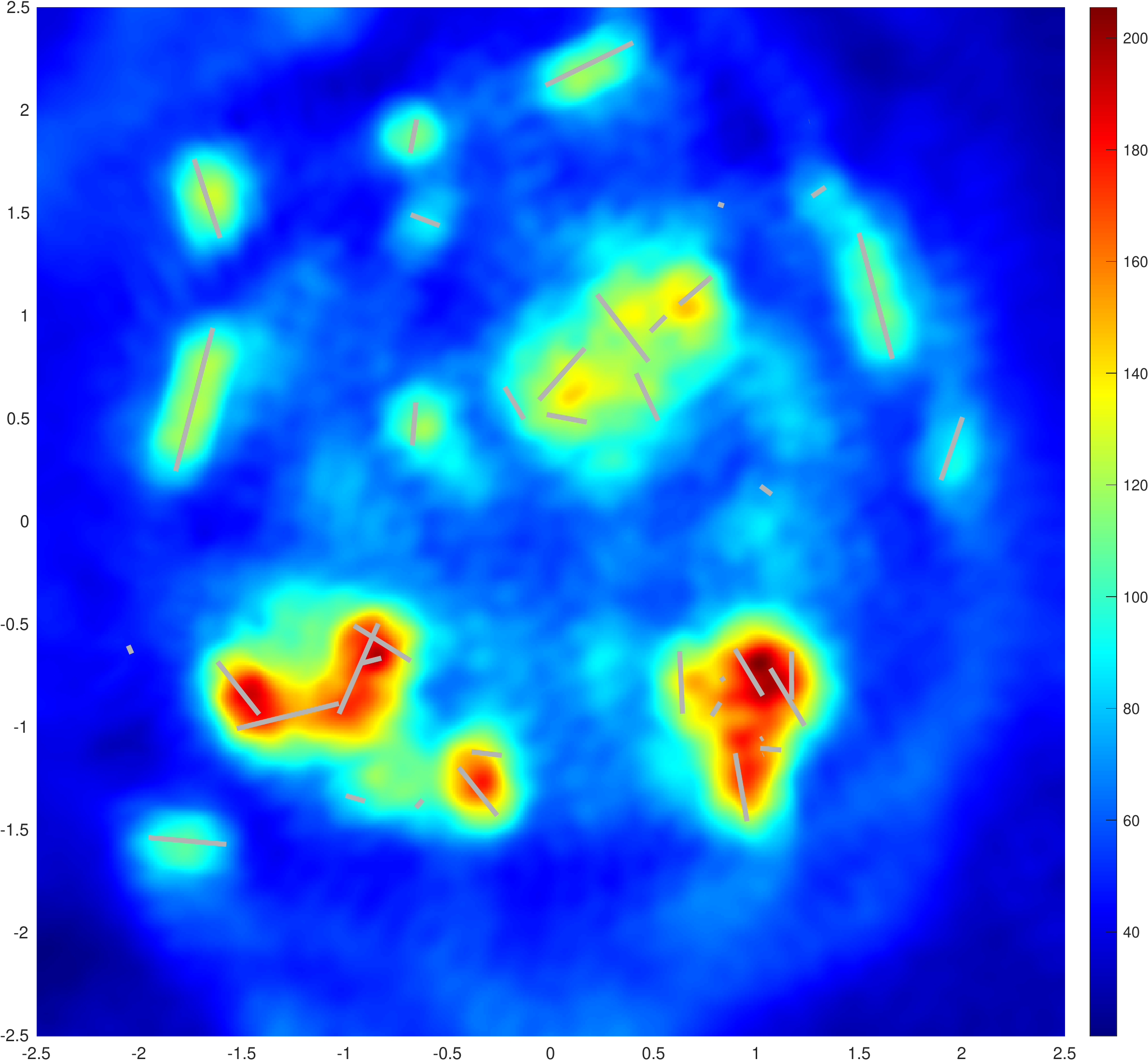}\qquad
\includegraphics[height=5cm]{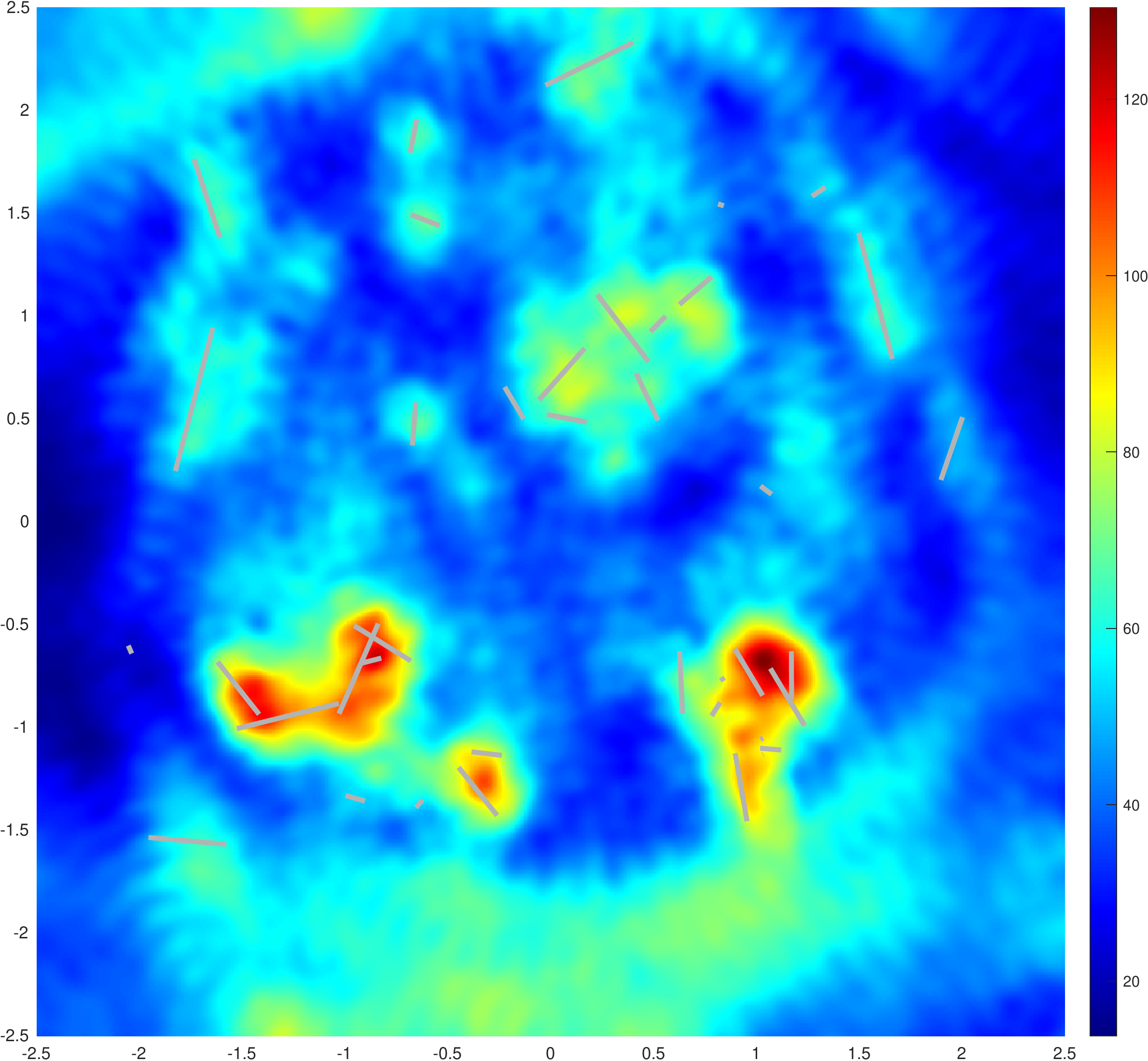}
\caption{Indicators $\mathcal{J}^n$ (left) and  $\tilde{\mathcal{J}}^n$ (right) to image a simulated damaged background with 40 cracks of different lengths arranged randomly.  The radius of the artificial obstacles is  $\rho=0.1$. The data is corrupted with $1\%$ of noise.} 
\label{Fig:FixedRandomHR}         
\end{figure}

\noindent Finally in Figure \ref{Fig:MonoDistrib7}, we compare three different indicators, namely $\mathcal{I}^n$, $\mathcal{J}^n$ and the one of the classical FM on a series of increasingly damaged materials (new born cracks are progressively added). For these examples, the data used to compute $\mathcal{J}^n$ and the FM indicator were generated at the same wavelength $\lambda = 0.15$. The distance $dz$ between sampling points when computing the FM indicator is equal to the radius of the artificial obstacles used to construct $\mathcal{J}^n$: $\rho=dz=0.01$. For $\mathcal{I}^n$, we used data generated for a sample of wavelengths between $\lambda_{min} = 0.15$ and $\lambda_{max} = 0.42$. For this indicator, the radius of the artificial obstacles is set to $\rho'=0.1$. With this setting, we have two eigenvalues in $\sigma_{\emptyset}(\Om_t)$ (see after (\ref{EigCurve})). Note that in the representation of $\mathcal{I}^n$, the colormap changes from one line to another. Figure \ref{Fig:MonoDistrib7} shows the superiority of the indicator function $\mathcal{J}^n$  when the  network is relatively sparse. When the crack network becomes  dense, only  $\mathcal{I}^n$ provides an indicator function that shows variations with respect to local densities of cracks.

\begin{figure}[htpb]
\centering
\includegraphics[height=3.8cm]{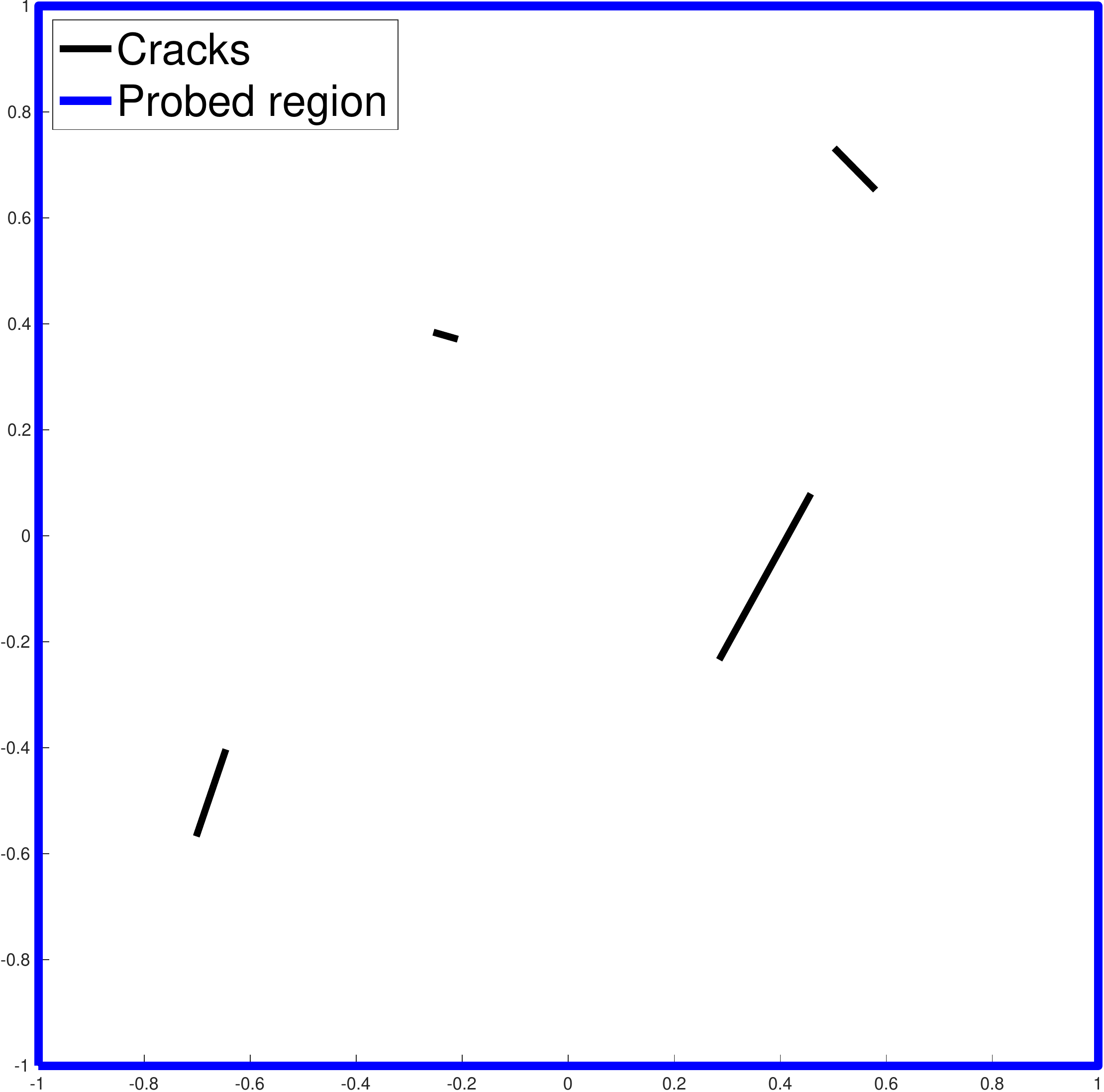}\ 
\includegraphics[height=3.8cm]{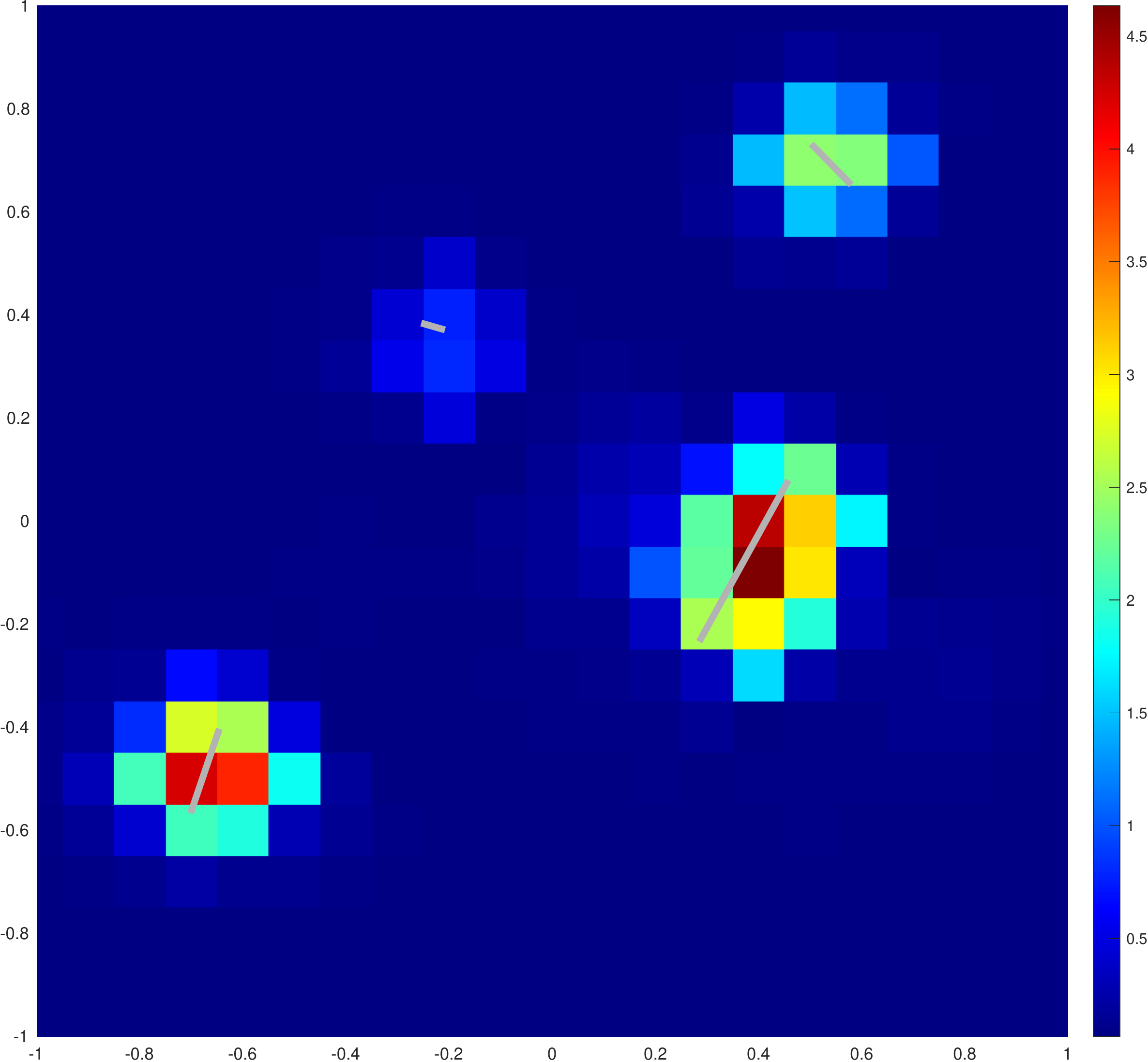}
\includegraphics[height=3.8cm]{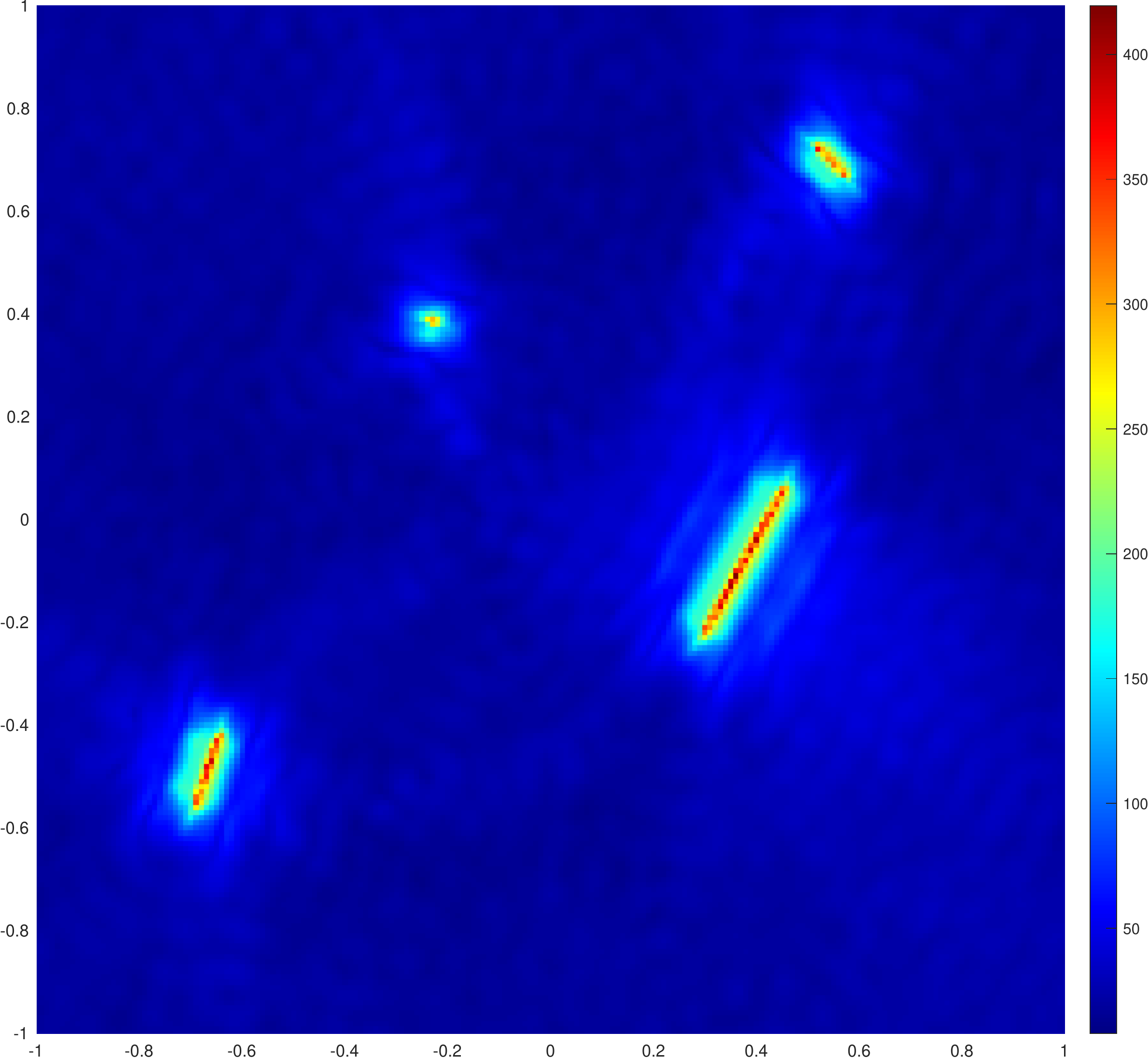}
\includegraphics[height=3.8cm]{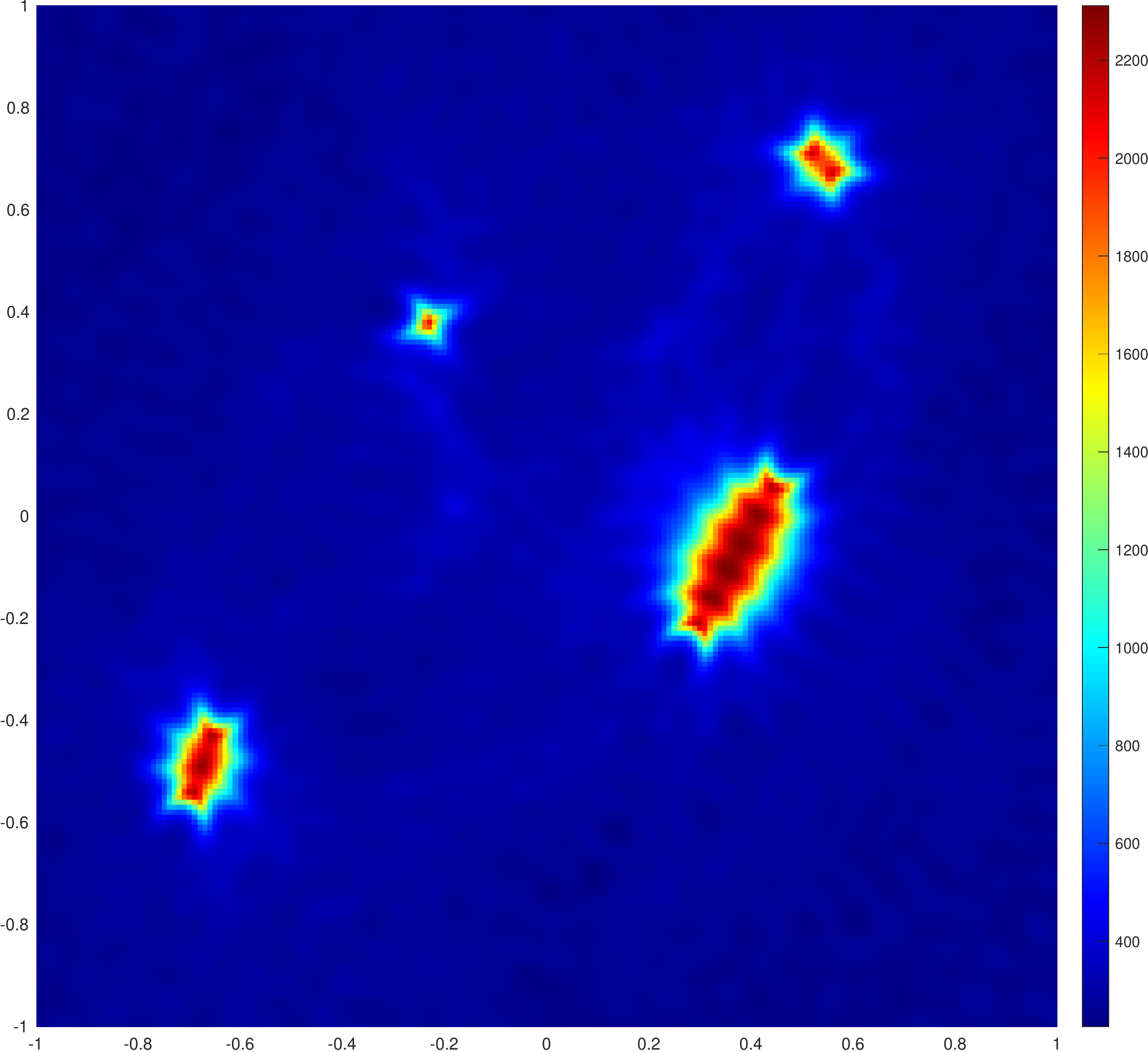}
\centering
\includegraphics[height=3.8cm]{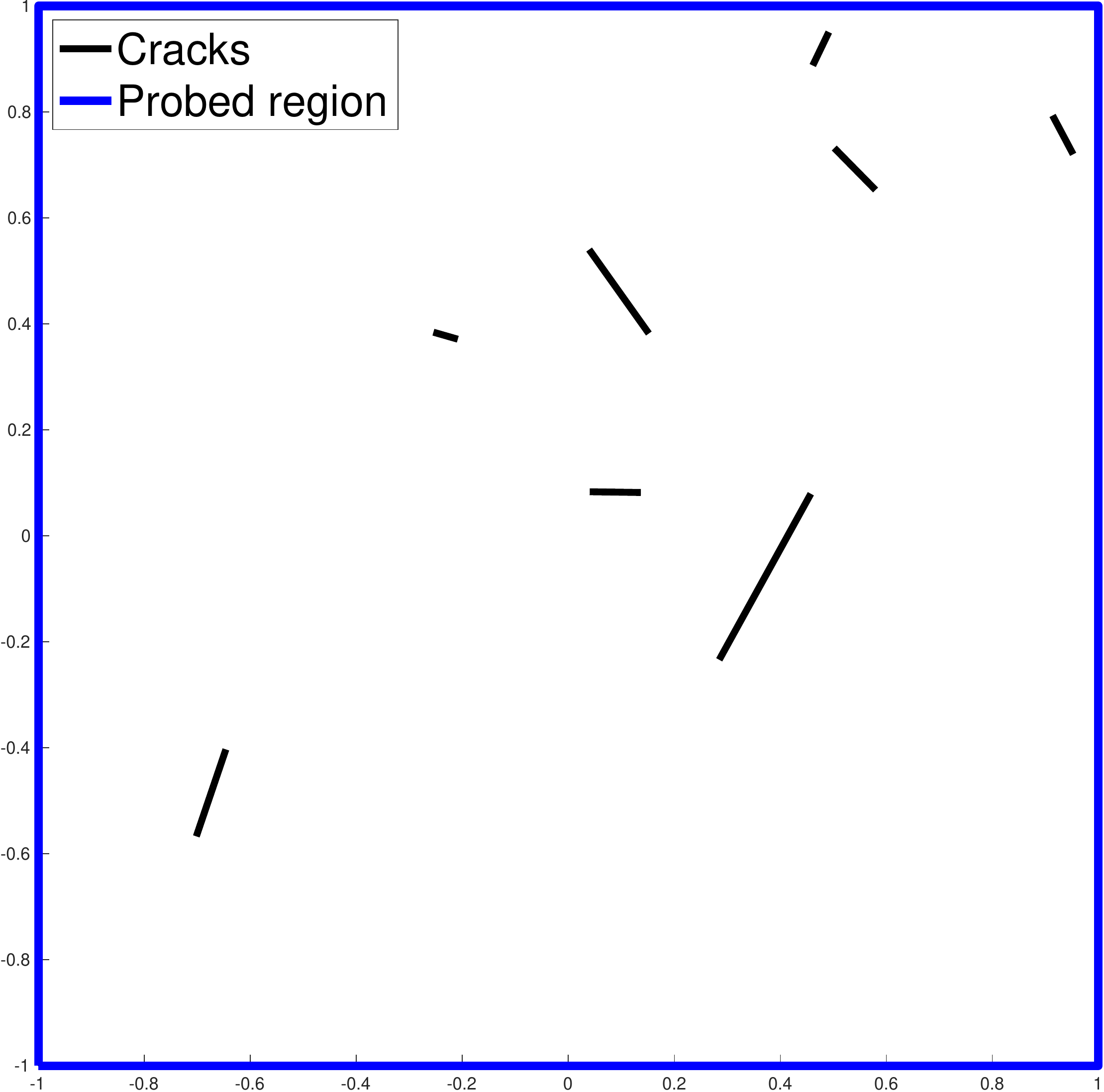}\ 
\includegraphics[height=3.8cm]{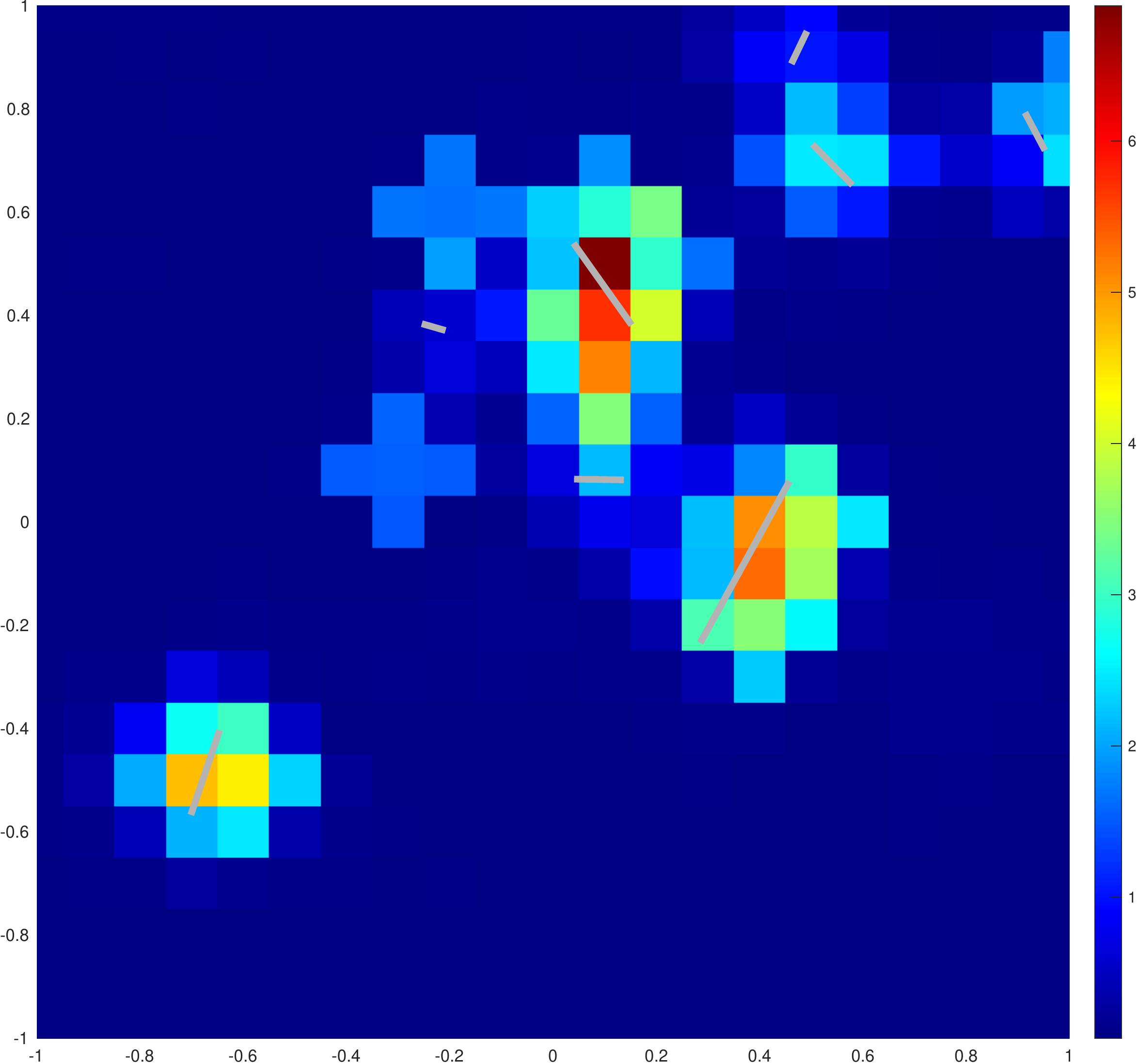}
\includegraphics[height=3.8cm]{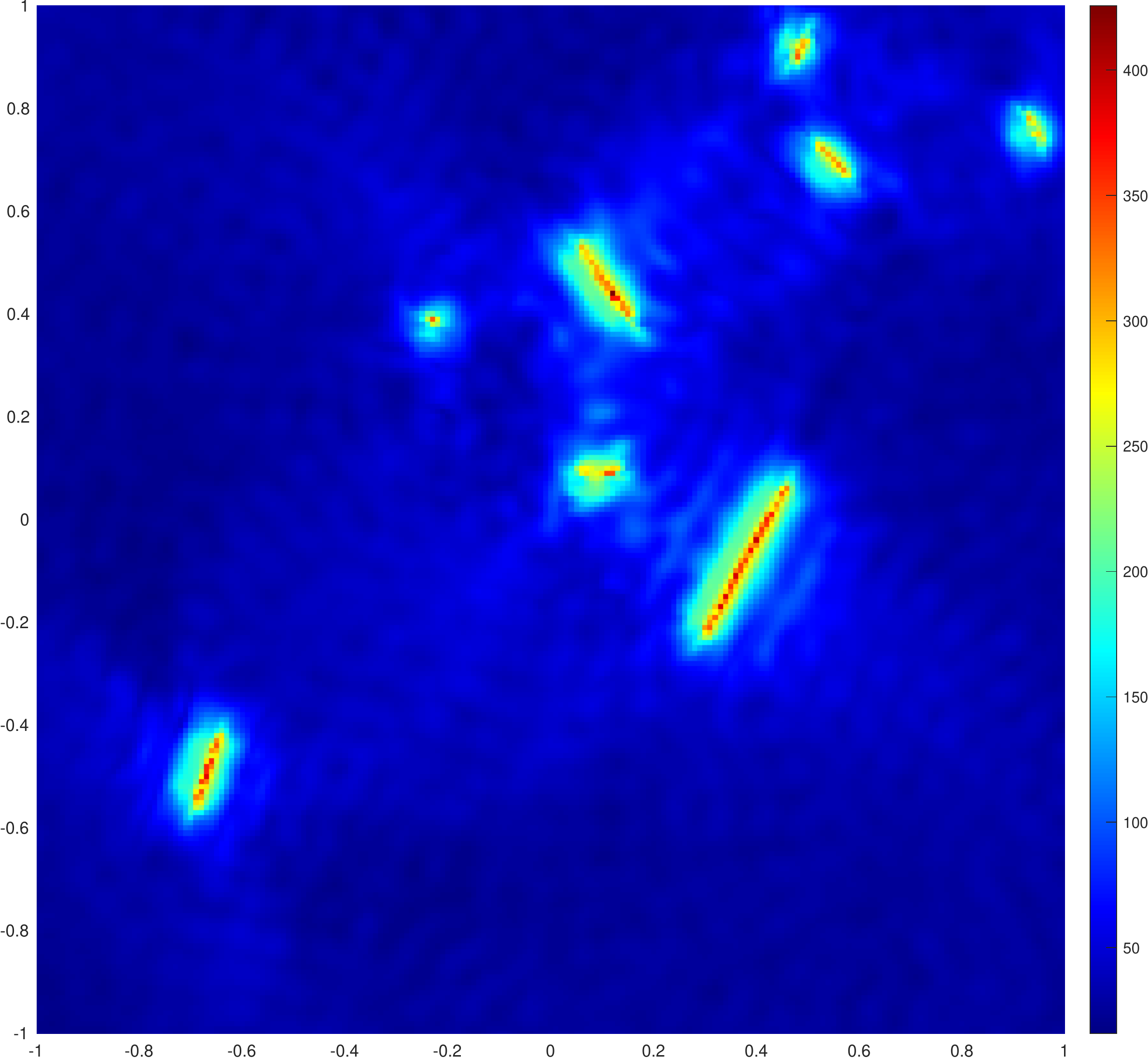}
\includegraphics[height=3.8cm]{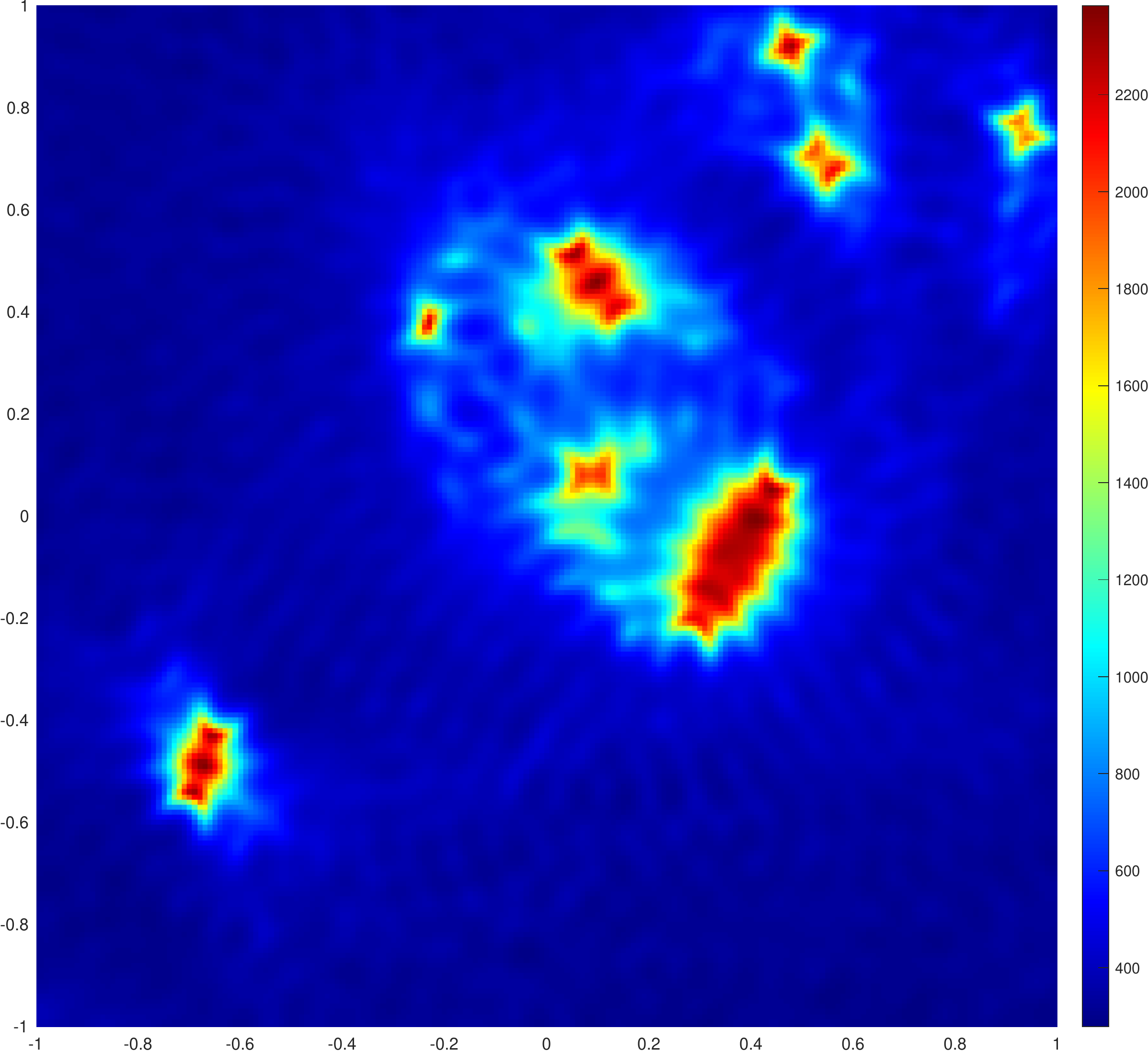}
\centering
\includegraphics[height=3.8cm]{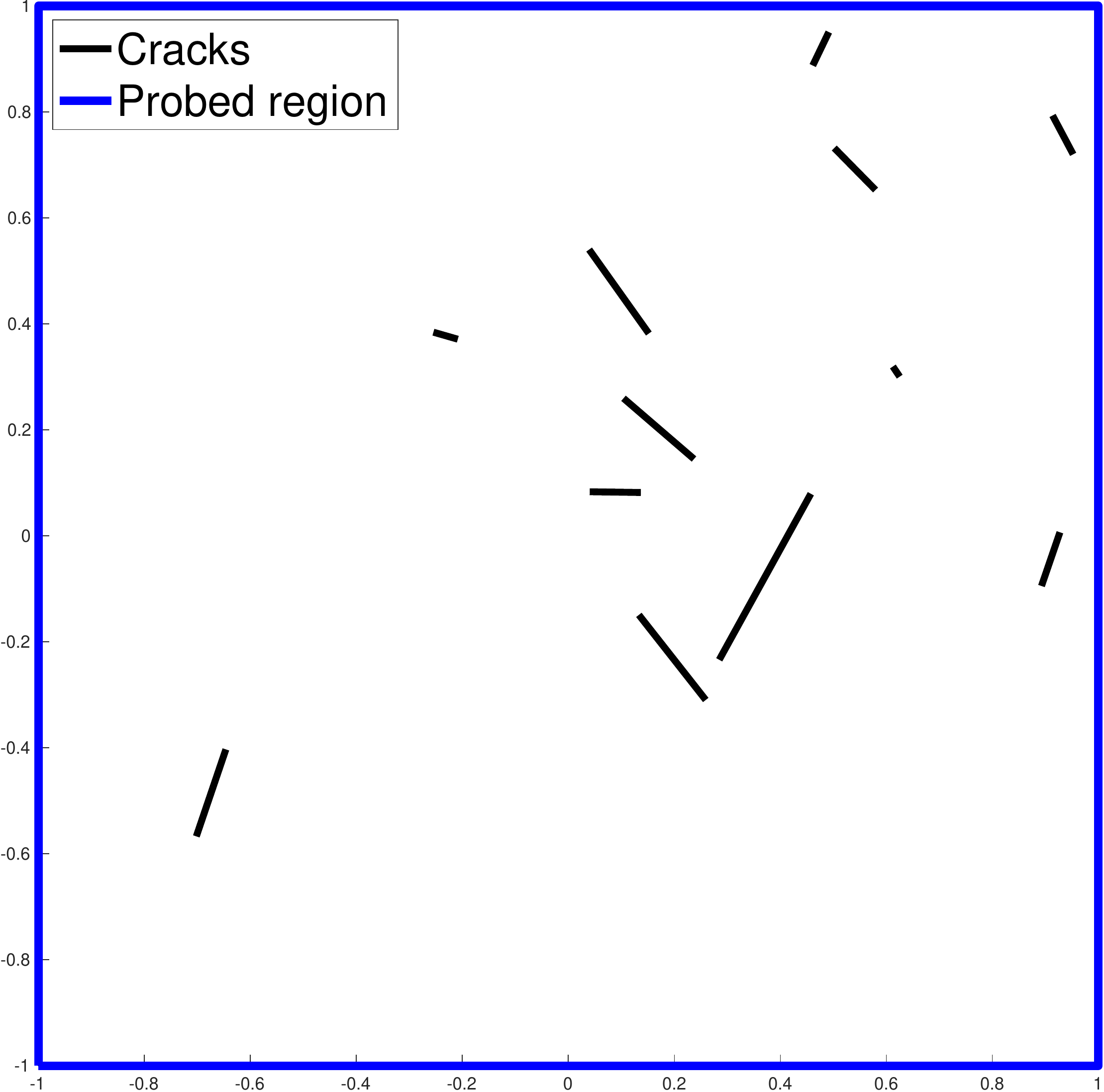}\ 
\includegraphics[height=3.8cm]{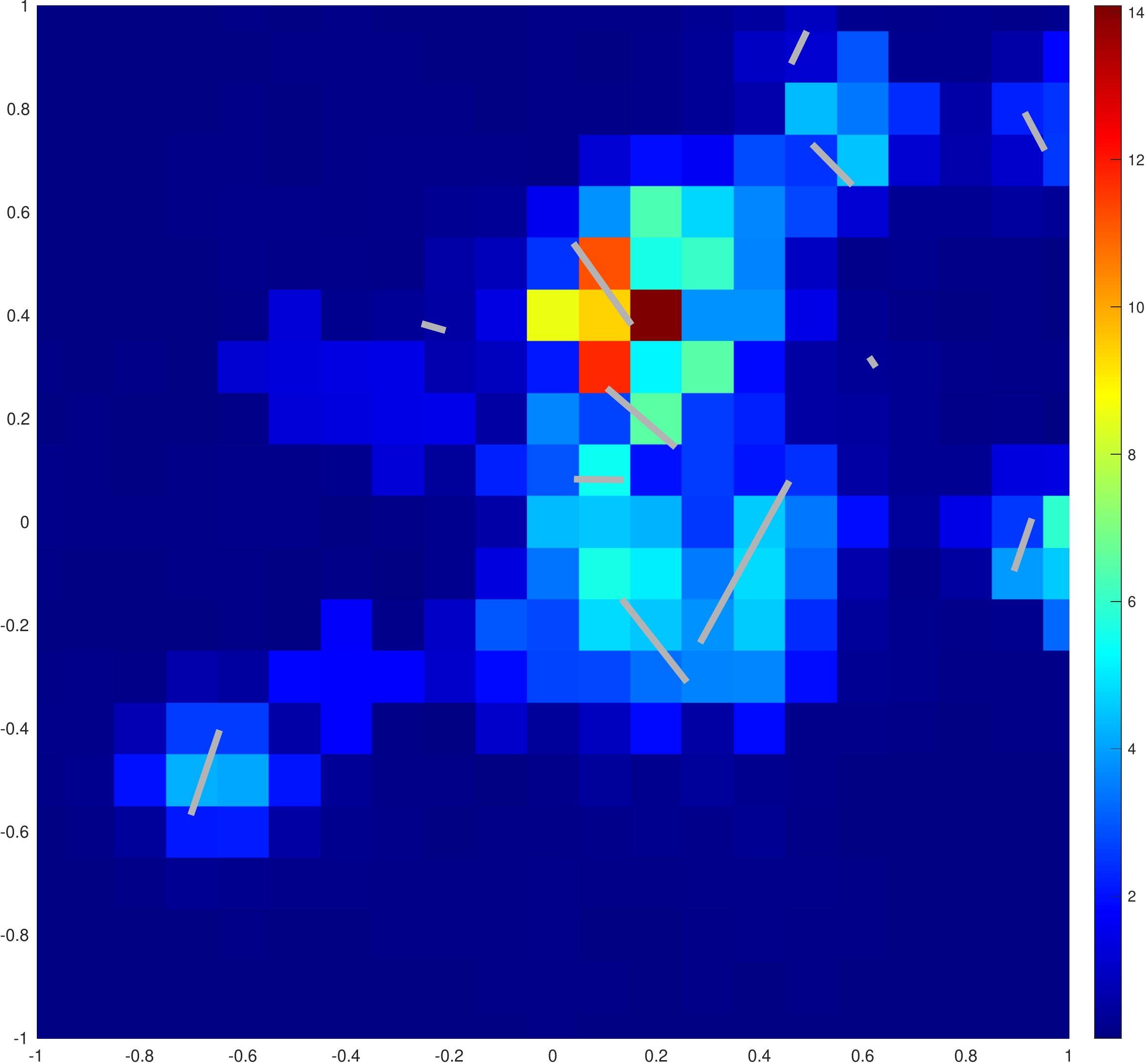}
\includegraphics[height=3.8cm]{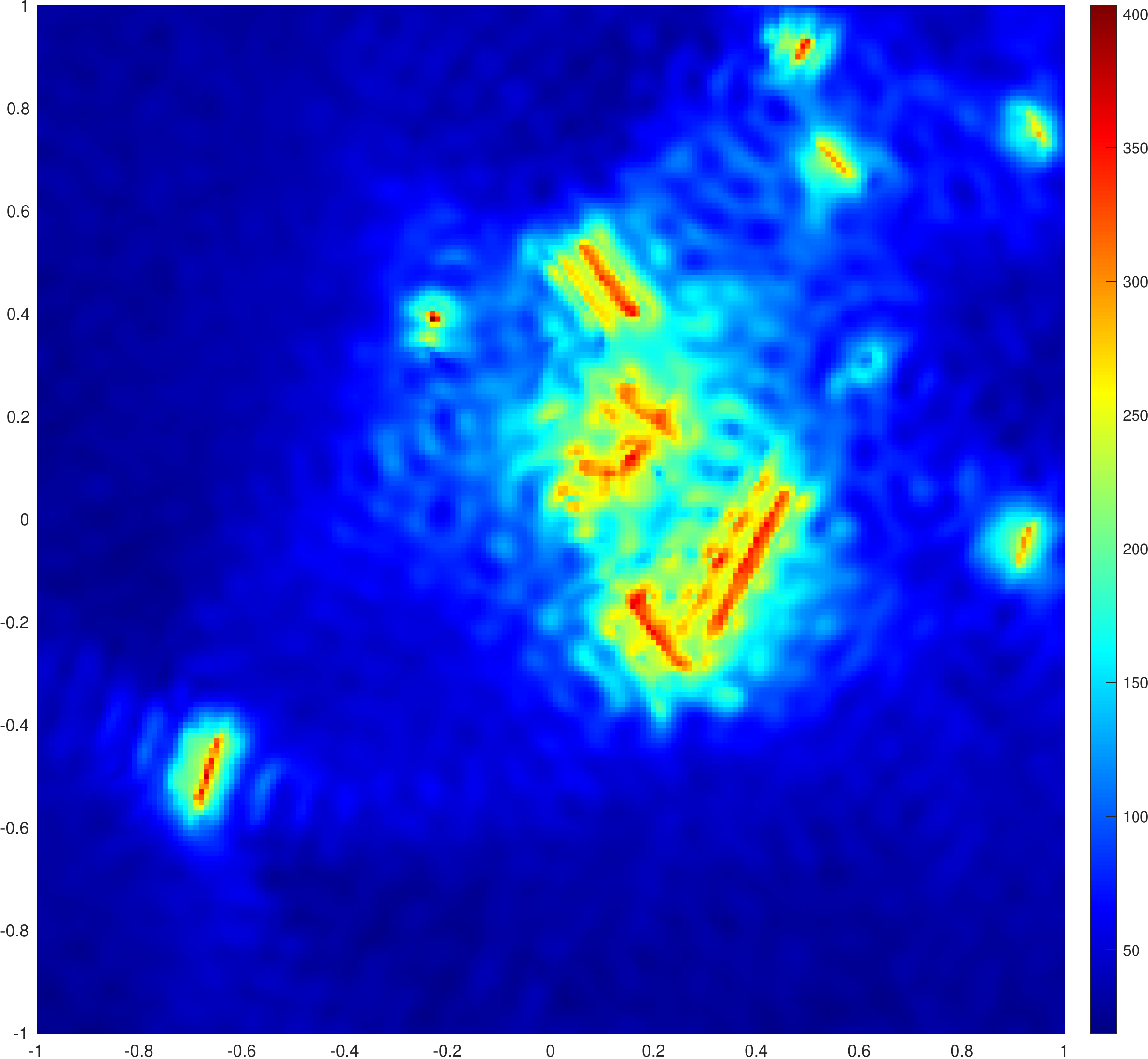}
\includegraphics[height=3.8cm]{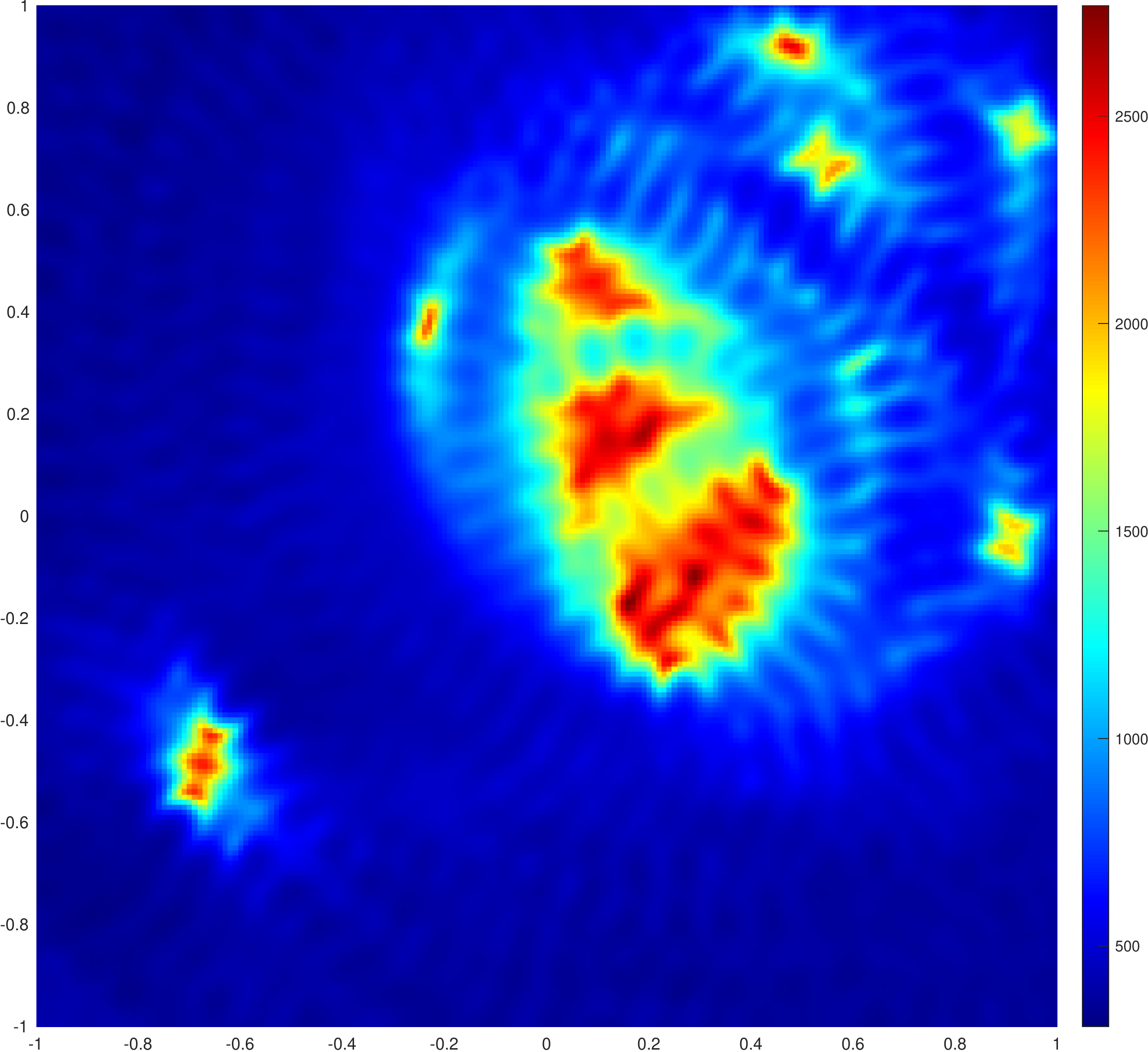}
\centering
\includegraphics[height=3.8cm]{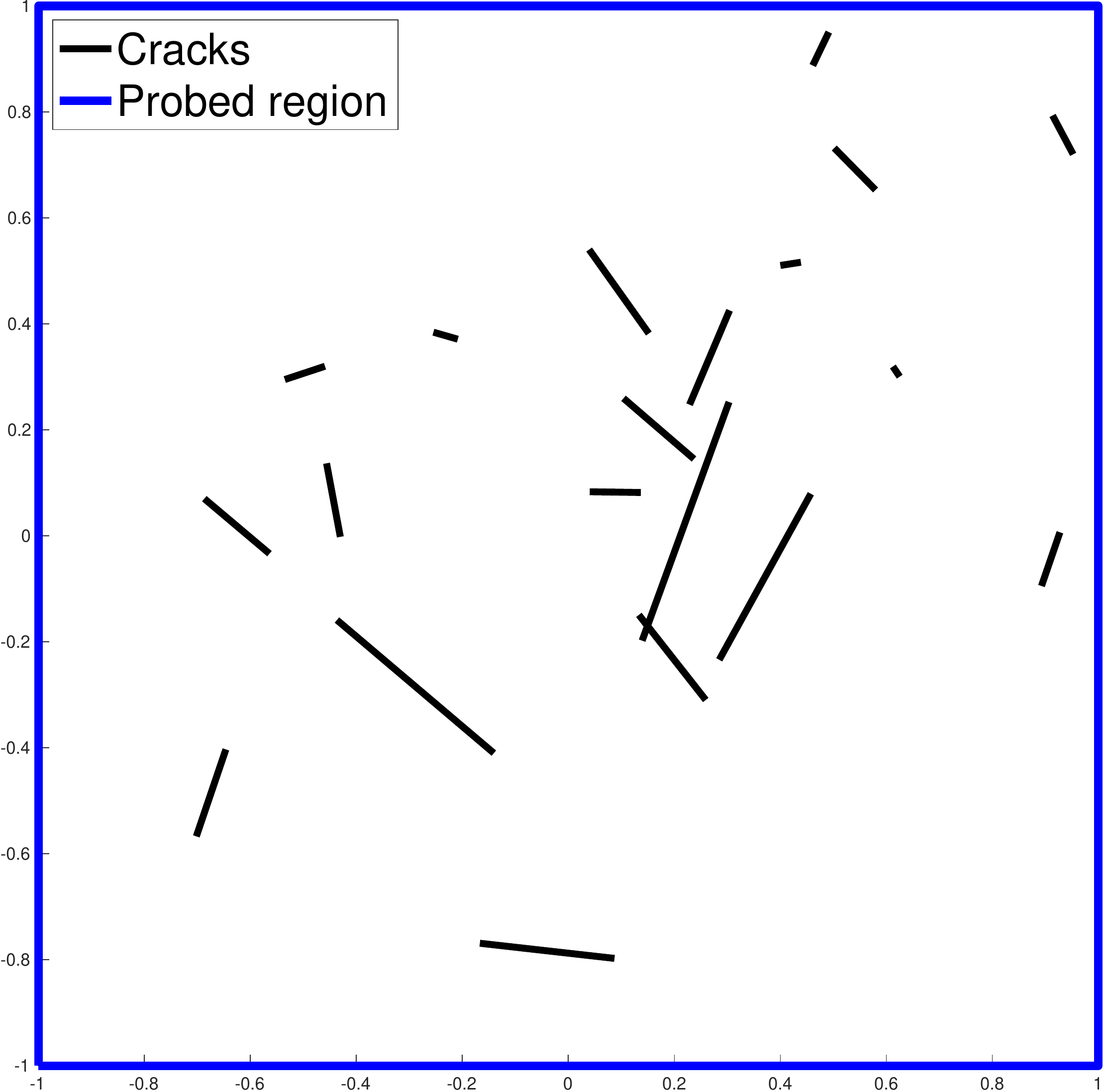}\  \includegraphics[height=3.8cm]{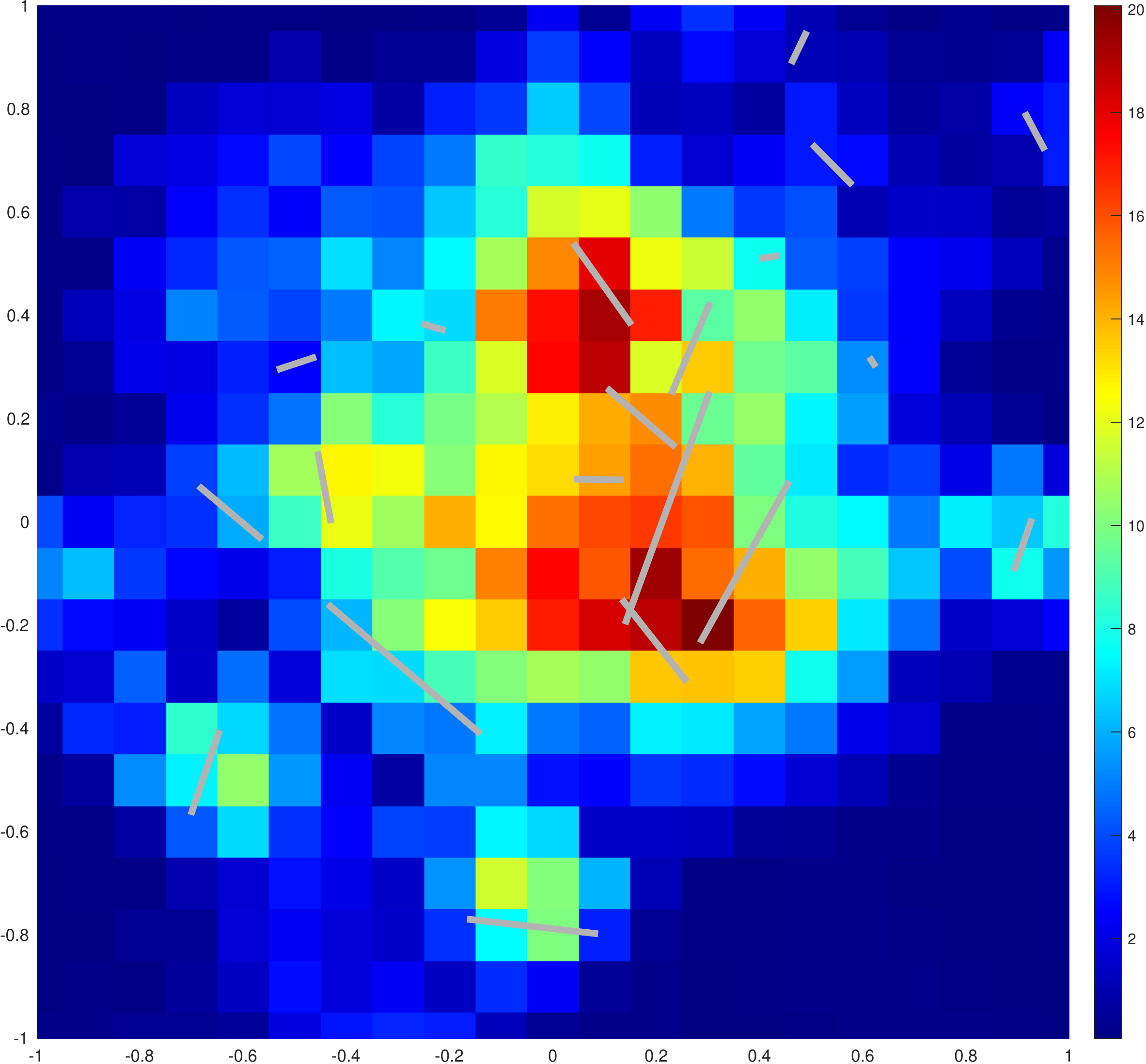}
\includegraphics[height=3.8cm]{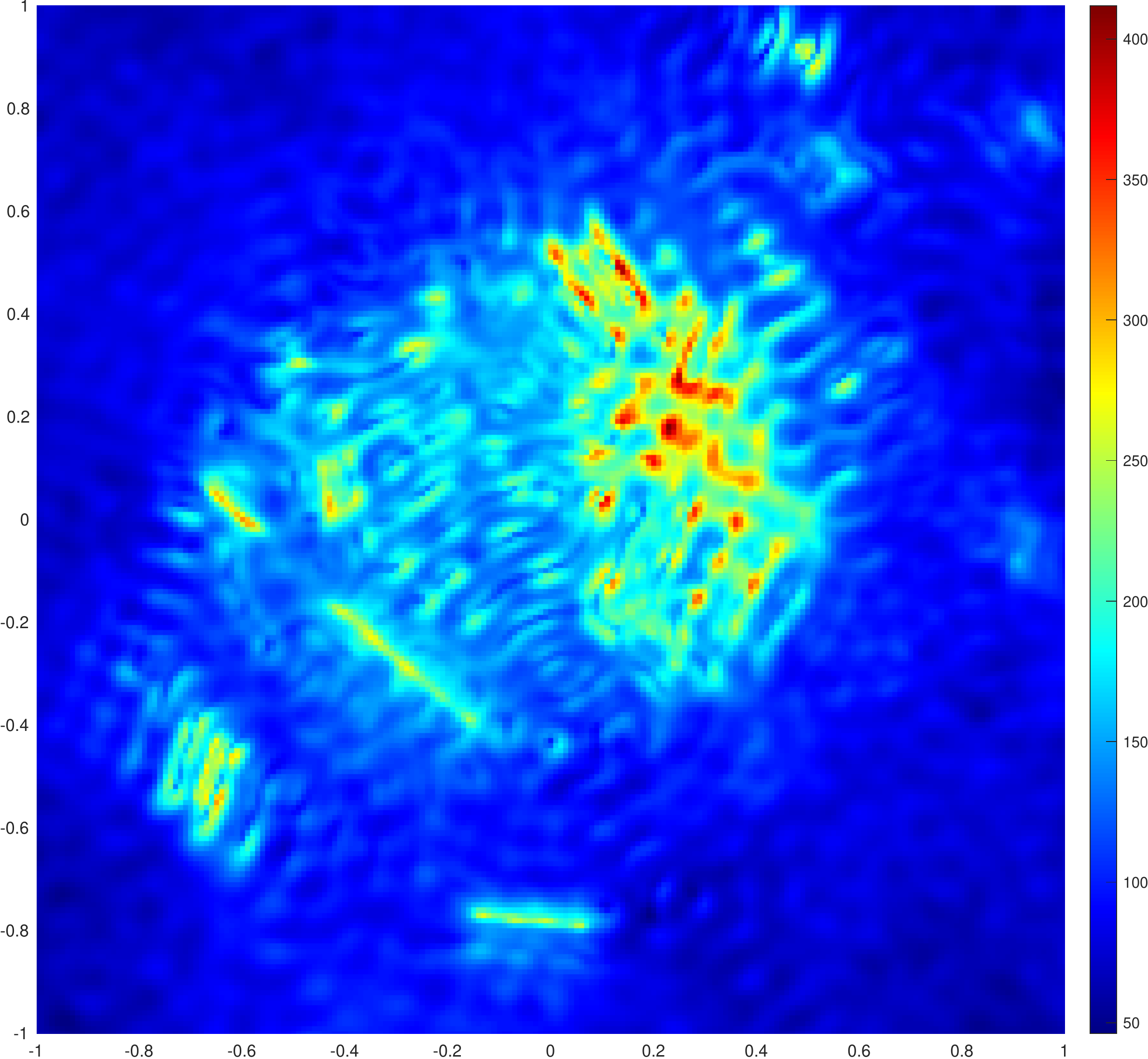}
\includegraphics[height=3.8cm]{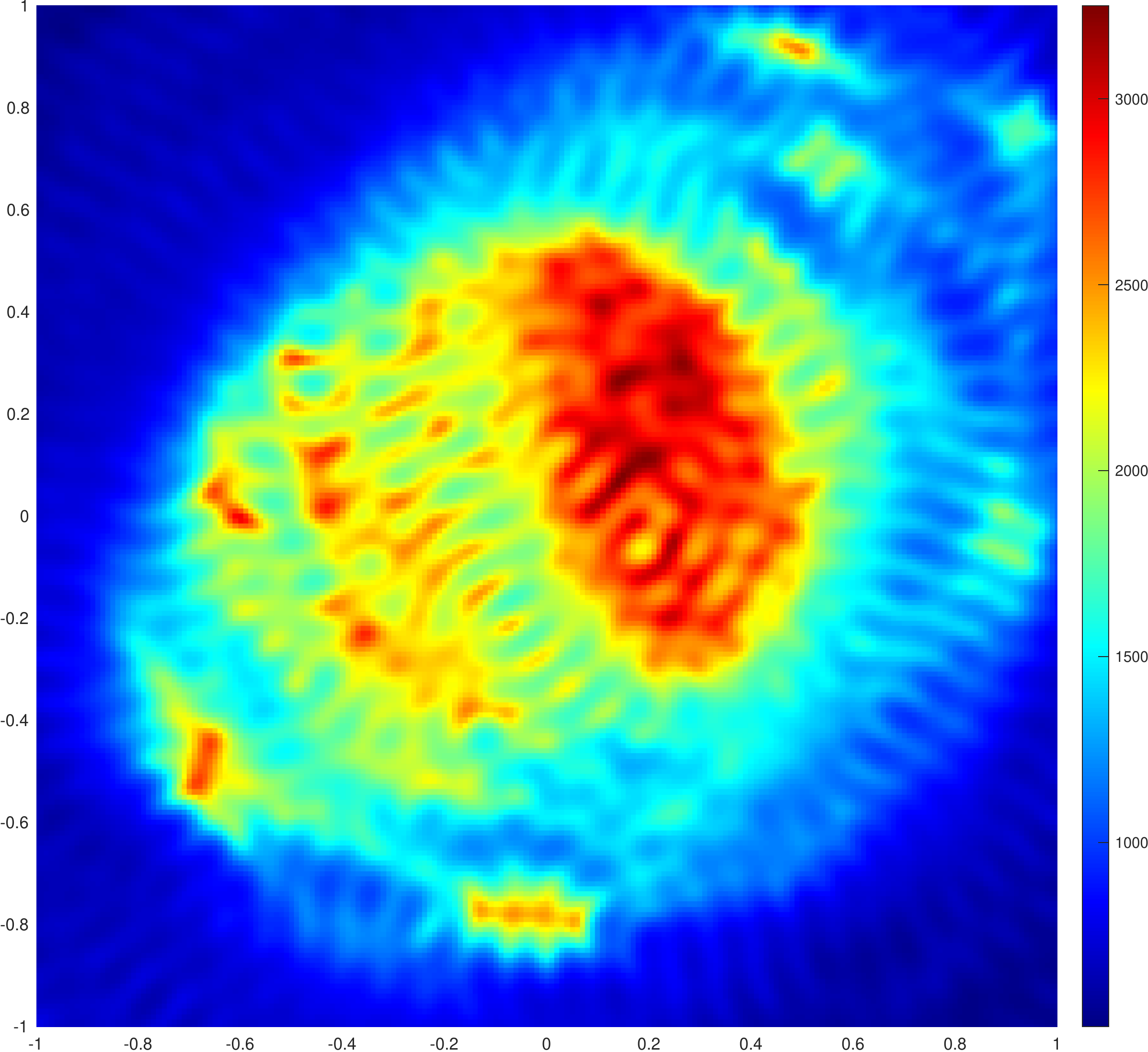}
\centering
\includegraphics[height=3.8cm]{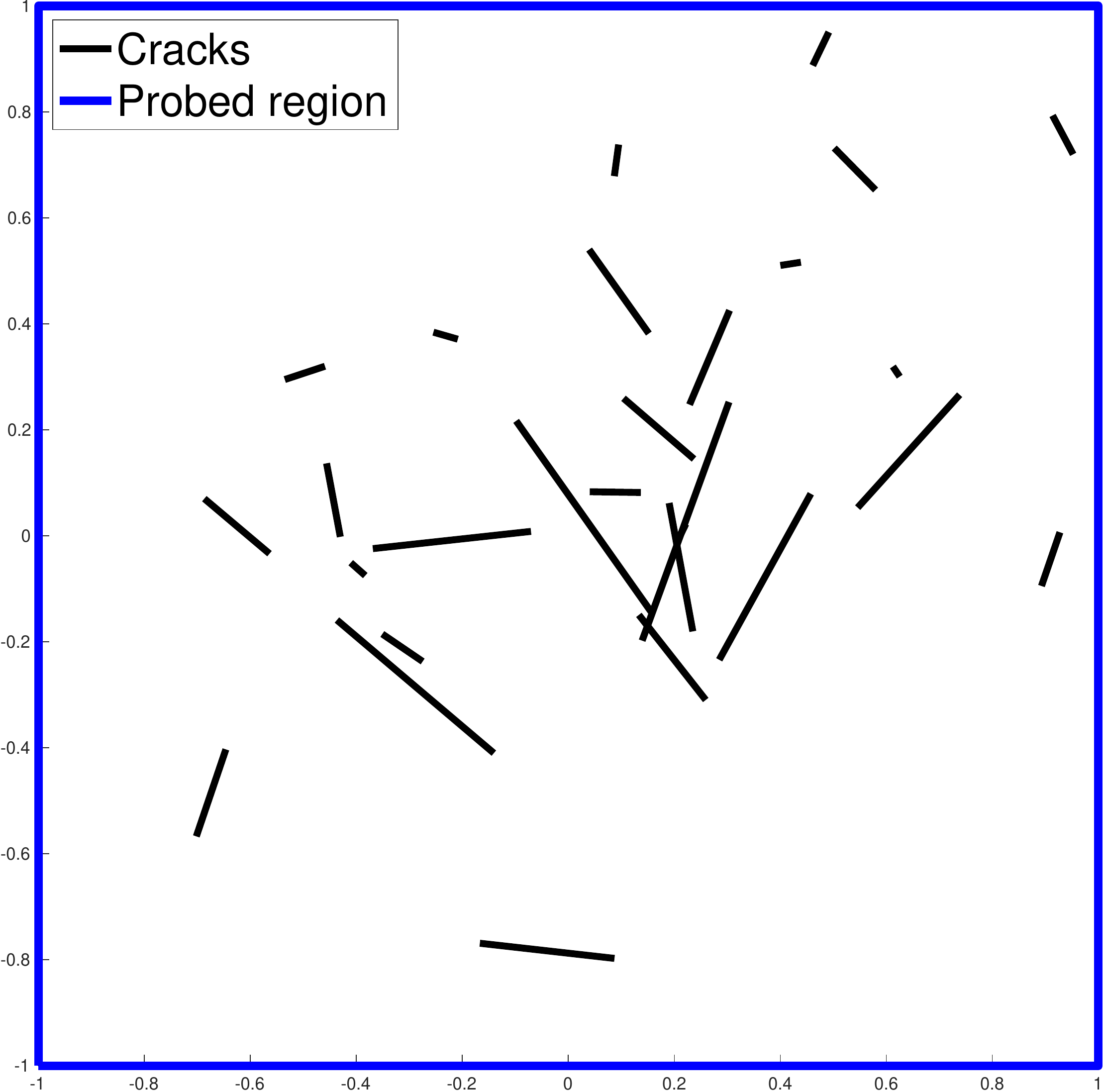}\ 
\includegraphics[height=3.8cm]{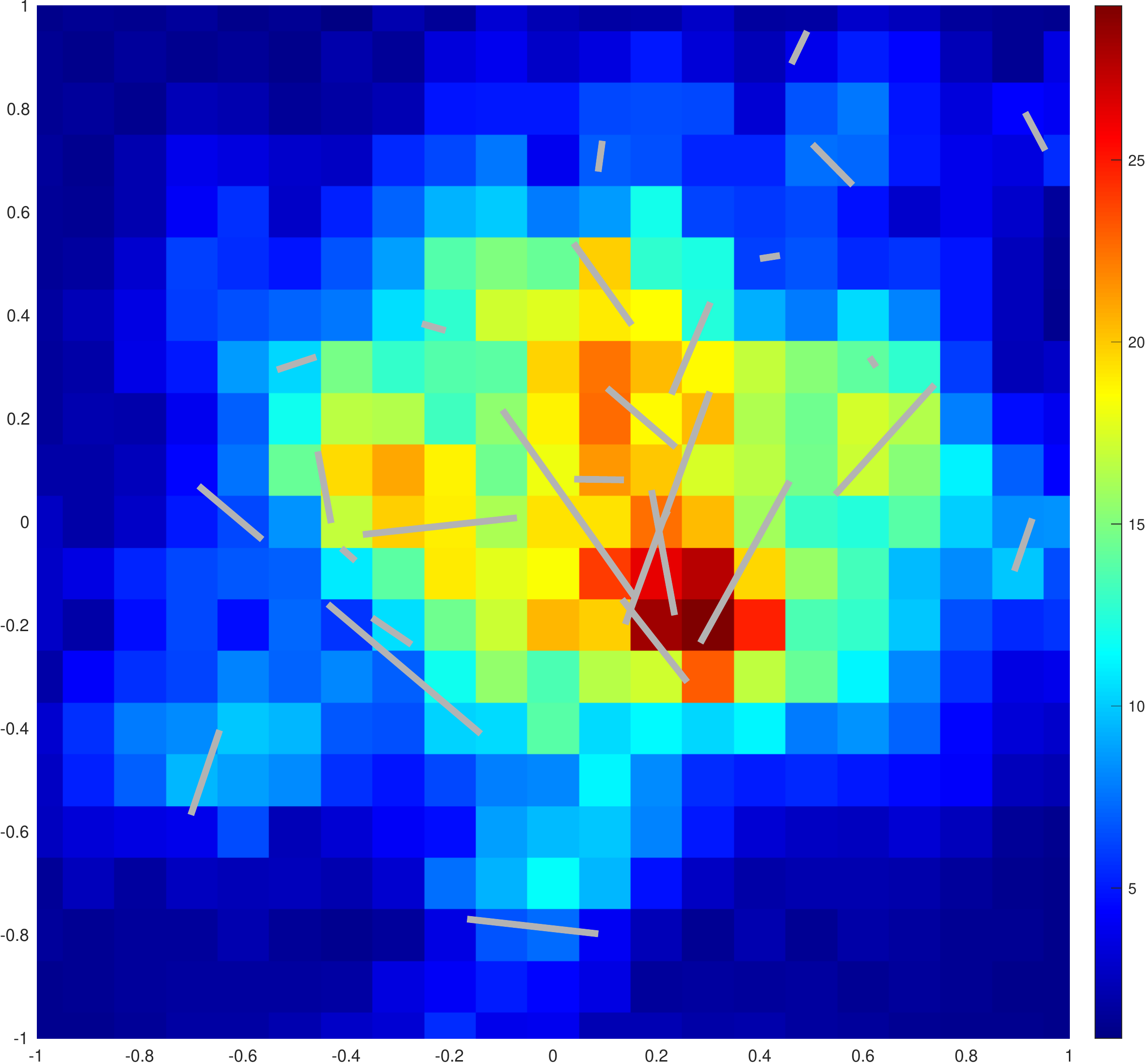}
\includegraphics[height=3.8cm]{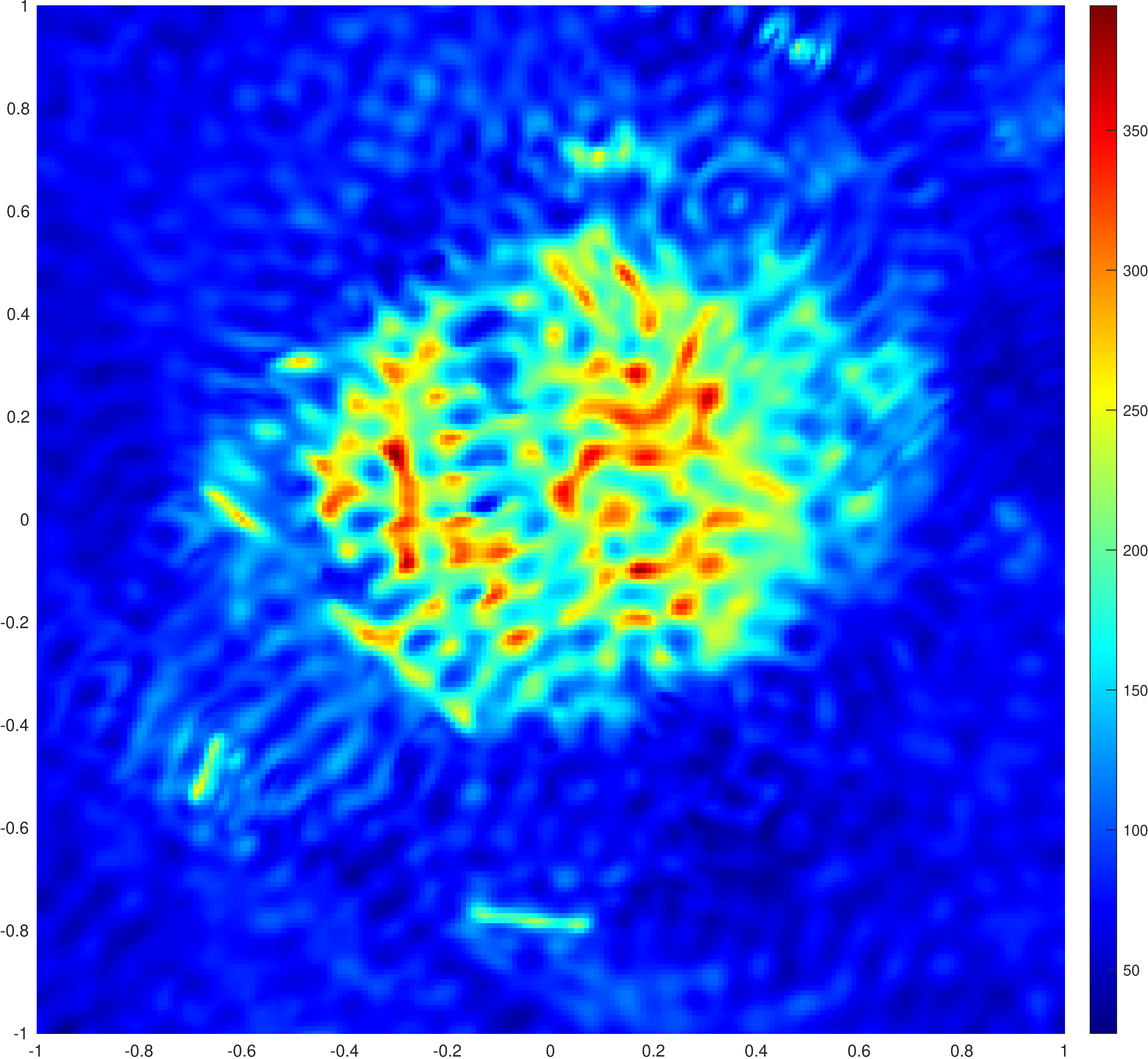}
\includegraphics[height=3.8cm]{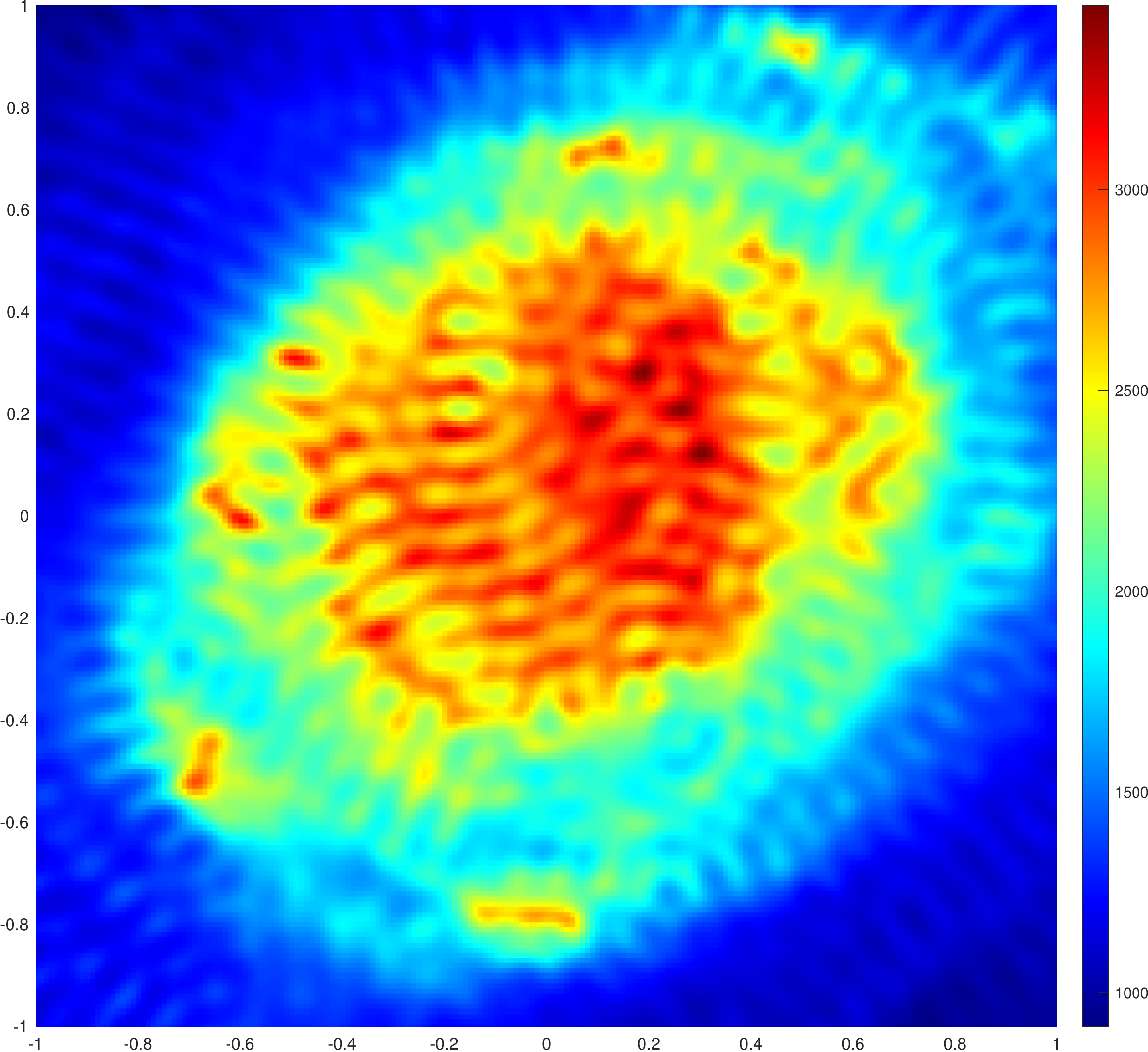}
\caption{From left to right column: exact configuration of the cracks, $\mathcal{I}^n$, $\mathcal{J}^n$ and FM indicator functions. From top to bottom rows, additional cracks are added to the crack network. The synthetic data is corrupted with 1\% random noise. }
\label{Fig:MonoDistrib7}
\end{figure}

\newpage
\section*{Appendix}
In this appendix, we establish a series of technical results which are used in the proofs above. 
\begin{proposition}\label{PropDenseRange}
The operator $H^\rel: \mL^2(\dsphere) \rightarrow \mY$ defined in (\ref{DefHrel}) has dense range.
\end{proposition}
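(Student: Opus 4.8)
The plan is to prove the equivalent statement that the adjoint
$(H^\rel)^\ast : \mH^{1/2}(\partial\Om)\times\mH^{1/2}(\Gamma\cap\Om^c)\to\mL^2(\dsphere)$ is injective. So I would start from a pair $(\phi_1,\phi_2)$ that annihilates the range of $H^\rel$, i.e. such that
$\langle \partial_\nu^+u_b,\phi_1\rangle_{\partial\Om}+\langle\partial_\nu u_b,\phi_2\rangle_{\Gamma\cap\Om^c}=0$
for every $g\in\mL^2(\dsphere)$, where $u_b=v_g+u_{b,s}$ solves (\ref{PbChampTotalFreeSpaceComp}), and aim to show $(\phi_1,\phi_2)=(0,0)$.

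The key step is to identify $(H^\rel)^\ast$ in terms of an auxiliary radiating field. I would introduce the radiating solution $p$ of the Helmholtz equation in $\R^d\setminus(\overline\Om\cup\overline{\Gamma\cap\Om^c})$ prescribed by $p=\phi_1$ on $\partial\Om$ and by the jump conditions $[p]=\phi_2$, $[\partial_\nu p]=0$ across $\Gamma\cap\Om^c$, so that $\phi_2$ enters through a double-layer contribution while the normal derivative stays continuous. Such a $p$ exists and is unique for every $k^2>0$, since the underlying exterior Dirichlet problem is always well posed under the radiation condition; this is why the Proposition needs no Dirichlet-eigenvalue assumption. Applying Green's second identity to $u_b$ and $p$ over $B_R\setminus\overline\Om$ and letting $R\to+\infty$, the boundary term on $\partial\Om$ reduces to $\langle\partial_\nu^+u_b,\phi_1\rangle_{\partial\Om}$ because $u_b=0$ on $\partial\Om$, the two-sided crack term produces $\langle\partial_\nu u_b,\phi_2\rangle_{\Gamma\cap\Om^c}$ because $u_b$ is smooth across $\Gamma\cap\Om^c$, and the contribution at infinity, via the standard reciprocity between Herglotz kernels and far-field patterns (see \cite{Colton-Kress}), produces a constant multiple of $(g,p^\infty(-\cdot))_{\mL^2(\dsphere)}$. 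This identifies $(H^\rel)^\ast(\phi_1,\phi_2)$ with $c\,p^\infty(-\cdot)$ for some nonzero constant $c$.

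With this identification, $(\phi_1,\phi_2)\in\ker (H^\rel)^\ast$ forces $p^\infty\equiv 0$. By Rellich's lemma and the unique continuation principle, $p$ then vanishes throughout the unbounded component of $\R^d\setminus(\overline\Om\cup\overline{\Gamma\cap\Om^c})$. Here I would use that $\Om^c$ is connected and that $\Gamma\cap\Om^c$ is an open portion of $\partial D$ whose closure does not meet $\partial\Om$; being a (hyper)surface with boundary that does not enclose any region, it does not disconnect the complement, so both faces of $\Gamma\cap\Om^c$ and the exterior side of $\partial\Om$ are reached by the region where $p=0$. Reading off the traces then yields $\phi_2=[p]=0$ and $\phi_1=p|_{\partial\Om}=0$, which proves that $(H^\rel)^\ast$ is injective and hence that $H^\rel$ has dense range.

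The main obstacle is the clean identification of the adjoint: one must choose the correct layer-potential structure for $p$ (Dirichlet datum on $\partial\Om$, value jump with continuous normal derivative on the crack) so that Green's identity reproduces \emph{exactly} the two duality pairings defining $H^\rel$, and one must treat the passage to the limit at infinity carefully, since the incident part $v_g$ of $u_b$ does not radiate. The remaining geometric bookkeeping—connectedness of the exterior after removing the crack, and the fact that an open surface-with-boundary does not disconnect $\R^d$—is routine but should be stated, as the final jump and trace conclusions rely on it.
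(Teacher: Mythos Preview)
Your approach is essentially the paper's: the auxiliary radiating field $p$ coincides with the paper's $w_h$ (same Dirichlet datum on $\partial\Om$, same value-jump with continuous normal derivative on $\Gamma\cap\Om^c$), and both arguments conclude by showing its far field vanishes---the only cosmetic difference is that the paper applies Green's identity to the radiating pair $(w_h,u_{b,s})$ and subtracts, whereas you pair $p$ with $u_b$ and invoke the Herglotz/far-field reciprocity at infinity. One minor correction: the dual of $\mH^{-1/2}(\Gamma\cap\Om^c)$ is $\tilde\mH^{1/2}(\Gamma\cap\Om^c)$ (restrictions of $\mH^{1/2}(\partial D)$ functions supported in $\overline{\Gamma\cap\Om^c}$), not $\mH^{1/2}(\Gamma\cap\Om^c)$, and this is what makes the jump problem defining $p$ well posed.
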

\begin{proof}
Denote by $\tilde \mH^{1/2}(\Gamma\cap\Omega^c)$ the dual space of $\mH^{-1/2}(\Gamma\cap\Omega^c)$ that can be defined as the set of the restrictions to  $\Gamma\cap\Omega^c$ of functions in $\mH^{1/2}(\partial D)$ that are supported in $\Gamma\cap\Omega^c$. Consider $(h_1, h_2) \in  \mH^{1/2}(\partial\Om) \times \tilde \mH^{1/2}(\Gamma\cap\Omega^c)$ such that 
\begin{equation} \label{dif1}
\int_{\partial \Omega} h_1 \, \partial_\nu u_b\,ds + \int_{\Gamma\cap\Omega^c} h_2 \, \partial_\nu u_b\,ds =0
\end{equation}
for all $g\in\mL^2(\dsphere)$. Here $u_b$ is the solution of (\ref{PbChampTotalFreeSpaceComp}) with $u_i=v_g$ and the integrals have to be understood as duality products. Now introduce $w_h$ the unique solution to
\begin{equation}\label{PbAdjoint}
\begin{array}{|rcllrcll}
\Delta w_h+k^2\,w_h & = & 0 & \mbox{ in }\R^d\setminus(\Omegabar\cup{\Gammabar}) & \\[3pt]
w_h &=& h_1 & \mbox{ on } \d\Omega& \\[3pt]
\left[ w_h\right] &=& h_2  & \mbox{ on } \Gamma \cap\Omega^c
\\[3pt]
\left[ \partial_{\nu}w_h\right] &=& 0  & \mbox{ on } \Gamma \cap\Omega^c \\[3pt]
\multicolumn{4}{|l}{+\ \mbox{Radiation condition}}
\end{array}
\end{equation}
which is in $\mH^1(\mathcal{O})$ for all bounded domain $\mathcal{O}\subset\R^d\setminus(\Omegabar\cup{\Gammabar})$. 
Setting $u_{b,s}= u_b - v_g$, then we have, thanks to the radiation condition,
\begin{equation} \label{dif2}
\int_{\partial \Omega}    w_h \, \partial_\nu u_{b,s}  - \partial_\nu w_h  u_{b,s} \,ds + \int_{\Gamma\cap\Omega^c} [w_h] \, \partial_\nu u_{b,s}  ds =0.
\end{equation}
Computing the difference (\ref{dif1})-(\ref{dif2}) and using that $u_b=0$ on $\partial \Omega$ so that $u_{b,s}=-v_g$ on $\partial \Omega$, we get
\[
\int_{\partial \Omega}    w_h \, \partial_\nu v_g  - \partial_\nu w_h  v_g\,ds + \int_{\Gamma\cap\Omega^c} [w_h] \, \partial_\nu v_g  ds =0
\]
for all $g\in \mL^2(\dsphere)$. Using the definition \eqref{HerglotzWave} of $v_g$ and inverting the order of integration over $\dsphere \times \partial \Omega \cup \Gamma$ show that the far field of $w_h$ vanishes. This implies $w_h=0$ in $\R^d\setminus(\Omegabar\cup{\Gammabar})$ and from (\ref{PbAdjoint}), we infer that $h_1 = 0$ and $h_2 =0$. This is enough to conclude to the desired density result.
\end{proof}

\begin{proposition}\label{PropCompactness}
The operator $G^\rel: \mY\rightarrow\mL^2(\dsphere)$ defined in (\ref{DefHrel}) is compact.
\end{proposition}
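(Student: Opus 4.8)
The plan is to factor $G^\rel$ as the composition of the (bounded) solution operator of problem \eqref{eq:Def_G+AB} with a far field map whose kernel is smooth in the observation direction, and then to deduce compactness from the smoothing this induces. First I would recall that, for each $(\psi_1,\psi_2)\in\mY$, problem \eqref{eq:Def_G+AB} is well posed, so the solution map $(\psi_1,\psi_2)\mapsto w$ is bounded from $\mY$ into $\mH^1(\mathcal{O}\setminus\Gammabar)$ for every bounded domain $\mathcal{O}\subset\R^d$ (the same integral-equation analysis already invoked for Proposition \ref{Prop:LSM+AB}). Fix $R>0$ large enough that $\Omegabar\cup\Gammabar$ is contained in the ball $B_R$, and let $S_R:=\{|x|=R\}$. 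Since $S_R$ stays away from $\partial\Om$ and $\Gammabar$, interior elliptic regularity shows that $w$ is smooth in a fixed annular neighborhood $A$ of $S_R$ and that the traces $w|_{S_R}$, $\d_\nu w|_{S_R}$ depend continuously on $\|w\|_{\mH^1(A)}$. Hence $(\psi_1,\psi_2)\mapsto (w|_{S_R},\d_\nu w|_{S_R})$ is bounded from $\mY$ into $\mL^2(S_R)\times\mL^2(S_R)$, with all tangential derivatives likewise controlled.

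Next I would use the Green representation of the far field. Because $w$ is a radiating solution of the Helmholtz equation outside $B_R$, the far field pattern $w^\infty=G^\rel(\psi_1,\psi_2)$ admits, for a dimensional constant $c_d$, the representation
\[
w^\infty(\theta_s)=c_d\int_{S_R}\Big(w(y)\,\d_{\nu(y)}e^{-ik\theta_s\cdot y}-\d_\nu w(y)\,e^{-ik\theta_s\cdot y}\Big)\,ds(y),\qquad \theta_s\in\dsphere.
\]
The key observation is that the kernel $\theta_s\mapsto e^{-ik\theta_s\cdot y}$ and its normal derivative in $y$ extend to entire functions of $\theta_s$, with all $\theta_s$-derivatives bounded uniformly for $y\in S_R$. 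Differentiating under the integral sign then shows that $w^\infty\in C^\infty(\dsphere)$ and that, for every $s\ge 0$,
\[
\|w^\infty\|_{\mH^s(\dsphere)}\le C_s\left(\|w\|_{\mL^2(S_R)}+\|\d_\nu w\|_{\mL^2(S_R)}\right)\le C_s'\,\|(\psi_1,\psi_2)\|_{\mY},
\]
so that $G^\rel$ is bounded from $\mY$ into $\mH^1(\dsphere)$ (take $s=1$).

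Putting these together, I would conclude by the Rellich--Kondrachov theorem: since $\dsphere$ is a smooth compact manifold, the embedding $\mH^1(\dsphere)\hookrightarrow\mL^2(\dsphere)$ is compact, and the composition of the bounded operator $G^\rel:\mY\to\mH^1(\dsphere)$ with this compact embedding is compact, which gives the claim. The only points requiring care are the boundedness of the solution map for \eqref{eq:Def_G+AB} on $\mY$ and the justification of the far field representation on $S_R$; the latter is standard once $S_R$ encloses all the inhomogeneities, since $w$ then solves the homogeneous Helmholtz equation with the Sommerfeld condition outside $B_R$. I do not expect a genuine obstacle here: the compactness stems entirely from the analytic smoothing of the far field map, and the remaining work is bookkeeping of the trace estimates on $S_R$.
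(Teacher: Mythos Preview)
Your proposal is correct and follows essentially the same route as the paper: both exploit the far field representation on a large sphere $S_R$ enclosing $\Omegabar\cup\Gammabar$, combine it with interior elliptic regularity for $w$ near $S_R$, and deduce compactness from the smoothness of the kernel $\theta_s\mapsto e^{-ik\theta_s\cdot y}$. The only cosmetic difference is that the paper records an $\mH^3$ interior estimate and leaves the last step implicit, whereas you make the compact embedding $\mH^1(\dsphere)\hookrightarrow\mL^2(\dsphere)$ explicit; the substance is the same.
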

\begin{proof}
For a given $\psi=(\psi_1,\psi_2)\in\mY$, denote $w$ the solution of (\ref{eq:Def_G+AB}) which is in $\mH^1(\mathcal{O}\setminus\overline{\Gamma})$ for all bounded domain $\mathcal{O}\subset\R^d$. Its far field pattern $w^{\infty}$ admits the representation (see e.g. \cite{Colton-Kress})
\begin{equation}\label{IdentityFarField}
w^{\infty}(\theta_s)=\int_{|x|=R}w\partial_r(e^{-ik\theta_s\cdot x})-\partial_rw\,e^{-ik\theta_s\cdot x}\,ds(x),
\end{equation}
where $R$ is such that $\Gammabar\subset B(O,R)$. On the other hand, classical results of interior regularity and the well-posedness of (\ref{eq:Def_G+AB}) guarantee that for all bounded domain $\mathcal{O}$ which does not meet $\partial\Om\cup\Gammabar$, we have
\begin{equation}\label{EstimReg}
\|w\|_{\mH^3(\mathcal{O})} \le C\,\|\psi\|_{\mY}.
\end{equation}
Using (\ref{EstimReg}) in (\ref{IdentityFarField}) allows one to conclude that the operator $G^\rel:\psi\mapsto w^{\infty}$ is compact from $\mY$ to $\mL^2(\dsphere)$. 
\end{proof}

\begin{proposition}\label{densityFrel}
Assume that $k^2$ is not a Relative Non Scattering Eigenvalue. Then the operator $F^\rel: \mL^2(\dsphere) \rightarrow \mL^2(\dsphere)$ defined in (\ref{def:RelFF}) has dense range.
\end{proposition}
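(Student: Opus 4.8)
The plan is to establish density by showing that the annihilator of $\mrm{Range}\,F^\rel$ is trivial, and to reduce this to a statement about the Herglotz wave $v_g$ alone. Since $F^\rel=G^\rel H^\rel$ and $H^\rel$ has dense range by Proposition \ref{PropDenseRange}, the continuity of $G^\rel$ gives $\overline{\mrm{Range}\,F^\rel}=\overline{\mrm{Range}\,G^\rel}$; hence it suffices to prove that $G^\rel:\mY\to\mL^2(\dsphere)$ has dense range, i.e. that $\ker(G^\rel)^\ast=\{0\}$. (Equivalently, one may invoke the reciprocity relation $u_s^\infty(\theta_s,\theta_i)=u_s^\infty(-\theta_i,-\theta_s)$, valid both for the cracks and for the Dirichlet obstacle $\Om$, to write $(F^\rel)^\ast=\mathcal{R}\,F^\rel\,\mathcal{R}$ with $(\mathcal{R}g)(\theta)=\overline{g(-\theta)}$, turning the claim into the injectivity of $F^\rel$ itself; I prefer the $G^\rel$ route because its adjoint produces conditions on $v_g$ directly.)

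Next I would compute $(G^\rel)^\ast$ explicitly. Fix $g\in\mL^2(\dsphere)$ and $\psi\in\mY$, let $w$ solve \eqref{eq:Def_G+AB} with data $\psi$, and start from the far field representation of $w^\infty=G^\rel\psi$ on a large sphere. Applying Green's second identity over $\R^d$ minus the interfaces $\d\Om\cup\Gammabar$ and using that $v_g$ is smooth while $w$ carries exactly the jumps prescribed in \eqref{eq:Def_G+AB} (namely $[\d_\nu w]=-\psi_1$ on $\d\Om$, the homogeneous Neumann condition $\d_\nu^\pm w=0$ built in on $\Gamma\cap\Om$, and $\d_\nu^\pm w=-\psi_2$ on $\Gamma\cap\Om^c$), the volume term vanishes and the boundary contributions collapse to pairings of $\psi$ against traces of $v_g$. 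To remove the solution-dependent jump $[w]$ that survives on the cracks, I would introduce the auxiliary field obtained by scattering the conjugated Herglotz wave $\overline{v_g}$ by the interior cracks $\Gamma\cap\Om$ only (the homogeneous part of the operator); this produces a clean expression in which $(G^\rel)^\ast g$ is encoded by the Dirichlet trace on $\d\Om$ and the Neumann trace on $\Gamma\cap\Om^c$ of that auxiliary field.

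Finally I would read off the kernel. The vanishing of $(G^\rel)^\ast g$ forces the auxiliary field to have zero Cauchy data on $\d\Om$ and zero Neumann trace on the exterior cracks; uniqueness of the exterior problem makes it vanish in $\R^d\setminus\overline\Om$, and unique continuation across $\d\Om$ (which $\Gamma$ does not meet, by the standing assumption $\overline\Gamma\cap\d\Om=\emptyset$) shows that inside $\Om$ the Herglotz wave associated with $g$ satisfies $v_g=0$ on $\d\Om$ and $\d_\nu^\pm v_g=0$ on $\Gamma\cap\Om$. In other words $v_g|_{\Om\setminus\Gammabar}$ is an eigenfunction of \eqref{eq:RTEPb+AB} of Herglotz form, so $k^2$ would be a RNSE in the sense of Definition \ref{DefinitionRNSE}; since by hypothesis it is not, and since the degenerate case $v_g|_\Om\equiv0$ forces $v_g\equiv0$ (hence $g=0$) as $k^2$ is not a Dirichlet eigenvalue, we conclude $g=0$. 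Thus $(G^\rel)^\ast$ is injective and $\mrm{Range}\,F^\rel$ is dense.

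The step I expect to be the main obstacle is the explicit adjoint computation: tracking, through Green's identity, the jump conditions on $\d\Om$ and on the interior and exterior portions of $\Gamma$, and choosing the auxiliary field so that $(G^\rel)^\ast g=0$ translates precisely into the homogeneous transmission problem for $v_g$. This is exactly the point at which the definition of a RNSE is tailored to capture the obstruction to density, and the delicate issue is to confirm that a nontrivial kernel element yields an eigenfunction that is genuinely a Herglotz wave in $\Om\setminus\Gammabar$, rather than merely some solution of the interior problem.
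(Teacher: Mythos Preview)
The paper's proof is precisely the reciprocity argument you mention parenthetically: from the relations $u_s^\infty(\theta_s,\theta_i)=u_s^\infty(-\theta_i,-\theta_s)$ and $u_{b,s}^\infty(\theta_s,\theta_i)=u_{b,s}^\infty(-\theta_i,-\theta_s)$ one deduces that $(F^\rel)^\ast h=0$ is equivalent to $F^\rel(\overline{h(-\cdot)})=0$, so dense range reduces to injectivity of $F^\rel$, which the paper takes to be the content of the RNSE hypothesis. The whole argument occupies about five lines.

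Your preferred $(G^\rel)^\ast$ route runs into a concrete difficulty at the adjoint step. With $V$ the total field obtained by scattering $\overline{v_g}$ from the \emph{interior} cracks only, Green's identity in $\R^d\setminus(\partial\Omega\cup\Gammabar)$ still leaves a term $\int_{\Gamma\cap\Omega^c}[w]\,\partial_\nu V$ involving the unknown jump $[w]$, which cannot be rewritten as a pairing against $\psi_2$. The auxiliary field that actually yields a clean formula is the total field scattered by \emph{all} of $\Gamma$; with that choice $(G^\rel)^\ast g$ is encoded by the Dirichlet trace of this field on $\partial\Omega$ and its \emph{jump} across $\Gamma\cap\Omega^c$ (not its Neumann trace). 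Consequently $(G^\rel)^\ast g=0$ delivers only a homogeneous Dirichlet condition on $\partial\Omega$, not full Cauchy data, so the exterior-uniqueness step you outline does not go through as written. What one obtains instead is that the auxiliary total field, restricted to $\Omega\setminus\Gammabar$, solves \eqref{eq:RTEPb+AB}---exactly the endpoint the reciprocity route reaches---and the identification of the resulting eigenfunction with a Herglotz wave, the step you correctly flag as delicate, is not made easier by the adjoint calculation.
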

\begin{proof}
We show that the assumption on $k^2$ implies that $(F^{\rel})^*$ is injective. First of all, the farfield patterns of problems \eqref{PbChampTotalFreeSpaceIntrod} and \eqref{PbChampTotalFreeSpaceComp} satisfy the reciprocity relation (see e.g. \cite{CCH})
\begin{equation}\label{prop:Reciprocity}
u_s^\infty(\theta_s,\theta_i)=u_s^\infty(-\theta_s,-\theta_i)\qquad\mbox{ and }\qquad u_{b,s}^\infty(\theta_s,\theta_i)  = u_{b,s}^\infty(-\theta_i,-\theta_s).
\end{equation}
As a consequence, this is also true for $w^\infty := u_s^\infty - u_{b,s}^\infty$. By definition of $F^{\rel}$, we have that 
\[
(F^{\rel}g)(\theta_s) = \int_{\dsphere} w^\infty(\theta_s,\theta_i)g(\theta_i)\,ds(\theta_i).
\]
Now let $h\in \mL^2(\dsphere)$ be such that $(F^{\rel})^*h=0$. Then we have 
\[
\int_{\dsphere}\overline{w^\infty(\hat{x},d)}h(\hat{x})\,ds(\hat{x})=0, \qquad \forall d\in \dsphere.
\]
The reciprocity relation implies 
\[
\int_{\dsphere}w^\infty(-d,-\hat{x})\overline{h(\hat{x})}\,ds(\hat{x})=0, \qquad \forall d\in \dsphere
\]
and the change of variables $\theta_s=-d$, $\theta_i=-\hat{x}$ leads to 
\[
\int_{\dsphere}w^\infty(\theta_s,\theta_i)\overline{h(-\theta_i)}\,ds(\theta_i)=0, \qquad \forall \theta_s\in \dsphere.
\]
Since $F^{\rel}$ is injective because of the assumption on $k^2$, we conclude that $h=0$ as desired.
\end{proof}

 \bibliographystyle{plain}
\bibliography{biblio}
 
 \end{document}